\RequirePackage[2020-02-02]{latexrelease}
\documentclass[nodoi]{arxivpaper}
\providecommand\DeclareCommandCopy[2]{\let#1#2}

\usepackage{amsmath,amssymb,amsthm}
\usepackage{mathtools} 
\usepackage{comment}
\usepackage{enumerate}
\usepackage{tikz}
\usepackage{float}
\usetikzlibrary{arrows.meta, positioning, shapes.misc}
\usepackage{graphicx} 
\ifpdf
\usepackage{epstopdf}
\usepackage{url}
\fi
\providecommand{\href}[2]{#2}

\usetikzlibrary{calc}
\usetikzlibrary{arrows.meta}

\makeatletter
\let\ltx@label\label
\makeatother

\long\def\ignore#1{}

\newcommand{\ZRC}{Z_{\mathrm{RC}}}
\newcommand{\THG}{T_{\mathrm{HG}}}

\newcommand{\ZP}{Z_{\mathrm{P}}}
\newcommand{\ZGr}{Z_{\mathrm{Grimmett}}}

\makeatletter

\def\tbl#1#2#3{\global\setbox\tabcapbox\vbox{#1}\addtocounter{table}{-1}\global\setbox\tabbox\hbox{#2}\global\setbox\tabnotebox\vbox{#3}\noindent\vbox{\tablemove\textwidth\advance\tablemove-\wd\tabbox\divide\tablemove2\vbox{\hsize\wd\tabbox#1#2#3}}}
\makeatother

\begin{document}
\label{firstpage}

\lefttitle{Berrekkal, Ellis-Monaghan and Moody}
\righttitle{A Hypergraph Tutte Polynomial}
\jnlPage{\pageref{firstpage}}{\pageref{lastpage}}
\jnlDoiYr{2026}
\doival{}

\title{A Hypergraph Tutte Polynomial}

\renewcommand{\email}[1]{#1}

\begin{authgrp}
\centering
Khallil Berrekkal\textsuperscript{1}\quad Joanna A. Ellis-Monaghan\textsuperscript{2}\quad Merijn Moody\textsuperscript{3}\par
\end{authgrp}

\begin{abstract}
  We introduce a Tutte polynomial for hypergraphs,  $\THG$, together with $T_k$, a related Tutte polynomial for $k$-polymatroids. 
  Both invariants admit deletion--contraction recursions that remain within their respective classes, and they are linked by the fact that $\THG$ specializes to $T_k$ on the associated polymatroid of any $(k+1)$-uniform hypergraph. 
  We show that $\THG$ satisfies several desirable Tutte type properties, including multiplicativity and duality, while $T_k$ further admits a universality theorem, as well as a convolution product formula. 
  In the uniform hypergraph setting, these latter results specialize back to $\THG$. 
  We also relate $\THG$ to hypergraph extensions of the Potts and random cluster models. In particular, we study degree dependent random cluster and Potts partition functions, as well as Grimmett's many body Potts model, and compare their relationship with $\THG$ in both the general and uniform settings.
  Finally, we compare $\THG$ with the polymatroid Tutte polynomial $\mathcal{T}_P$ of Bernardi, K\'alm\'an, and Postnikov, showing that the two are incomparable in distinguishing power. As a consequence, we answer negatively a question raised by these authors by proving that the characteristic polynomial is not, in general, a specialization of $\mathcal{T}_P$.
\end{abstract}

\mscxx
\begin{msc}
{05C31; 05B35; 05C65; 82B20}
\end{msc}

\begin{keywords}
Hypergraph Tutte polynomial; polymatroids; deletion--contraction; hypergraph Potts model; random cluster model.
\end{keywords}

\maketitle
\renewcommand{\thefootnote}{\arabic{footnote}}
\footnotetext[1]{Korteweg-de Vries Institute for Mathematics, University of Amsterdam, Science Park 105-107, 1098 XG Amsterdam, The Netherlands. Email: k.berrekkal@uva.nl. Supported by the Netherlands Organisation for Scientific Research (NWO) under grant VI.Vidi.193.068.}
\footnotetext[2]{Korteweg-de Vries Institute for Mathematics, University of Amsterdam, Science Park 105-107, 1098 XG Amsterdam, The Netherlands. Email: jellismonaghan@gmail.com.}
\footnotetext[3]{Korteweg-de Vries Institute for Mathematics, University of Amsterdam, 1090 GE Amsterdam, The Netherlands; Institute of Physics, University of Amsterdam, 1090 GE Amsterdam, The Netherlands; Dutch Institute for Emergent Phenomena, 1090 GL Amsterdam, The Netherlands. Email: merijnmoody@gmail.com. Supported by the Dutch Institute for Emergent Phenomena (DIEP) cluster at the University of Amsterdam via the DIEP programme Foundations and Applications of Emergence (FAEME).}

\section{Introduction}\label{sec: Introduction}

In this paper, we introduce a new extension of the Tutte polynomial to hypergraphs and $k$-polymatroids, and we prove that many familiar Tutte properties extend to this setting, including a deletion--contraction recursion.
While there are several well-developed Tutte polynomial extensions to polymatroids, see for instance \cite{bernardi2022universal, chavez2021new}, for hypergraphs there does not yet seem to be a Tutte polynomial generalization that admits a deletion--contraction recursion while remaining within the class of hypergraphs.  Related investigations are already underway however, e.g. \cite{Moody2026}.

One of the significant challenges in extending the Tutte polynomial to hypergraphs is determining and choosing appropriate definitions for deletion, contraction, rank, and other parameters analogous to those for graphs.
Each such choice can give rise to different generalizations of the Tutte polynomial, with different advantages and disadvantages.
This phenomenon mirrors what arose in extending the Tutte polynomial to topological settings such as embedded and ribbon graphs, where several different generalizations were developed over the years, for example in work of Las Vergnas \cite{las1978activites, las1980tutte}, Bollob\'as and Riordan \cite{bollobas2001polynomial, bollobas2002polynomial}, Krushkal \cite{krushkal2011graphs}, Butler \cite{butler2018quasi}, and Goodall et al. \cite{goodall2018tutte}.
These were eventually resolved with a unifying theory provided by Moffatt and Huggett in \cite{huggett2020types} (see also~\cite{krajewski2018hopf}).
For hypergraphs, the theory seems still to be at the stage of discovering the possibilities.
The present work contributes to this with a new extension to hypergraphs and polymatroids based on a set of natural choices for hypergraph and polymatroid parameters, and a detailed analysis of the comparability of existing models.
The polynomial $\THG$ has the advantage that it satisfies several desirable Tutte-type properties directly for hypergraphs, including multiplicativity and duality.
Moreover, $\THG$ is obtained in the uniform hypergraph setting from the more general polynomial $T_k$, for which we prove a universality theorem and a convolution product formula.
The present work thus forwards the ultimate goal of a unifying theory of Tutte polynomials for hypergraphs.

We recall the definition of the Tutte polynomial for a graph $G=(V,E)$:
\[
T(G;x,y)= \sum_{A \subseteq E}(x-1)^{\kappa(A)-\kappa(G)}(y-1)^{\kappa(A)+|A|-|V|},
\]
where $\kappa(A)$ denotes the number of components of the spanning subgraph induced by $A \subseteq E$.
As a graph invariant, the Tutte polynomial admits many specializations with clean combinatorial interpretations.
It also satisfies a deletion--contraction recursion, a convolution formula, and  a duality formula for planar graphs.
Moreover, the Tutte polynomial has a universality property, in the sense that every multiplicative graph parameter satisfying a suitable deletion--contraction recursion is, up to a change of variables, obtained by evaluating the Tutte polynomial.

\paragraph{Beyond graphs.}
The Tutte polynomial is also a matroid invariant, and this perspective is important for our later extension to polymatroids.
Indeed, every graph $G$ gives rise to a matroid on $E(G)$, its graphic matroid, and $T(G;x,y)$ depends only on this underlying matroid structure.
In this broader setting we write $T(M;x,y)$ for an arbitrary matroid $M$, and the familiar deletion--contraction recursion is then simply the matroid deletion--contraction.

This paper considers two settings, namely hypergraphs and $k$-polymatroids, both of which arise naturally as generalizations of this classical picture.
Polymatroids form a natural extension of matroids.
While a graph already comes with a matroid rank function on its edge set, the same approach applied to hypergraphs produces a polymatroid rather than a matroid.
Polymatroids still support the same basic minor operations as matroids, so we can apply deletion and contraction in a familiar way~\cite{oxley1993characterization, whittle1992duality}.
Polymatroids appear in several areas, including combinatorial optimization and submodular theory, and they provide a natural ambient setting to extend the Tutte polynomial and minor operations beyond matroids~\cite{fujishige2005submodular, Savitsky14, schrijver2003combinatorial}.

Hypergraphs, on the other hand, generalize graphs in a different direction.
From a combinatorial viewpoint, a graph only records pair interactions, while a hypergraph allows the hyperedges to record higher order interactions.
This added flexibility makes hypergraphs useful whenever we want to model higher-order structure rather than reduce everything to pairwise relations.
For this reason, hypergraphs appear naturally in several mathematical and applied settings and in particular in models where interactions need not be pairwise~\cite{battiston2021physics, robiglio2025higher, son2024phase}; see also~\cite{spinmod, Mastromatteo2013} for earlier high-order spin-model perspectives.

With these two generalizations in mind, we now describe the extensions introduced in this paper.

\paragraph{A hypergraph Tutte polynomial.}
In Section~\ref{sec: hypertutte}, we define a Tutte polynomial $\THG$ for hypergraphs.
This polynomial takes inspiration from the coarse Tutte polynomial for hypermaps introduced in \cite{ellis2024coarse}.
The coarse hypermap Tutte polynomial is an invariant of hypergraph embeddings on surfaces, and our $\THG$ may be viewed as a non-topological analogue of it.
An important feature carried over from \cite{ellis2024coarse} is that we record edge degrees, allowing hyperedges to contain the same vertex multiple times, i.e., with multiplicity.
This generalizes loops from graphs to hypergraphs and, crucially, makes the deletion--contraction recursion for $\THG$ tractable, with the degree data built into the recursion itself.
We make this precise in Section~\ref{sec: hypertutte}, where we introduce our hypergraph minor operations and prove the deletion--contraction recurrence for $\THG$ on general hypergraphs.

Section~\ref{sec: polymatroids} develops the $k$-polymatroid Tutte polynomial $T_k$.
We prove deletion--contraction, multiplicativity, duality, specializations to the characteristic polynomial, a universality theorem, and a convolution product formula.
Section~\ref{sec: Hypergraphs} then returns to hypergraphs.
We describe the associated polymatroid construction and prove that our hypergraph minors correspond to polymatroid minors.
We also specialize to $(k+1)$-uniform hypergraphs and derive the corresponding universality theorem and convolution identities in the hypergraph setting, as well as the planar duality statement inherited from \cite{ellis2024coarse}.

Since the associated polymatroids do not capture hyperedge degrees, $\THG$ does not extend to polymatroids in general.
However, within our framework we encode $(k+1)$-uniform hypergraphs as $k$-polymatroids, which yields a natural extension of $\THG$ on this restricted class.
We denote this specialized extension by $T_k$.
Thus, besides the hypergraph polynomial $\THG$, our construction also yields the extension $T_k$ on a restricted polymatroid class that still retains the relevant hypergraph structure.
This also reflects the fact that the hypergraph and polymatroid viewpoints are closely related, but not identical, in our setting.

\paragraph{Potts model partition functions.}
A further motivation for generalizing the Tutte polynomial to hypergraphs comes from statistical mechanics, where the classical Tutte polynomial plays a well-known organizing role through its specializations to partition functions such as the Potts model and the random cluster model.
The $q$-state Potts model partition function of a graph $G$ is the normalization factor for the Boltzmann probability distribution, and can be written as
$Z(G)=\sum_{\sigma}\exp(-\beta\,h(\sigma))$,
where the sum ranges over all spin states $\sigma$ on a graph $G$ with $q$ possible spins.
Here $\beta$ is the inverse temperature, and the Hamiltonian $h$ measures the energy of the state; its precise form depends on the application.
In the absence of an external magnetic field, and with constant interaction energies between adjacent vertices, the Hamiltonian becomes
\[
h(\sigma)=-J\sum_{\{i,j\}\in E(G)}\delta(\sigma_i,\sigma_j),
\]
where $\delta$ denotes the Kronecker delta.
In this case, the classical Tutte polynomial evaluates to the Potts partition function:
\[
Z(G;q,w)  =q^{\kappa(G)}(w-1)^{|V(G)|-\kappa(G)}T\left(G; \frac{q+w-1}{w-1}, w \right),
\]
where $w=e^{J\beta}$.
(See \cite{FK72} for the nascent stages of this theory and later exposition in \cite{Bax82, BE-MPS10, Big93, Bol98, Loe10, Tut84, Wel93, WM00}.)
This relationship has resulted in a remarkable synergy between combinatorics and statistical mechanics, particularly in computational complexity and the study of zeros of these polynomials; see \cite{BE-MPS10, GJ07, Roy09, Sok05, WM00} for surveys on these results.

Closely related to the Potts model is the random cluster model of Fortuin and Kasteleyn, which rewrites the partition function as a sum over edge subsets rather than spin configurations.
In the graph setting, the Tutte polynomial again encodes this partition function~\cite{FK72, grimmett2004random}.

For the purposes of this paper, the key point is that interactions in statistical physics need not be pairwise.
Several models feature many-body couplings, where an energy term depends on the joint state of three or more spins at once~\cite{battiston2021physics, robiglio2025higher, son2024phase};  see also~\cite{Wegner1971, Mastromatteo2013, spinmod} for earlier high-order Ising and spin-model work.
For a complementary framework connecting such higher-order statistical physics models with hypergraph theory, see~\cite{Moody2026}.

From this perspective, hypergraphs are not merely a formal generalization of graphs, but a natural combinatorial framework for higher-order statistical mechanical models.
In particular, Geoffrey Grimmett gives a hypergraph formulation of both the Potts model and the random cluster model \cite{grimmett1994potts}.

This statistical-mechanical perspective provides a direct motivation for Tutte polynomial-like invariants of hypergraphs: we seek a polynomial that plays an analogous organizing role to the graph Tutte polynomial, while retaining a robust deletion--contraction theory and interfacing naturally with hypergraph Potts and random cluster partition functions.
The polynomials $\THG$ and $T_k$ are introduced in this spirit.

In Section~\ref{sec: potts}, we develop this connection with hypergraph extensions of the Potts and random cluster models.
We show that $\THG$ is equivalent to a degree-dependent hypergraph random cluster partition function $\ZRC$, via explicit change-of-variable formulas.
Thus $\THG$ plays, in this hypergraph setting, the same organizing role for $\ZRC$ that the classical Tutte polynomial plays for the graph random cluster model.
We also compare $\ZRC$ with Grimmett's many-body Potts model and a degree-dependent hypergraph Potts partition function, showing that these three hypergraph partition functions are genuinely distinct in general, and that among them only $\ZRC$ is equivalent to $\THG$.
In the uniform setting, however, they become equivalent.

\paragraph{Comparison with existing polymatroid Tutte polynomials.}
Besides relating $\THG$ to statistical mechanical partition functions, we also compare it with the polymatroid Tutte polynomial $\mathcal{T}_P$ of Bernardi, Kálmán, and Postnikov \cite{bernardi2022universal}.
This comparison is particularly natural in the uniform setting, where $(k+1)$-uniform hypergraphs give rise to associated $k$-polymatroids and $\THG$ specializes to $T_k$.
Nevertheless, in Section~\ref{sec: comparison} we show that $\THG$ and $\mathcal{T}_P$ are incomparable in distinguishing power, even on uniform examples.
As a consequence, in Corollary~\ref{cor:char-not-specialization}, we answer negatively a question raised by Bernardi, Kálmán, and Postnikov in Section 18.3 of \cite{bernardi2022universal} by proving that the characteristic polynomial cannot, in general, be obtained as a specialization of $\mathcal{T}_P$.
We conclude in Section~\ref{sec: conclusion} with further directions and open problems.

\section{The Hypergraph Tutte Polynomial $T_{\mathrm{HG}}$}\label{sec: hypertutte}
In this section we introduce our hypergraph conventions, the corresponding minor operations, and the hypergraph Tutte polynomial $\THG$.
We then prove the deletion--contraction recurrence for $\THG$ on general hypergraphs and record its basic properties.

\begin{definition}[Hypergraph]\label{def: Hypergraph}
A \emph{hypergraph} is a pair $H=(V,E)$, where $V$ is a finite set, whose elements are called \emph{hypervertices}, and $E$ is a multi-collection of multi-subsets of $V$, whose elements are called \emph{hyperedges}.
\end{definition}
Multi-edges are common in the literature; here we additionally treat $E$ itself as a multiset of multisets, analogous to allowing loops in graphs, meaning that the same vertex may appear multiple times within the same edge.
We write $v(H)=|V|$,  and $e(H)=|E|$ denotes the number of hyperedges with multiplicity, i.e. counting parallel hyperedges. 
For $e\in E$, let $d(e)$ be its size counting multiplicity, and $v(e)$ be the number of \emph{distinct} vertices in $e$.
We say that $H$ is \emph{$k$-uniform} if $d(e)=k$ for all $e \in E$.
In this paper, we always assume that $d(e)>1$ for any hyperedge $e$.

Given a hypergraph $H=(V,E)$, the \emph{associated bipartite incidence graph} $\mathrm{Bip}(H)$ is the bipartite graph whose vertex classes are $V$ and $E$. 
For $v\in V$ and $e\in E$, the vertices $v$ and $e$ are connected by exactly $m_e(v)$ parallel edges, where $m_e(v)$ denotes the multiplicity of $v$ in the hyperedge $e$.
The number of connected components of $H$ is denoted by $\kappa(H)$ and is defined by $\kappa(H)\coloneqq \kappa(\mathrm{Bip}(H))$.
Moreover, we call $H$ \emph{planar} if and only if $\mathrm{Bip}(H)$ is planar.

For $e\in E$, we define
\[
\Delta(e)\coloneqq \kappa(H\setminus e)-\kappa(H),
\qquad
m(e)\coloneqq d(e)-v(e).
\]
Given a subset of hyperedges $A\subset E$, we denote the \emph{subhypergraph} as $H|_A=(V,A)$.
For a subset $A\subseteq E$, we write  $d(A)=\sum_{e\in A} d(e)$, and $\kappa(A)$  for $\kappa(H|_A)$.

We now introduce an extension of the Tutte polynomial to hypergraphs that we will denote as $\THG$, together with its deletion and contraction property.

\begin{definition}[Hypergraph Tutte polynomial]\label{def: HyperTutte}
For a hypergraph $H$, we define its Tutte polynomial by
    \[
    T_{\mathrm{HG}}(H ; x,y) =  \sum_{A \subseteq E(H)} (x-1)^{\kappa(A)- \kappa(H) } (y-1)^{d(A) - |A| - v(H)+\kappa(A)}.
    \]
\end{definition}
This extension is inspired by the coarse Tutte polynomial for hypermaps, as introduced in  \cite{ellis2024coarse}, in the sense that our hypergraph Tutte polynomial can be seen as a non-topological analogue of the coarse Tutte polynomial for hypermaps. 
The main difference between the classical Tutte polynomial and the hypergraph Tutte polynomial is that the latter incorporates the degrees of the edges.
For hypergraphs, the quantity $\kappa(A) +|A| -v(H)$  may be negative, whereas $d(A) - |A| - v(H)+\kappa(A) \ge 0$, ensuring that $\THG$ is in fact a polynomial.
For graphs, $d(A)= 2|A|$, and therefore $\THG$ agrees with the Tutte polynomial on graphs.
A key property of the classical Tutte polynomial is its deletion-contraction recursion. 
We define deletion and contraction operations of hyperedges for  hypergraphs, generalizing the classical graph-theoretic operations.
This requires extra care due to the presence of multi-edges and multiset edge structures in hypergraphs.

\begin{definition}[Hypergraph deletion and contraction]  \label{def: del/con}
Let $H = (V, E)$ be a hypergraph, and let $e \in E$ be a hyperedge. 

\begin{enumerate}
    \item
The \emph{deletion} of $e$ from $H$, denoted $H \setminus e$, is the hypergraph
\[
H \setminus e := (V, E \setminus \{e\}),
\]
with the vertex set $V$ unchanged.

    \item
The \emph{contraction} $H/e$ is the hypergraph $(V',E')$ defined by
\[
V' \;=\;(V\setminus e)\;\cup\;\{v_e\},
\qquad
E' \;=\;\bigl\{\,f':\,f\in E\setminus\{e\}\bigr\},
\]
where each $f'$ is the multiset obtained from $f$ by replacing every occurrence of a vertex in $e$ with the new vertex $v_e\,$.  
In other words, if $f \neq e$ contains $k$ elements of $e$ (with multiplicity), then 
\[
f'=\bigl(f\setminus e\bigr)\;\cup\;\bigl\{\underbrace{v_e,\dots,v_e}_{k\text{ times}}\bigr\}.
\]
\end{enumerate}

\end{definition}
Informally, contracting $e$ is obtained by deleting $e$, identifying all vertices of $e$ into a single vertex, and updating the incidences of the remaining edges so that their degrees are preserved.
To the best of our knowledge, this formal version of hyperedge contraction preserving edge degrees has not previously appeared in the literature.
Another definition of hyperedge contraction appears in \cite{steiner2022coloring}, but there the degrees are not preserved, and parallel edges are not retained.

This degree preservation enables us to establish a full deletion--contraction recurrence for $\THG$.
In particular, uniformity is preserved as well, which allows us to study properties of $\THG$ within the class of $k$-uniform hypergraphs that do not necessarily hold for general hypergraphs.

We will now list some properties of $\THG$, starting with the deletion--contraction recurrence.

\begin{theorem}
\label{thm:dc-THG}
Let $H=(V,E)$ be a hypergraph and let $e\in E$, where deletion and contraction are as in Definition~\ref{def: del/con}.
Then
\[
      T_{\mathrm{HG}}(H ; x,y) = (x-1)^{
      \Delta(e)
      }T_{\mathrm{HG}}(H\setminus{e};x,y) + (y-1)^{m(e)}T_{\mathrm{HG}}(H/e;x,y).
\]
\end{theorem}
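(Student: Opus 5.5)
The plan is to split the state sum of Definition~\ref{def: HyperTutte} according to whether $e\in A$. Subsets $A\subseteq E\setminus\{e\}$ will be matched with the terms of $\THG(H\setminus e)$, and subsets $A=B\cup\{e\}$ with $B\subseteq E\setminus\{e\}$ will be matched with the terms of $\THG(H/e)$, where $B$ is read as the corresponding set $B'$ of edges of $H/e$. In each case I will compare the two exponents term by term. No induction is needed; the identity holds directly at the level of the expansions.

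\emph{Terms with $e\notin A$.} The hypergraph $H\setminus e$ has the same vertex set as $H$. For $A\subseteq E\setminus\{e\}$, the values $\kappa(A)$ and $d(A)$ are therefore the same whether $A$ is viewed in $H$ or in $H\setminus e$. The $(y-1)$-exponents agree exactly. The $(x-1)$-exponents differ by $\kappa(H\setminus e)-\kappa(H)=\Delta(e)$. Hence these terms sum to $(x-1)^{\Delta(e)}\THG(H\setminus e;x,y)$.

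\emph{Terms with $e\in A$.} The key combinatorial fact I will establish is
\[
\kappa_H(B\cup\{e\})=\kappa_{H/e}(B') \quad\text{for all } B\subseteq E\setminus\{e\},
\]
and in particular $\kappa(H)=\kappa(H/e)$. To see this, note that in $\mathrm{Bip}(H|_{B\cup e})$ all vertices of $e$ lie in a single component together with the node $e$. Identifying those vertices into $v_e$ and removing the node $e$ gives exactly $\mathrm{Bip}((H/e)|_{B'})$, since each $f'$ keeps its incidences with $v_e$ through the replaced occurrences. This operation preserves the number of components.

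For the degree and count data, contraction preserves the degrees of the remaining edges, so $d(B')=d(B\cup e)-d(e)$. We also have $|B'|=|B\cup e|-1$ and $v(H/e)=v(H)-v(e)+1$. Substituting these into the $(y-1)$-exponent gives
\[
d(B\cup e)-|B\cup e|-v(H)+\kappa(B\cup e)
=\bigl[d(B')-|B'|-v(H/e)+\kappa(B')\bigr]+d(e)-v(e).
\]
The bracketed term is the exponent for $B'$ in $H/e$, and the extra $d(e)-v(e)$ equals $m(e)$. The $(x-1)$-exponents agree exactly. So these terms sum to $(y-1)^{m(e)}\THG(H/e;x,y)$.

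The main obstacle is the component identity, and in particular checking its edge cases. One case is when $e$ shares vertices with edges of $B$ with multiplicity. Another is isolated vertices: vertices of $V\setminus e$ untouched by $B$ remain isolated components in both hypergraphs, while all vertices of $e$ collapse into the single vertex $v_e$. This correspondence of components needs to be stated carefully, but it should go through by the bipartite-graph argument above.
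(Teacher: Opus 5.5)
Your proposal is correct and follows essentially the same route as the paper. Both split the state sum according to whether $e\in A$, factor $(x-1)^{\Delta(e)}$ out of the terms with $e\notin A$, and use the bijection $A\mapsto A\setminus\{e\}$ together with $\kappa_H(A)=\kappa_{H/e}(A\setminus\{e\})$, $\kappa(H)=\kappa(H/e)$, $d(A)=d(A\setminus\{e\})+d(e)$ and $v(H/e)=v(H)-v(e)+1$ to extract the factor $(y-1)^{m(e)}$. Your incidence-graph argument for the component identity is a small improvement, since the paper simply asserts that identity as following from the definition of contraction.
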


\begin{proof}
We split the subset expansion in Definition~\ref{def: HyperTutte} obtaining 
\begin{align*}
T_{\mathrm{HG}}(H;x,y)
=&\sum_{\substack{A\subseteq E\\ e\notin A}}
(x-1)^{\,\kappa(A)-\kappa(H)}\,
(y-1)^{\,d(A)-|A|-v(H)+\kappa(A)}
\\ &+\sum_{\substack{A\subseteq E\\ e\in A}}
(x-1)^{\,\kappa(A)-\kappa(H)}\,
(y-1)^{\,d(A)-|A|-v(H)+\kappa(A)}.
\end{align*}

The first sum is simply $\sum_{A \subseteq E(H\setminus{e})}
(x-1)^{\,\kappa(A)-\kappa(H)}\,
(y-1)^{\,d(A)-|A|-v(H)+\kappa(A)}$.
The number of vertices in $H\setminus{e}$ remains unchanged, and we write $\kappa(H)=\kappa(H) -\kappa(H\setminus{e}) + \kappa(H\setminus{e})$, thus the first term becomes
\[
(x-1)^{\kappa(H\setminus{e})  - \kappa(H)} \sum_{A \subseteq E(H\setminus{e})}
(x-1)^{\,\kappa(A)-\kappa(H\setminus{e})}\,
(y-1)^{\,d(A)-|A|-v(H\setminus{e})+\kappa(A)}.
\]

For the second sum, we set $A'=A\setminus\{e\}$, giving a bijection
$\{A\subseteq E: e\in A\}\leftrightarrow \{A'\subseteq E\setminus\{e\}\}$.
From the definition of contraction, it follows that $\kappa\left((H/e)\big|_{A'}\right)=\kappa\left(H\big|_{A}\right)$, $\kappa(H/e)=\kappa(H)$, $d(A)=d(A')+d(e)$, $|A|=|A'|+1$, and $v(H/e)=v(H)-v(e)+1$.
Thus, the second term can be rewritten as
\begin{align*}
    &\sum_{\substack{A\subseteq E\\ e\in A}}
(x-1)^{\,\kappa(A)-\kappa(H)}\,
(y-1)^{\,d(A)-|A|-v(H)+\kappa(A)}\\
=& \sum_{\substack{A\subseteq E\\ e\in A}}
(x-1)^{\,\kappa(A')-\kappa(H/e)}\,
(y-1)^{\,d(A')+d(e)-|A'|-1-v(H/e)-v(e)+1+\kappa(A')}\\
=&  (y-1)^{d(e)-v(e)}\sum_{\substack{A'\subseteq E(H/e)}}
(x-1)^{\,\kappa(A')-\kappa(H/e)}\,
(y-1)^{\,d(A')-|A'|-v(H/e)+\kappa(A')}\\
=&(y-1)^{\,m(e)}\,T_{\mathrm{HG}}(H/e;x,y),
\end{align*}
where $m(e)=d(e)-v(e)$.

After noticing that $\kappa(H \setminus{e}) - \kappa(H)=\Delta(e)$, combining the two parts gives
\[
T_{\mathrm{HG}}(H;x,y)
=(x-1)^{\,\Delta(e)}\,T_{\mathrm{HG}}(H\setminus e;x,y)
\;+\;
(y-1)^{\,m(e)}\,T_{\mathrm{HG}}(H/e;x,y),
\]
as claimed.
\end{proof}

We note that this deletion--contraction formula generalizes the graph case.
For graphs, both $\Delta(e)$ and $m(e)$ take values in $\{0,1\}$, and they cannot simultaneously be equal to $1$ for the same edge.

\begin{proposition}
    Let $H_1,H_2$ be vertex- and edge-disjoint hypergraphs.   The hypergraph Tutte polynomial $\THG$ satisfies the following properties.

    \begin{enumerate}[(i)]
    \item  
    The hypergraph Tutte polynomial $\THG$  is multiplicative over disjoint unions, i.e.
    \[
        \THG(H_1 \cup H_2)=\THG(H_1)\,\THG(H_2).
    \]

    \item 
    
    Let $H_1 * H_2$ be the vertex join of $H_1$ and $H_2$ obtained by identifying a vertex $v_1\in V(H_1)$ with a vertex $v_2\in V(H_2)$ into a single vertex $v$ (and leaving all edges otherwise unchanged). Then
    \[
        \THG(H_1 * H_2)=\THG(H_1)\,\THG(H_2).
    \]
\end{enumerate}
\end{proposition}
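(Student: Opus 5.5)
Both parts follow directly from the subset expansion in Definition~\ref{def: HyperTutte}. We show that the exponents of $(x-1)$ and $(y-1)$ are additive under the natural bijection between edge subsets, so the sum factors. We do not use Theorem~\ref{thm:dc-THG}.

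For (i), write $H=H_1\cup H_2$ with $E(H)=E(H_1)\sqcup E(H_2)$. Every $A\subseteq E(H)$ splits uniquely as $A=A_1\sqcup A_2$ with $A_i\subseteq E(H_i)$. Since $H_1$ and $H_2$ are vertex-disjoint, $\mathrm{Bip}(H|_A)$ is the disjoint union of $\mathrm{Bip}(H_1|_{A_1})$ and $\mathrm{Bip}(H_2|_{A_2})$. Hence $\kappa(A)=\kappa(A_1)+\kappa(A_2)$, and likewise $\kappa(H)=\kappa(H_1)+\kappa(H_2)$. The remaining quantities are clearly additive: $d(A)=d(A_1)+d(A_2)$, $|A|=|A_1|+|A_2|$ and $v(H)=v(H_1)+v(H_2)$. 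Therefore both exponents in the summand for $A$ equal the sum of the corresponding exponents for $A_1$ in $H_1$ and $A_2$ in $H_2$. The summand thus factors as a product, and summing over all pairs $(A_1,A_2)$ gives $\THG(H_1)\THG(H_2)$.

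For (ii), let $H=H_1*H_2$. The edge set is again $E(H_1)\sqcup E(H_2)$, now with the vertex set $V(H)=(V(H_1)\sqcup V(H_2))/(v_1\sim v_2)$. Degrees are unchanged, since each multiplicity $m_e(v_i)$ becomes $m_e(v)$. So $d(A)$ and $|A|$ are still additive, and $v(H)=v(H_1)+v(H_2)-1$. The key step is the component count. For $A=A_1\sqcup A_2$, the graph $\mathrm{Bip}(H|_A)$ is obtained from the disjoint union of $\mathrm{Bip}(H_1|_{A_1})$ and $\mathrm{Bip}(H_2|_{A_2})$ by identifying the two vertices $v_1$ and $v_2$. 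Identifying one vertex of one component with one vertex of another merges exactly those two components into one. Hence $\kappa(A)=\kappa(A_1)+\kappa(A_2)-1$ for every $A$, and in particular $\kappa(H)=\kappa(H_1)+\kappa(H_2)-1$.

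Substituting these identities into the two exponents:
\begin{itemize}
\item In $\kappa(A)-\kappa(H)$ the two $-1$'s cancel, so the exponent of $(x-1)$ is additive.
\item In $d(A)-|A|-v(H)+\kappa(A)$ we get $+1$ from $-v(H)$ and $-1$ from $\kappa(A)$, which cancel, so the exponent of $(y-1)$ is additive.
\end{itemize}
The factorization then proceeds exactly as in (i).

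The only point needing care is the claim $\kappa(A)=\kappa(A_1)+\kappa(A_2)-1$. It holds uniformly, even when $v_1$ is isolated in $H_1|_{A_1}$, because isolated vertices still count as components of $\mathrm{Bip}$. With that checked, the rest is bookkeeping.
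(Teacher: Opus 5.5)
Your proposal is correct and follows the paper's proof essentially verbatim: split $A=A_1\sqcup A_2$, use $\kappa(A_1\cup A_2)=\kappa_{H_1}(A_1)+\kappa_{H_2}(A_2)-1$ and $v(H)=v(H_1)+v(H_2)-1$ so that both exponents become additive, and factor the double sum. Your treatment of (i), which the paper omits as standard, is also correct.
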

\begin{proof}
    The proof of (i) is very standard, so we spare the reader the details.
    For part (ii), let $H= H_1 * H_2$. 
    We denote by $E_1$ and $E_2$  the edge sets in $H$  coming from $H_1$ and $H_2$ respectively, with $E_1 \cap E_2 = \emptyset$.
    We have
    \begin{align*}
        \THG(H;x,y) &= \sum_{A \subseteq E(H)}(x-1)^{\kappa_H(A)-\kappa_H(H)}(y-1)^{ d(A)-|A| - v(H)+\kappa_H(A)}\\
        &=\sum_{A_1 \subseteq E_1}\sum_{A_2 \subseteq E_2} (x-1)^{\kappa_H(A_1 \cup A_2)-\kappa_H(H)}(y-1)^{ d(A_1 \cup A_2)-|A_1 \cup A_2| - v(H)+\kappa_H(A_1 \cup A_2)}.
    \end{align*}

    For $A_1,A_2$ it holds that $\kappa_H(A_1 \cup A_2)= \kappa_{H_1}(A_1) + \kappa_{H_2}(A_2)-1$, where the $-1$ comes from the fact that the component of $v_1$ in $(V(H_1),A_1)$  and the component of $v_2$ in $(V(H_2),A_2)$ get merged into one component in $(V(H),A_1 \cup A_2)$.
    Similarly $\kappa_H(H)= \kappa_{H_1}(H_1) + \kappa_{H_2}(H_2)-1$.
    We also observe that $v(H) = v(H_1) + v(H_2) -1$, where the $-1$ comes from double counting $v$.  Hence

    \[
        \kappa_H(A_1\cup A_2)-\kappa_H(H)
        =\big(\kappa_{H_1}(A_1)-\kappa_{H_1}(H_1)\big)+\big(\kappa_{H_2}(A_2)-\kappa_{H_2}(H_2)\big),
    \]
    and
    \[
    v(H) - \kappa_H(A_1 \cup A_2) =\big( v(H_1)  - \kappa_{H_1}(A_1)\big) + \big( v(H_2)  - \kappa_{H_2}(A_2)\big).
    \]
    Moreover, $d(A_1\cup A_2)=d_{H_1}(A_1)+d_{H_2}(A_2)$ and $|A_1 \cup A_2| = |A_1| + |A_2|$.
 
    Therefore the summand factors as a product of a term depending only on $A_1$ and a term depending only on $A_2$. The double sum then splits, giving 
    \[
        \THG(H;x,y)=\THG(H_1;x,y)\,\THG(H_2;x,y),
    \]
    which proves (ii).

\end{proof}

The following result follows directly from Proposition 5 in \cite{ellis2024coarse}, as the coarse hypermap Tutte polynomial and the hypergraph Tutte polynomial $\THG$ coincide for planar hypergraphs.
\begin{proposition}
    Let $H$ be a planar hypergraph, and let $H^*$ denote its vertex-face dual as defined in \cite{ellis2024coarse}. Then
    \[
    \THG(H;x,y) = \THG(H^*;y,x).
    \]
\end{proposition}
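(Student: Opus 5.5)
The plan is to reduce the statement to Proposition~5 of \cite{ellis2024coarse} by identifying $\THG$ with the coarse hypermap Tutte polynomial on planar hypergraphs.

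\emph{Step 1: choose an embedding.} Since $H$ is planar, $\mathrm{Bip}(H)$ is planar, so we fix a plane embedding of it. Via the standard correspondence between bipartite maps and hypermaps, this gives a genus-zero hypermap $\mathcal{H}$. Its hypervertices are the vertices of $V$, and its hyperedges are the elements of $E$. The multiplicities $m_e(v)$ are recorded by the parallel edges of $\mathrm{Bip}(H)$.

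\emph{Step 2: compare the polynomials.} We recall the subset expansion of the coarse Tutte polynomial of $\mathcal{H}$ from \cite{ellis2024coarse}. It is a sum over $A\subseteq E$ whose exponents involve the number of components, the number of vertices, the degrees $d(A)$, and a genus (or Euler characteristic) correction for the subhypermap $\mathcal{H}|_A$. Every subhypermap of a genus-zero hypermap again has genus zero, since it corresponds to a sub-bipartite map of a plane bipartite map. So the genus terms vanish for every $A$. We then verify that the remaining exponents coincide termwise with $\kappa(A)-\kappa(H)$ and $d(A)-|A|-v(H)+\kappa(A)$ from Definition~\ref{def: HyperTutte}. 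This shows that the coarse polynomial of $\mathcal{H}$ equals $\THG(H;x,y)$.

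A by-product of Step 2 is that the value does not depend on the chosen embedding. We also note that the dual $\mathcal{H}^*$ is again a genus-zero hypermap, so its underlying hypergraph $H^*$ is planar and the identification applies to it as well.

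\emph{Step 3: apply duality.} Proposition~5 of \cite{ellis2024coarse} says that the coarse polynomial of $\mathcal{H}$ at $(x,y)$ equals that of $\mathcal{H}^*$ at $(y,x)$. Combining this with Step~2 for both $H$ and $H^*$ gives $\THG(H;x,y)=\THG(H^*;y,x)$.

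The main obstacle is Step~2: the normalization conventions of \cite{ellis2024coarse} must match ours exactly. Two points need care. First, degrees with multiplicity must correspond to the hypermap incidence structure. Second, the exponent there may be written using faces, or using $|A|$ in a different combination. If it is written with faces, I would first use Euler's formula on each component of the planar sub-bipartite map to eliminate the face count, which converts it into the $d(A)-|A|-v(H)+\kappa(A)$ form.

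If the conventions turn out to be incompatible, the fallback is a direct proof by induction on $|E|$ using Theorem~\ref{thm:dc-THG}. For this one would check that deletion in $H$ corresponds to contraction in $H^*$ and vice versa, and that $\Delta(e)$ in $H$ matches $m(e^*)$ in $H^*$ in the planar setting.
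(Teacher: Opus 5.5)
Your proposal is correct and follows the paper's argument: the paper likewise derives the duality directly from Proposition~5 of \cite{ellis2024coarse}, using that the coarse hypermap Tutte polynomial coincides with $\THG$ on planar hypergraphs. Your Step~2 only spells out this coincidence, which the paper asserts without proof, by applying Euler's formula to each planar sub-bipartite map so that the face exponent becomes $d(A)-|A|-v(H)+\kappa(A)$.
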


We have observed that several fundamental properties of the Tutte polynomial still hold for the hypergraph Tutte polynomial $\THG$ of arbitrary hypergraphs (e.g. deletion--contraction, multiplicativity and duality).
Restricting to 
$(k+1)$-uniform hypergraphs allows us to extend this list of properties, as we will see in Section~\ref{sec: Hypergraphs}.
In the next section, we turn to polymatroids, which provide a broader rank function setting containing the structures arising from $(k+1)$-uniform hypergraphs.

\section{A $k$-Polymatroid Tutte Polynomial}\label{sec: polymatroids}

In this section, we first work in the rank function setting of $k$-polymatroids and introduce the polynomial $T_k$, which serves as the polymatroid analogue of $\THG$ for the uniform part of our hypergraph theory.
This provides a natural intermediate framework, since $\THG$ itself does not extend to arbitrary polymatroids (the hyperedge-degree information is lost), whereas $(k+1)$-uniform hypergraphs correspond naturally to $k$-polymatroids in our setup.
By establishing in this section the main properties for $T_k$, in particular universality, a convolution formula, and other Tutte type properties, we can then return in the next section to hypergraphs and specialize these results to the uniform hypergraph setting via the associated polymatroid construction.

\subsection{Polymatroids}

We start by introducing \emph{polymatroids}, a natural generalization of \emph{matroids}. 
The definitions in this subsection follow those in \cite{chavez2021new, oxley1993characterization} closely.
\begin{definition}
    Let $E$ be a finite set, and let $r$ be a function $r:2^E \to \mathbb{Z} $. 
We call the pair $P=(E,r)$ a \emph{polymatroid}, if
\begin{enumerate}[i.]
    \item $r(\emptyset)=0$,
    \item $r$ is increasing, i.e. $r(A) \le r(B)$ if $A \subseteq B$, and
    \item $r$ is submodular, i.e. $r(A ) + r(B) \geq r(A 
\cup B) + r(A\cap B)$ for all $A,B \subseteq E$.
\end{enumerate}
Here, $E$ is known as the \emph{groundset} and $r$ is known as the \emph{rank} function. 
If $r(\{e\}) \leq k$ for all $e\in E$, then $P$ is called a \emph{$k$-polymatroid}.
\end{definition}

If any ambiguity may arise regarding a polymatroid $P$ under consideration, we distinguish its rank function by a subscript and write $r_P$.
Furthermore, we may write $r(P)$ instead of $r(E)$, if it is clear from the context, and we may denote its groundset as $E(P)$, and write $|P|$ for $|E(P)|$.
Moreover, if $e \in E$, then we write $r(e)$ for $r(\{e\})$.

There is no universally accepted definition of polymatroids in the literature.
Some authors allow the rank function $r$ to take values in $\mathbb{R}$~\cite{bonin2023natural}, while other authors do not require $r$ to be increasing~\cite{bernardi2022universal, guan2023deletion, kalman2013version}.

Our definition of a $k$-polymatroid specifically follows \cite{bonin2023natural, chavez2021new, oxley1993characterization, whittle1992duality}.
A $1$-polymatroid in this case is also known as a \emph{matroid}.
From this point onward, we fix an integer $k \ge 1$.
Unless stated otherwise, all polymatroids in this section are $k$-polymatroids.

Polymatroids have basic deletion and contraction operations.  
Given a subset $A\subseteq E$, the deletion of $A$ from $P$ is given by $P\setminus{A}=(E \setminus{A}, r_{P\setminus{A}})$, with $r_{P\setminus{A}}(X)=r(X)$ for all $X \subseteq E\setminus{A}$, and contraction of $A$ is given by $P/A = (E \setminus{A}, r_{P/A})$, with $r_{P/A}(X) = r(X \cup A) -r(A)$ for all $X \subseteq E\setminus{A}$. 
If $P$ is a $k$-polymatroid, then both $P/A$ and $P\setminus{A}$ are again $k$-polymatroids.
The restriction $P|_A$ denotes the polymatroid obtained from deleting $E\setminus{A}$. 
A \emph{minor} of $P$ is any polymatroid of the form $(P/A)\setminus{B}$ (equivalently $(P\setminus{B})/A$) for disjoint subsets $A,B \subseteq E$.
If $e\in E$, we simply write $P\setminus{e}$ and $P/e$, shorthand for $P\setminus\{e\}$ and $P/\{e\}$ respectively.

Given a $k$-polymatroid $P=(E,r)$, its \emph{$k$-dual} is the polymatroid $P^*=(E,r^*)$, with the same groundset $E$, and  a new (dual) rank function, $r^*$, defined by $r^* (A) = k|A| +r(E \setminus{A}) -r (E)$, for $A \subseteq E$. 
It is not hard to verify that $P^{*}$ is again a $k$-polymatroid, and that the map sending $P$ to $P^*$ is an involution, i.e. $( P^*) ^* =P$ \cite{chavez2021new, whittle1992duality}. 
Moreover, the dual interchanges deletion and contraction, i.e.  $P^*/e = (P\setminus{e})^*$ and, equivalently, $P^*\setminus{e} = (P/e)^*$.
Since our focus is $k$-polymatroids, we will simply refer to the $k$-dual of a $k$-polymatroid as its \emph{dual} if there is no ambiguity. 

Finally, given two polymatroids $P_1=(E_1,r_1)$ and $P_2=(E_2,r_2)$, with $E_1 \cap E_2 = \emptyset$, their \emph{direct sum} is $P_1 \oplus P_2 = (E_1 \cup E_2 , r_1 \oplus r_2)$, where $r_1 \oplus r_2(X) = r_1(X \cap E_1) + r_2(X \cap E_2)$, for $X \subseteq E_1 \cup E_2$.

\subsection{Definition and Properties}

We now state our version of the Tutte polynomial for $k$-polymatroids.

\begin{definition}[$k$-polymatroidal Tutte polynomial]\label{def: k-polyTutte}
For a $k$-polymatroid $P=(E,r)$ define 
\[
T_{k}(P;x,y)\coloneqq
\sum_{A\subseteq E}
(x-1)^{\,r(E)-r(A)}\,(y-1)^{\,k|A|-r(A)}.
\]
\end{definition}

Note that this is in fact a polynomial, as $r(A) \leq k|A|$, for all $A \subseteq E$ and $r$ takes nonnegative integer values.
Moreover, it is a specialization of the slightly more general function that has been defined in Section 7 of \cite{chavez2021new}.
The authors of \cite{chavez2021new} define for a polymatroid and weight function $\omega: E \to \mathbb{R}$, the function
\begin{equation}\label{eq:chavez}
    N(P,\omega;u,v) =\sum_{A \subseteq E} u^{r(E) -r(A)}v^{\sum_{e \in A}\omega(e) -r(A)}.
\end{equation}

Our polynomial $T_k$ is obtained from $N(P,\omega;u,v)$ (after a change of variables)  by taking constant weights $\omega(e)=k$ for all $e\in E$. 
Since we work throughout within the class of $k$-polymatroids, this constant choice is natural and carries no extra data, i.e. the parameter $k$ is already part of the structure, so no separate weight function needs to be specified.
This convention of taking constant weights $\omega(e) = k$ (so that no additional weight data is required) is also consistent with the hypergraph framework that we develop later. 
There, we consider a version of the Tutte polynomial in which the exponent of $(y-1)$ is given by a degree term minus the rank.
For $(k+1)$-uniform hypergraphs this degree term becomes $k|A|$, so in that case the hypergraph Tutte polynomial specializes to $T_k(P_H;x,y)$, where $P_H$ is the associated polymatroid of the $(k+1)-$uniform hypergraph.
Moreover, in our framework, the class of polymatroids associated to $(k+1)-$uniform hypergraphs is closed under deletion and contraction, so this specialization is compatible with the minor structure on both hypergraphs and $k$-polymatroids.

The following proposition, which gives a deletion-contraction reduction, follows directly from Proposition 7.1 of \cite{chavez2021new}.
\begin{proposition}\label{prop: dc-Tpoly}
     Let $P=(E,r)$ be a $k$-polymatroid, and let $e \in E$. Then
    \[
    T_{k}(P;x,y) = (x-1)^{r(E)- r(E\setminus{e})} T_{k}(P\setminus{e};x,y) + (y-1)^{k -r(e)}T_{k}(P/e;x,y).
    \]
\end{proposition}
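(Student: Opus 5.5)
The plan is to mirror the proof of Theorem~\ref{thm:dc-THG}: split the subset expansion of Definition~\ref{def: k-polyTutte} according to whether $e\in A$, and identify each part with the corresponding term on the right-hand side. Write
\[
T_k(P;x,y)=\sum_{A\subseteq E\setminus e}(x-1)^{r(E)-r(A)}(y-1)^{k|A|-r(A)}+\sum_{A\subseteq E,\ e\in A}(x-1)^{r(E)-r(A)}(y-1)^{k|A|-r(A)}.
\]

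For the first sum, every $A\subseteq E\setminus e$ satisfies $r_{P\setminus e}(A)=r(A)$, and $r_{P\setminus e}(E\setminus e)=r(E\setminus e)$. We rewrite the $x$-exponent as
\[
r(E)-r(A)=\bigl(r(E)-r(E\setminus e)\bigr)+\bigl(r_{P\setminus e}(E\setminus e)-r_{P\setminus e}(A)\bigr).
\]
The $y$-exponent $k|A|-r_{P\setminus e}(A)$ is unchanged. Factoring out $(x-1)^{r(E)-r(E\setminus e)}$ then leaves exactly $T_k(P\setminus e;x,y)$.

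For the second sum, we use the bijection $A=A'\cup\{e\}$ with $A'\subseteq E\setminus e$. By the definition of contraction, $r_{P/e}(A')=r(A'\cup e)-r(e)$ and $r_{P/e}(E\setminus e)=r(E)-r(e)$. This gives
\[
r(E)-r(A)=r_{P/e}(E\setminus e)-r_{P/e}(A').
\]
Since $|A|=|A'|+1$, we also get
\[
k|A|-r(A)=\bigl(k-r(e)\bigr)+\bigl(k|A'|-r_{P/e}(A')\bigr).
\]
Factoring out $(y-1)^{k-r(e)}$ leaves $T_k(P/e;x,y)$.

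There is no real obstacle here. The only points to check are that $P\setminus e$ and $P/e$ are again $k$-polymatroids, as noted earlier in the section, so $T_k$ is defined for them with the same $k$. We should also note that the exponents $r(E)-r(E\setminus e)$ and $k-r(e)$ are nonnegative, by monotonicity and the $k$-polymatroid condition respectively, so the identity holds as polynomials. Alternatively, one can cite Proposition~7.1 of \cite{chavez2021new} applied to \eqref{eq:chavez} with constant weight $\omega\equiv k$, $u=x-1$, $v=y-1$; the direct computation above is self-contained.
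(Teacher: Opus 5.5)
Your proof is correct. Both exponent identities hold: $r(E)-r(A'\cup e)=r_{P/e}(E\setminus e)-r_{P/e}(A')$, and $k|A'\cup e|-r(A'\cup e)=(k-r(e))+(k|A'|-r_{P/e}(A'))$. You also correctly note that $P\setminus e$ and $P/e$ remain $k$-polymatroids, so both terms on the right are defined.

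The paper gives no computation at this point. It simply invokes Proposition~7.1 of \cite{chavez2021new} for the weighted polynomial $N(P,\omega;u,v)$, specialized to constant weight $\omega\equiv k$ with $u=x-1$ and $v=y-1$. That is exactly your ``alternatively'' remark.

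Your main argument is different in form: it is the self-contained subset-splitting computation. This is the same argument the paper writes out for $\THG$ in Theorem~\ref{thm:dc-THG}, transplanted from component counts to rank functions.

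The two routes buy different things.
\begin{itemize}
\item The paper's citation is shorter and places $T_k$ inside the weighted family of \cite{chavez2021new}. It does, however, outsource the verification to an external source whose polymatroid and minor conventions must match the ones used here.
\item Your computation removes that dependence and shows exactly what is needed. The identity itself is a formal manipulation of the definitions of $r_{P\setminus e}$ and $r_{P/e}$. Monotonicity and $r(e)\le k$ make the prefactor exponents nonnegative. Closure of $k$-polymatroids under minors, where submodularity enters, guarantees that $T_k(P\setminus e)$ and $T_k(P/e)$ are polynomials.
\end{itemize}
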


As in the classical graph and matroid settings, $T_k$ also satisfies a natural multiplicativity property with respect to direct sums.  
\begin{proposition}\label{prop: multip-tutte-kpoly}
    The $k$-polymatroid Tutte polynomial is multiplicative on direct sums.
\end{proposition}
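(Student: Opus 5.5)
The plan is to prove multiplicativity directly from the subset expansion in Definition~\ref{def: k-polyTutte}, exactly as for part (i) of the hypergraph proposition. Let $P_1=(E_1,r_1)$ and $P_2=(E_2,r_2)$ be $k$-polymatroids with $E_1\cap E_2=\emptyset$, and let $P=P_1\oplus P_2$ with rank function $r=r_1\oplus r_2$. First we check that $P$ is again a $k$-polymatroid, so that $T_k(P)$ is defined. Normalization and monotonicity are immediate. Submodularity holds because intersection with $E_i$ commutes with unions and intersections, so the inequality splits into the two inequalities for $r_1$ and $r_2$. Finally, $r(\{e\})=r_i(\{e\})\le k$ for $e\in E_i$.

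Next we use the bijection $A\leftrightarrow (A_1,A_2)=(A\cap E_1,A\cap E_2)$ between subsets of $E_1\cup E_2$ and pairs of subsets of $E_1$ and $E_2$. Under this bijection:
\begin{itemize}
\item $r(A)=r_1(A_1)+r_2(A_2)$;
\item $r(E)=r_1(E_1)+r_2(E_2)$;
\item $|A|=|A_1|+|A_2|$.
\end{itemize}
Hence both exponents split additively:
\[
r(E)-r(A)=\bigl(r_1(E_1)-r_1(A_1)\bigr)+\bigl(r_2(E_2)-r_2(A_2)\bigr),
\]
\[
k|A|-r(A)=\bigl(k|A_1|-r_1(A_1)\bigr)+\bigl(k|A_2|-r_2(A_2)\bigr).
\]
The summand for $A$ is therefore the product of the summand for $A_1$ in $T_k(P_1)$ and the summand for $A_2$ in $T_k(P_2)$. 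The double sum factorizes, giving $T_k(P_1\oplus P_2;x,y)=T_k(P_1;x,y)\,T_k(P_2;x,y)$.

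There is no real obstacle here. The only point needing care is that the constant weight $k$ is the same for both summands, so the term $k|A|$ splits additively. This would fail if $P_1$ and $P_2$ were regarded with different values of $k$. We therefore fix a common $k$, regarding any $k'$-polymatroid with $k'\le k$ as a $k$-polymatroid.
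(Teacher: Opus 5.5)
Your proposal is correct and follows essentially the same route as the paper: the bijection $A\leftrightarrow(A\cap E_1,A\cap E_2)$, additive splitting of both exponents, and factorization of the double sum. Your extra checks (that $P_1\oplus P_2$ is again a $k$-polymatroid, and that a common $k$ is needed for $k|A|$ to split) are correct additions the paper leaves implicit.
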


\begin{proof}
    Let $P_1=(E_1,r_1)$ and $P_2=(E_2,r_2)$  be $k$-polymatroids on disjoint groundsets $E_1,E_2$, and let
$P = P_1 \oplus P_2 = (E,r)$ with $E = E_1 \cup E_2$ and
\[
r(A) = r_1(A \cap E_1) + r_2(A \cap E_2) \quad \text{for } A \subseteq E.
\]
Write $A_i \coloneqq A \cap E_i$ for $i=1,2$. 
Using the bijection $A \leftrightarrow (A_1,A_2)$ between subsets of $E$ and pairs
$(A_1 , A_2 )$, we get
\begin{align*}
T_k(P;x,y)
&= \sum_{A_1 \subseteq E_1} \sum_{A_2 \subseteq E_2}
(x-1)^{[r_1(E_1)-r_1(A_1)] + [r_2(E_2)-r_2(A_2)]} \\
&\qquad\cdot (y-1)^{[k|A_1|-r_1(A_1)] + [k|A_2|-r_2(A_2)]} \\
&= \Bigg(\sum_{A_1 \subseteq E_1}
(x-1)^{r_1(E_1)-r_1(A_1)} (y-1)^{k|A_1|-r_1(A_1)}\Bigg) \\
&\qquad \times \Bigg(\sum_{A_2 \subseteq E_2}
(x-1)^{r_2(E_2)-r_2(A_2)} (y-1)^{k|A_2|-r_2(A_2)}\Bigg) \\
&= T_k(P_1;x,y)\,T_k(P_2;x,y).
\end{align*}
\end{proof}

Another key property of $T_k$ is that it is compatible with duality, in analogy with the classical Tutte polynomial, in that taking the $k$-dual simply interchanges the variables.

\begin{proposition}\label{prop:duality k-poly-tutte}
    Let $P=(E,r)$ be a $k$-polymatroid and let $P^*$ be its dual. Then

    \[
    T_k(P^* ;x,y) = T_k(P ;y,x).
    \]
\end{proposition}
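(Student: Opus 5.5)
The plan is a direct computation from the subset expansion in Definition~\ref{def: k-polyTutte}, reindexing the sum by complements. The key input is the formula for the dual rank, $r^*(A)=k|A|+r(E\setminus A)-r(E)$. Setting $A=E$ gives $r^*(E)=k|E|-r(E)$, since $r(\emptyset)=0$.

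First we compute the two exponents appearing in $T_k(P^*;x,y)$ for a fixed $A\subseteq E$, writing $B=E\setminus A$. The $(x-1)$-exponent is
\[
r^*(E)-r^*(A)=k|E|-r(E)-k|A|-r(B)+r(E)=k|B|-r(B).
\]
The $(y-1)$-exponent is
\[
k|A|-r^*(A)=k|A|-k|A|-r(B)+r(E)=r(E)-r(B).
\]
So the summand of $T_k(P^*;x,y)$ indexed by $A$ equals $(x-1)^{k|B|-r(B)}(y-1)^{r(E)-r(B)}$.

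Next we reindex. The map $A\mapsto E\setminus A$ is a bijection of $2^E$ onto itself, so
\[
T_k(P^*;x,y)=\sum_{B\subseteq E}(y-1)^{r(E)-r(B)}(x-1)^{k|B|-r(B)}.
\]
This is exactly $T_k(P;y,x)$ by Definition~\ref{def: k-polyTutte}.

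There is no real obstacle here. The only points to check are that the dual rank is computed with the same $k$ that appears in the exponent $k|A|$ of $T_k$, and that $r^*(E)$ is simplified correctly. Both are immediate from the definitions. It is also worth noting that $P^*$ is again a $k$-polymatroid, as recalled earlier, so $T_k(P^*;x,y)$ is defined. An alternative route is induction on $|E|$ via Proposition~\ref{prop: dc-Tpoly}, using $P^*/e=(P\setminus e)^*$; the direct computation is shorter, so we take that route.
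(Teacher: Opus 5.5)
Your proposal is correct and follows the paper's own proof essentially line for line. Like the paper, you substitute $r^*(A)=k|A|+r(E\setminus A)-r(E)$, simplify the exponents to $k|E\setminus A|-r(E\setminus A)$ and $r(E)-r(E\setminus A)$, and then reindex the sum over complements to obtain $T_k(P;y,x)$.
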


\begin{proof}
    \begin{align*}
          T_k(P^* ;x,y) &= \sum_{A \subseteq E}(x-1)^{r^*(E)-r^*(A) }(y-1)^{k|A|-r^* (A)}\\
          &=\sum_{A \subseteq E}(x-1)^{k|E| -r(E) -k|A| -r(E\setminus{A}) +r(E) }(y-1)^{k|A|-k|A| -r(E\setminus{A} )+r(E)}\\
          &=
          \sum_{A \subseteq E} (x-1)^{ k|E\setminus{A}|  -r(E\setminus{A})}(y-1)^{r(E) -r(E\setminus{A})}\\
             &=
          \sum_{A \subseteq E} (x-1)^{ k|A|  -r(A)}(y-1)^{r(E) -r(A)}\\  
          &=T_k(P;y,x).
          \end{align*}
\end{proof}

In \cite{chavez2021new}, the authors also formulate a duality statement for the polynomial $N$ in \ref{eq:chavez}, but in a different setting, namely for the class of \emph{compact polymatroids}, and based on a slightly different concept of polymatroid duality.

We conclude this subsection by relating the $k$-polymatroid Tutte polynomial to the \emph{characteristic polynomial} of a polymatroid (see \cite{whittle1994critical}), defined by
\begin{align*}
    \chi(P;x) = \sum_{A \subseteq E(P)} (-1)^{|A|}x^{r(P)-r(A)}.
\end{align*}

\begin{proposition}\label{prop: char-tutte}
    Let $P$ be a $k$-polymatroid. 
    \begin{enumerate}[i.]
        \item If $k$ is odd, then
    \begin{align*}
        (-1)^{r(P)}T_k(P;x+1,0) = \chi(P;-x).
    \end{align*}
    \item If $k$ is even, then for any $\eta \in \mathbb{C}$ such that  $\eta^k =-1$, we have
    \begin{align*}
       (-\eta)^{r(P)} T_k(P;1+x\eta^{-1},1-\eta)=\chi(P;-x).
    \end{align*}
    \end{enumerate}
    
\end{proposition}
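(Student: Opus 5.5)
We substitute directly into the subset expansion of Definition~\ref{def: k-polyTutte} and compare term by term with $\chi(P;-x)$. No recursion is needed.

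Write
\[
T_k(P;X,Y)=\sum_{A\subseteq E}(X-1)^{r(E)-r(A)}(Y-1)^{k|A|-r(A)}.
\]
The aim in both cases is to choose $X,Y$ and a global prefactor so that each summand becomes $(-1)^{|A|}(-x)^{r(E)-r(A)}$. The key algebraic move is to split $(Y-1)^{k|A|-r(A)}=(Y-1)^{k|A|}(Y-1)^{-r(A)}$. We then absorb the factor $(Y-1)^{-r(A)}$ into $(X-1)^{r(E)-r(A)}$ via the global factor $(Y-1)^{-r(E)}$:
\[
(X-1)^{r(E)-r(A)}(Y-1)^{-r(A)}=(Y-1)^{-r(E)}\bigl((X-1)(Y-1)\bigr)^{r(E)-r(A)}.
\]
So we need $(Y-1)^k=-1$, which produces the sign $(-1)^{|A|}$, and $(X-1)(Y-1)=-x$. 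The prefactor is then $(Y-1)^{r(E)}$.

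\textbf{Case (ii), $k$ even.} Take $Y-1=-\eta$, so $(Y-1)^k=\eta^k=-1$ because $k$ is even. Then $(X-1)(-\eta)=-x$ forces $X-1=x\eta^{-1}$, and the prefactor is $(-\eta)^{r(P)}$. This matches the statement exactly.

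\textbf{Case (i), $k$ odd.} Take $Y-1=-1$, i.e.\ $Y=0$, so $(Y-1)^k=-1$. Then $X-1=x$ and the prefactor is $(-1)^{r(P)}$.

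The one point needing care is that the exponent $k|A|-r(A)$ is a nonnegative integer. This holds since $r(A)\le \sum_{e\in A}r(e)\le k|A|$ by submodularity, monotonicity and $r(\emptyset)=0$. Hence the splitting with negative powers of $Y-1$ is a legitimate identity of nonzero complex numbers ($Y-1\neq0$ in both cases), and the resulting polynomial identity in $x$ holds.

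I expect no real obstacle. The main thing to check is the bookkeeping of signs in the odd case, where $(-1)^{r(E)-r(A)}(-1)^{r(A)}=(-1)^{r(E)}$ must be handled correctly.
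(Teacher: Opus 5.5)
Your proof is correct and is essentially the paper's argument: substitute into the subset expansion of $T_k$ and match each summand with $(-1)^{|A|}(-x)^{r(P)-r(A)}$. Your observation that one needs $(Y-1)^k=-1$ and $(X-1)(Y-1)=-x$, with prefactor $(Y-1)^{r(P)}$, simply packages into one computation the two case-by-case calculations the paper writes out separately.
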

\begin{proof}

For the first part, when $k$ is odd, we simply verify

\begin{align*}    (-1)^{r(P)}T_k(P;x+1,0) &= (-1)^{r(P)}\sum_{A\subseteq E}
x^{\,r(P)-r(A)}\,(-1)^{\,k|A|-r(A)}\\
&=\sum_{A\subseteq E}
(-x)^{\,r(P)-r(A)}\,(-1)^{|A|}\\
&=\chi(P;-x),
\end{align*}
as $(-1)^k=-1$.

If $k$ is even, we will use the fact that $(-\eta)^{k|A|} = (-1)^{|A|}$, and write out
\begin{align*}
    (-\eta)^{r(P)} T_k(P;1+x\eta^{-1},1-\eta)&= (-\eta)^{r(P)}\sum_{A\subseteq E}
(x\eta^{-1})^{\,r(P)-r(A)}\,(-\eta)^{\,k|A|-r(A)}\\
&=
\sum_{A\subseteq E}
x^{\,r(P)-r(A)}\,(-\eta)^{\,k|A|}(-\eta)^{-r(A)}(\eta^{-1})^{\,r(P)-r(A)} (-\eta)^{r(P)}\\
&=\sum_{A\subseteq E}
(-x)^{\,r(P)-r(A)}\,(-1)^{|A|}\\
&=\chi(P;-x),
\end{align*}
as claimed.
    
\end{proof}

\subsection{Universality}
In this subsection, we generalize the universality theorem of the Tutte polymatroid for matroids to $k$-polymatroids. 
Let $P=(E,r)$ be a $k$-polymatroid.
For $e\in E$, denote $\Delta(e) \coloneqq r(P)- r(P\setminus{e})$ and $m(e) \coloneqq k - r(e)$.
We say that $e$ is of \emph{type} $(s,t)$ if $\Delta(e)=s$ and $m(e)=t$.

\begin{theorem}[Universality]\label{thm: poly-universality}
Let $\mathcal{P}_k$ be the class of $k$–polymatroids, and let $\mathcal{P}\subseteq \mathcal{P}_k$ be a minor closed subclass.
Then there exists a unique map 
\[
U:\mathcal{P} \longrightarrow \mathbb{Z}[x,y,a,b]
\]
such that for every $P\in\mathcal{P}$ and every $e\in E(P)$,
\begin{equation}\label{eq: reduc-form-poly}
U(P)=
\begin{cases}1, 
    & \text{if }P=\emptyset,\\[6pt]
a^{\,k-\Delta(e)}x^{\,\Delta(e)}\,U(P\setminus e)
  + b^{\,k-m(e)}y^{\,m(e)}\,U(P/e),
    & \text{otherwise.}
\end{cases}
\end{equation}

Moreover,
\begin{equation}\label{eq:univpoly}
    U(P;x,y,a,b) =a^{k|E|-r(P)}b^{r(P)}T_k\left(P; \frac{ x+b}{b},\frac{y+a}{a}\right),
\end{equation}

where $T_k$ denotes the $k$-polymatroid Tutte polynomial, introduced in Definition~\ref{def: k-polyTutte}.
\end{theorem}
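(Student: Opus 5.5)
Uniqueness and existence will be handled separately: uniqueness by induction on $|E|$, existence by showing that the explicit right-hand side of \eqref{eq:univpoly} satisfies the recursion \eqref{eq: reduc-form-poly} for every choice of $e$.

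\textbf{Uniqueness.} The recursion determines $U(P)$ from $U(P\setminus e)$ and $U(P/e)$. Both lie in $\mathcal{P}$ because the class is minor closed, and both have smaller groundsets. With the base case $U(\emptyset)=1$, induction on $|E|$ shows that at most one map satisfies \eqref{eq: reduc-form-poly}.

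\textbf{Existence.} Define
\[
\widetilde U(P)\coloneqq a^{k|E|-r(P)}b^{r(P)}T_k\!\left(P;\tfrac{x+b}{b},\tfrac{y+a}{a}\right).
\]
Substituting into Definition~\ref{def: k-polyTutte} gives $(x-1)\mapsto x/b$ and $(y-1)\mapsto y/a$, so
\[
\widetilde U(P)=\sum_{A\subseteq E} x^{\,r(E)-r(A)}\,y^{\,k|A|-r(A)}\,b^{\,r(A)}\,a^{\,k|E|-k|A|-r(E)+r(A)}.
\]
This shows $\widetilde U(P)\in\mathbb{Z}[x,y,a,b]$, provided $k|E|-k|A|-r(E)+r(A)\ge 0$. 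That inequality follows from submodularity and the bound $r(e)\le k$: indeed $r(E)\le r(A)+\sum_{e\notin A}r(e)\le r(A)+k|E\setminus A|$. We also have $\widetilde U(\emptyset)=1$.

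Next apply Proposition~\ref{prop: dc-Tpoly} to $\widetilde U(P)$ and track the prefactors. For the deletion term, $P\setminus e$ has $|E|-1$ elements and rank $r(P)-\Delta(e)$. Hence
\[
a^{k|E|-r(P)}b^{r(P)}=a^{k-\Delta(e)}\,b^{\Delta(e)}\cdot a^{k(|E|-1)-r(P\setminus e)}\,b^{r(P\setminus e)}.
\]
Multiplying by $(x/b)^{\Delta(e)}$ yields exactly $a^{k-\Delta(e)}x^{\Delta(e)}\,\widetilde U(P\setminus e)$.

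For the contraction term, $r(P/e)=r(P)-r(e)=r(P)-k+m(e)$. Hence
\[
a^{k|E|-r(P)}b^{r(P)}=a^{m(e)}\,b^{k-m(e)}\cdot a^{k(|E|-1)-r(P/e)}\,b^{r(P/e)}.
\]
Multiplying by $(y/a)^{m(e)}$ gives $b^{k-m(e)}y^{m(e)}\,\widetilde U(P/e)$. So $\widetilde U$ satisfies \eqref{eq: reduc-form-poly} for every $e$, and by uniqueness $U=\widetilde U$.

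The main obstacle is bookkeeping rather than anything conceptual. One must check that the exponents of $a$ and $b$ split exactly as claimed, and that the result is a genuine polynomial with nonnegative exponents. The subset expansion above settles the latter directly, avoiding any division by $a$ or $b$ in the ring.
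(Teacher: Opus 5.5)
Your proof is correct and follows the paper's argument essentially line for line. You get uniqueness by reducing to smaller minors in the minor-closed class, and existence by applying Proposition~\ref{prop: dc-Tpoly} to the closed form and checking that the prefactor ratios are $a^{k-\Delta(e)}b^{\Delta(e)}$ and $a^{m(e)}b^{k-m(e)}$. Your extra subset-expansion check that the closed form actually lies in $\mathbb{Z}[x,y,a,b]$, via the inequality $r(E)\le r(A)+k|E\setminus A|$, is a useful detail that the paper leaves implicit.
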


\begin{proof}
Let
\[
\hat U(P):=\beta(P)\,T_{k}\!\left(P;\frac{x+b}{b},\frac{y+a}{a}\right),
\ \text{ where } \
\beta(P):=a^{k|E(P)|-r(P)}b^{r(P)}.
\]
Since $\mathcal P$ is minor-closed, for any $e\in E(P)$ we have $P\setminus e\in\mathcal P$ and $P/e\in\mathcal P$.

\smallskip
\emph{Uniqueness.}
If $U$ satisfies the stated reduction rules of Equation~\ref{eq: reduc-form-poly} , then for any nonempty $P$ and any $e\in E(P)$ the value $U(P)$ is determined by $U(P\setminus e)$ and $U(P/e)$, and iterating reduces to the empty polymatroid. Hence such a map is unique.

\smallskip
\emph{Existence.}
As $\hat U$ exists, it suffices to show that $\hat U$ satisfies the same reduction formula as Equation~\ref{eq: reduc-form-poly}.

If $P=\emptyset$, then $|E(P)|=0$ and $r(P)=0$, so
\[
\hat U(\emptyset)=a^{0}b^{0}\,T_{k}\!\left(\emptyset;\frac{x+b}{b},\frac{y+a}{a}\right)=1.
\]

Now let $P$ be nonempty and fix $e\in E(P)$. Using the deletion--contraction relation for $T_k$ as in Proposition~\ref{prop: dc-Tpoly} ,
\[
T_{k}(P;X,Y)=(X-1)^{\Delta(e)}T_{k}(P\setminus e;X,Y)+(Y-1)^{m(e)}T_{k}(P/e;X,Y),
\]
and substituting $X=\frac{x+b}{b}$ and $Y=\frac{y+a}{a}$, we get
\begin{align*}
\hat U(P)
&=\beta(P)\left(\frac{x}{b}\right)^{\Delta(e)}T_{k}\!\left(P\setminus e;\frac{x+b}{b},\frac{y+a}{a}\right)
 +\beta(P)\left(\frac{y}{a}\right)^{m(e)}T_{k}\!\left(P/e;\frac{x+b}{b},\frac{y+a}{a}\right) \\
&=\frac{\beta(P)}{\beta(P\setminus e)}\left(\frac{x}{b}\right)^{\Delta(e)}\hat U(P\setminus e)
 +\frac{\beta(P)}{\beta(P/e)}\left(\frac{y}{a}\right)^{m(e)}\hat U(P/e).
\end{align*}
We verify for edges of type $(\Delta(e), m(e))$  that
\[
\frac{\beta(P)}{\beta(P\setminus e)}=a^{k-\Delta(e)}b^{\Delta(e)}
\quad\text{and}\quad
\frac{\beta(P)}{\beta(P/e)}=a^{m(e)}b^{k-m(e)}.
\]
Therefore,
\[
\hat U(P)=a^{k-\Delta(e)}x^{\Delta(e)}\hat U(P\setminus e)+b^{k-m(e)}y^{m(e)}\hat U(P/e),
\]
which is exactly the required reduction formula.

Thus $\hat U$ satisfies the stated recursion, and by uniqueness we must have $U=\hat U$.
\end{proof}

\begin{corollary}\label{cor: prod-universal}
  The map $U$ from Theorem \ref{thm: poly-universality} is multiplicative on the direct sum.
\end{corollary}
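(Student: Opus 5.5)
The plan is to use the closed form \eqref{eq:univpoly} from Theorem~\ref{thm: poly-universality} instead of the recursion, so that multiplicativity of $U$ follows from multiplicativity of $T_k$ (Proposition~\ref{prop: multip-tutte-kpoly}) and of the prefactor $\beta(P)=a^{k|E(P)|-r(P)}b^{r(P)}$.

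Let $P_1=(E_1,r_1)$ and $P_2=(E_2,r_2)$ be $k$-polymatroids with disjoint groundsets, and set $P=P_1\oplus P_2$. The statement implicitly requires $P$, $P_1$ and $P_2$ to lie in the minor-closed class $\mathcal P$. If only $P\in\mathcal P$ is assumed, then $P_1=P\setminus E_2$ and $P_2=P\setminus E_1$ are minors of $P$, so they also lie in $\mathcal P$. The direct sum $P$ is a $k$-polymatroid, since $r(\{e\})=r_i(\{e\})\le k$ for $e\in E_i$.

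First, the prefactor is multiplicative. We have $|E(P)|=|E_1|+|E_2|$, and by the definition of the direct-sum rank,
\[
r(P)=r_1(E\cap E_1)+r_2(E\cap E_2)=r_1(E_1)+r_2(E_2).
\]
Hence $k|E(P)|-r(P)=(k|E_1|-r_1(E_1))+(k|E_2|-r_2(E_2))$, and therefore $\beta(P)=\beta(P_1)\beta(P_2)$.

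Second, Proposition~\ref{prop: multip-tutte-kpoly} gives $T_k(P;X,Y)=T_k(P_1;X,Y)\,T_k(P_2;X,Y)$ as polynomials. This identity survives the substitution $X=\frac{x+b}{b}$, $Y=\frac{y+a}{a}$, working in the field of rational functions. Combining the two steps in \eqref{eq:univpoly} yields $U(P)=U(P_1)\,U(P_2)$.

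The only delicate point is that the substitution produces rational functions. The identity is nevertheless valid in $\mathbb{Q}(x,y,a,b)$, and both sides are known to lie in $\mathbb{Z}[x,y,a,b]$ by Theorem~\ref{thm: poly-universality}. So no real obstacle is expected.

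As a fallback, one could induct on $|E_1|+|E_2|$ using the recursion \eqref{eq: reduc-form-poly} with $e\in E_1$. This route uses $(P_1\oplus P_2)\setminus e=(P_1\setminus e)\oplus P_2$ and $(P_1\oplus P_2)/e=(P_1/e)\oplus P_2$, and the fact that $\Delta(e)$ and $m(e)$ computed in $P$ agree with those computed in $P_1$. The closed-form argument avoids these checks.
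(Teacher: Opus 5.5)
Your proof is correct and is essentially the paper's argument: the paper also observes that the prefactor $a^{k|E|-r(P)}b^{r(P)}$ factors over $P_1\oplus P_2$ and then concludes from Proposition~\ref{prop: multip-tutte-kpoly} via the closed form \eqref{eq:univpoly}. Your added remarks, that $P_1,P_2$ are minors of $P$ and that the identity survives substitution into rational functions, are harmless extra care that the paper leaves implicit.
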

\begin{proof}
    We remark that 
    \[
    a^{k|E_1 \cup E_2|-r(P_1 \oplus P_2)}b^{r(P_1 \oplus P_2)} = a^{k|E_1| -r(P_1)}b^{r(P_1)}\cdot  a^{k|E_2| -r(P_2)}b^{r(P_2)}.
    \]
    The result now follows from Proposition \ref{prop: multip-tutte-kpoly}
\end{proof}

\begin{remark}
Theorem~\ref{thm: poly-universality} may also be read as the corresponding recipe theorem.
Indeed, let $f$ be an invariant on a minor closed subclass $\mathcal{P} \subseteq \mathcal{P}_k$ satisfying the reduction rule \eqref{eq: reduc-form-poly} for some choice of $x,y,a,b$, together with the prescribed value on the empty polymatroid.
Then Theorem~\ref{thm: poly-universality} implies that $f$ is uniquely determined, and is given by the specialization of $T_k$ in \eqref{eq:univpoly}.
\end{remark}

We next isolate the two extremal types.
We denote an element of a $k$-polymatroid of type $(0,k)$  as a  \emph{$k$-loop}, and an element of type $(k,0)$ is known as a \emph{$k$-coloop}.
\begin{lemma}
If $e$ is a $k$-loop or a $k$-coloop in a $k$–polymatroid $P=(E,r)$, then the minors $P\setminus e$ and $P/ e$ are equal as polymatroids.
\end{lemma}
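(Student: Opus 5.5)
Both minors have ground set $E\setminus\{e\}$, so it suffices to show that their rank functions agree. For every $X\subseteq E\setminus\{e\}$ we have $r_{P\setminus e}(X)=r(X)$ and $r_{P/e}(X)=r(X\cup e)-r(e)$. The lemma is therefore equivalent to
\[
r(X\cup e)=r(X)+r(e)\qquad\text{for all } X\subseteq E\setminus\{e\}.
\]
We verify this identity separately for the two cases, using only monotonicity and submodularity.

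\emph{The $k$-loop case.} Here $m(e)=k$, so $r(e)=0$. By monotonicity, $r(X)\le r(X\cup e)$. By submodularity applied to $X$ and $\{e\}$, using $r(\emptyset)=0$, we get $r(X\cup e)\le r(X)+r(e)=r(X)$. Hence $r(X\cup e)=r(X)=r(X)+r(e)$, as required.

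\emph{The $k$-coloop case.} Here $\Delta(e)=k$, so $r(E)=r(E\setminus e)+k$. By submodularity, $r(E)\le r(E\setminus e)+r(e)$, which gives $r(e)\ge k$. The $k$-polymatroid bound gives $r(e)\le k$, so $r(e)=k$. For $X\subseteq E\setminus\{e\}$, apply submodularity to the sets $X\cup e$ and $E\setminus e$. Their union is $E$ and their intersection is $X$, so
\[
r(X\cup e)+r(E\setminus e)\ge r(E)+r(X),
\]
that is, $r(X\cup e)-r(X)\ge r(E)-r(E\setminus e)=k$. Submodularity applied to $X$ and $\{e\}$ gives the reverse bound $r(X\cup e)-r(X)\le r(e)=k$. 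Therefore $r(X\cup e)=r(X)+k=r(X)+r(e)$.

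In both cases $r_{P/e}(X)=r_{P\setminus e}(X)$ for every $X\subseteq E\setminus\{e\}$. The two minors have the same ground set and the same rank function, so $P\setminus e=P/e$.

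Alternatively, the coloop case follows from the loop case by duality. The $k$-dual interchanges deletion and contraction, $P^*/e=(P\setminus e)^*$ and $P^*\setminus e=(P/e)^*$, and a direct computation with $r^*$ shows that $e$ is a $k$-coloop of $P$ exactly when it is a $k$-loop of $P^*$. Then $P^*\setminus e=P^*/e$ gives $(P/e)^*=(P\setminus e)^*$, and since duality is an involution, $P/e=P\setminus e$. The direct submodularity argument above avoids having to check the loop/coloop correspondence under duality.
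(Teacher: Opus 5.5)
Your proposal is correct and follows essentially the same argument as the paper. Both show $r_{P/e}\le r_{P\setminus e}$ via submodularity on $X$ and $\{e\}$, and get the reverse inequality from monotonicity when $r(e)=0$ (loop case), or from submodularity on $X\cup e$ and $E\setminus e$ combined with $r(e)\le k$ (coloop case); your explicit derivation of $r(e)=k$ and the duality alternative are valid extras.
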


\begin{proof}
    The polymatroids $P/ e$ and $P\setminus e$ have the same groundsets, namely $E\setminus{e}$.
    Let $A \subseteq E\setminus{e}$.
    First we remark that
    \begin{align*}
              r_{P/e}(A) = r(A \cup e) - r(e) \leq r(A) + r(e) - r(e) = r(A) =r_{P \setminus{e}}(A)
    \end{align*}
    where the penultimate step follows from submodularity.
    
    Now, let $e$ be a $k$-loop. 
    It holds that $r(e)=0$. We obtain for $A\subseteq E\setminus{e}$ 
    \begin{align*}
        r_{P/e}(A) = r(A \cup e) - r(e)
        = r(A \cup e) 
        \geq r(A) = r_{P\setminus{e}}(A).
    \end{align*}
      Thus  $P/ e$ and $P\setminus e$ are equal as polymatroids.

If $e$ is a $k$-coloop, then $r(P) -r(P\setminus{e})=k$.

From submodularity it follows that $r(A \cup {e}) + r(E \setminus{e}) \geq r(E)  +r(A)$. 
Now we obtain
\begin{align*}
    r(A \cup e) \geq r(E) -  r(E \setminus{e}) + r(A) = k + r(A) \geq r(e) +r(A).
\end{align*}

This gives us
\begin{align*}
 r_{P/e}(A) = r(A \cup e) - r(e) \geq r(A) = r_{P\setminus{e}}(A),
\end{align*}
finishing the proof.
  
\end{proof}

\begin{corollary}
    
For the map $U$ from Theorem \ref{thm: poly-universality}, it holds that
\[
U(P)=
\begin{cases}
\left(x^{k}+ b^{k}\right)\,U(P/ e),
    & \text{if $e$ is a $k$-coloop,}\\[6pt]
\left(y^{k}+a^{k}\right)\,U(P\setminus{e}), 
   & \text{if $e$ is a $k$-loop.}\\[6pt]
\end{cases}
\]

\end{corollary}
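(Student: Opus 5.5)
The plan is to apply the reduction formula \eqref{eq: reduc-form-poly} of Theorem~\ref{thm: poly-universality} to the distinguished element $e$, and then use the preceding lemma to merge the two resulting terms into a single one.

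First suppose $e$ is a $k$-coloop, so $e$ has type $(k,0)$: $\Delta(e)=k$ and $m(e)=0$. Since $P$ contains $e$, it is nonempty, and \eqref{eq: reduc-form-poly} gives
\[
U(P)=a^{k-k}x^{k}\,U(P\setminus e)+b^{k-0}y^{0}\,U(P/e)=x^{k}\,U(P\setminus e)+b^{k}\,U(P/e).
\]
By the lemma, $P\setminus e=P/e$ as polymatroids. Both lie in the minor-closed class $\mathcal P$, and $U$ is a function of the polymatroid, so $U(P\setminus e)=U(P/e)$. Hence $U(P)=(x^k+b^k)\,U(P/e)$.

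The $k$-loop case is symmetric. Now $e$ has type $(0,k)$, so $\Delta(e)=0$ and $m(e)=k$, and the recursion reads
\[
U(P)=a^{k}x^{0}\,U(P\setminus e)+b^{0}y^{k}\,U(P/e)=a^k\,U(P\setminus e)+y^k\,U(P/e).
\]
The lemma again identifies $P/e$ with $P\setminus e$, which gives $U(P)=(y^k+a^k)\,U(P\setminus e)$.

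There is no real obstacle here. The one point to check is that a $k$-loop or $k$-coloop really has the type claimed. By definition a $k$-loop is an element of type $(0,k)$ and a $k$-coloop one of type $(k,0)$, so the exponents substitute directly. The lemma is the only nontrivial input.
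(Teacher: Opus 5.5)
Your proof is correct and follows essentially the paper's approach. The paper states this corollary without proof, as an immediate consequence of the preceding lemma ($P\setminus e = P/e$ for $k$-loops and $k$-coloops) combined with the reduction formula \eqref{eq: reduc-form-poly}; this mirrors the written-out proof of the analogous hypergraph statement, Corollary~\ref{cor: bridges-loops}.
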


\subsection{Convolution}
In this subsection, we give a $k$-polymatroid analogue of the convolution product formula for the matroid Tutte polynomial from \cite{kook1997convolution}.

Let $\mathcal{P}_k$ denote the class of $k$-polymatroids, and let $R$ be a commutative ring with unity.
For any functions $f,g: \mathcal{P}_k \to R$, we define the convolution product for a $k$-polymatroid $P=(E,r)$ by
\[
(f\circ g )(P)\;=\;\sum_{A\subseteq E} f(P|_A)\,g(P/A).
\]

We define $\delta:\mathcal{P}_k \to R$ by
\[
\delta(P)=
\begin{cases}
1 & \text{if } P=\emptyset,\\
0 & \text{otherwise}.
\end{cases}
\]

\begin{proposition}
    The convolution product on $k$-polymatroids is an associative operation, with identity $\delta$.
\end{proposition}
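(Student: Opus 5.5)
We verify the two claims directly from the definition, using two standard identities for polymatroid minors: for disjoint $B \subseteq E$ and $C \subseteq E\setminus B$,
\[
(P/B)|_C = (P|_{B\cup C})/B \quad\text{and}\quad (P/B)/C = P/(B\cup C).
\]
Both are immediate from the rank formulas. For $X\subseteq C$ the first gives $r(X\cup B)-r(B)$ on either side. For $X\subseteq E\setminus(B\cup C)$ the second gives
\[
\bigl(r(X\cup C\cup B)-r(B)\bigr)-\bigl(r(C\cup B)-r(B)\bigr)=r(X\cup B\cup C)-r(B\cup C).
\]
We also use $(P|_A)|_B=P|_B$ for $B\subseteq A$, which is trivial.

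\textbf{Identity.} We compute
\[
(\delta\circ f)(P)=\sum_{A\subseteq E}\delta(P|_A)\,f(P/A).
\]
Only $A=\emptyset$ contributes, and $P/\emptyset=P$ because $r(\emptyset)=0$, so the sum equals $f(P)$. Similarly,
\[
(f\circ\delta)(P)=\sum_{A\subseteq E}f(P|_A)\,\delta(P/A).
\]
Here only $A=E$ contributes, giving $f(P|_E)=f(P)$. We should note that $P/A$ is empty exactly when its ground set $E\setminus A$ is empty, so $\delta$ detects the empty ground set regardless of rank.

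\textbf{Associativity.} We expand both sides as sums over chains $B\subseteq A\subseteq E$. The left side is
\[
((f\circ g)\circ h)(P)=\sum_{A\subseteq E}\sum_{B\subseteq A} f\bigl((P|_A)|_B\bigr)\,g\bigl((P|_A)/B\bigr)\,h(P/A).
\]
The right side is
\[
(f\circ(g\circ h))(P)=\sum_{B\subseteq E}f(P|_B)\sum_{C\subseteq E\setminus B} g\bigl((P/B)|_C\bigr)\,h\bigl((P/B)/C\bigr).
\]
On the right we reindex by $A=B\cup C$, which gives a bijection between pairs $(B,C)$ with $C\subseteq E\setminus B$ and chains $B\subseteq A\subseteq E$. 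Applying the minor identities above, the right-hand summand becomes
\[
f(P|_B)\,g\bigl((P|_A)/B\bigr)\,h(P/A),
\]
which matches the left-hand summand term by term.

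The only real content is the pair of minor identities. They are routine, but they must be checked carefully, in particular that $k$-polymatroid minors stay within $\mathcal{P}_k$, which was already noted earlier. No step here is a genuine obstacle.
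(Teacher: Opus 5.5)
Your proof is correct and follows the paper's argument. The identity check keeps only the term $A=\emptyset$ (resp.\ $A=E$), and associativity comes from reindexing the double sum via $A=B\cup C$ together with $(P|_{B\cup C})/B=(P/B)|_C$ and $P/(B\cup C)=(P/B)/C$; you also verify these two minor identities from the rank formulas, which the paper uses without comment.
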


\begin{proof}
We quickly verify that
\begin{align*}
(f\circ \delta )(P)
&=\sum_{A\subseteq E} f(P|_A)\,\delta(P/A)
=\;f(P),\\
(\delta \circ f)(P)
&=\sum_{A\subseteq E} \delta(P|_A)\,f(P/A)
=\;f(P).
\end{align*}

The proof of associativity of $\circ$ is the same as in \cite{Knapp2018}.
\begin{align*}
[(f\circ g)\circ h](P)
  &= \sum_{A\subseteq E(P)} (f\circ g)(P|_A)\; h(P/A)\\
  &= \sum_{A\subseteq E(P)} \sum_{B\subseteq E(P|_A)} 
        f\bigl((P|_A)|_B\bigr)\; g\bigl((P|_A)/B\bigr)\; h(P/A)\\
  &= \sum_{B\subseteq E(P)} \sum_{C\subseteq E(P/B)} 
        f\bigl((P|_B)\bigr)\;
        g\bigl((P|_{B\cup C})/B\bigr)\;
        h\bigl(P/(B\cup C)\bigr)
        \quad\text{(where }A=B\cup C\text{)}\\
  &= \sum_{B\subseteq E(P)} \sum_{C\subseteq E(P/B)} 
        f(P|_B)\;
        g\bigl((P/B)|_C\bigr)\;
        h\bigl((P/B)/C\bigr)\\
  &= \sum_{B\subseteq E(P)} 
        f(P|_B)\; (g\circ h)(P/B)\\
  &= [\,f\circ (g\circ h)\,](P).
\end{align*}
\end{proof}

We state our $k$-polymatroid version of the convolution product formula for the Tutte polynomial.

\begin{theorem}\label{thm: conv}
Let $P=(E,r)$ be a $k$-polymatroid. 
    \begin{enumerate}[i.]
        \item If $k$ is odd, then the $k$-polymatroid Tutte polynomial satisfies 
        \begin{align*}
            T_k(P;x+1,y+1)= \displaystyle\sum\limits_{A \subseteq E} T_k(P|_A;0,y+1) T_k(P/A;x+1,0) .
        \end{align*}
        \item If $k$ is even, then the $k$-polymatroid Tutte polynomial satisfies both
        \begin{align*}        T_k(P;x+1,y+1)&=\displaystyle\sum\limits_{A \subseteq E}(-1)^{r(P|_A)}\chi\left((P|_A)^*;-y \right)T_k(P/A;x+1,0),
        \end{align*}
        and,  
        \begin{align*}            T_k(P;x+1,y+1)&=\displaystyle\sum\limits_{A \subseteq E}(-1)^{r(P/A)}T_k\left(P|_A;0,y+1\right)  \chi(P/A;-x),
        \end{align*}
        where $\chi(P;x)=\sum\limits_{A \subseteq E(P)}(-1)^{ |A|} x^{ r(P)-r(A)}$ is the polymatroid characteristic polynomial. 
    \end{enumerate}
\end{theorem}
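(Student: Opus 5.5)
The plan is to prove all three identities by directly expanding both sides as double subset sums and using the standard Möbius cancellation $\sum_{B\subseteq A\subseteq D}(-1)^{|D\setminus A|}=[B=D]$, exactly as in the matroid case \cite{kook1997convolution}. The only polymatroid inputs are the rank functions of the minors:
\[
r_{P|_A}(B)=r(B)\quad (B\subseteq A), \qquad r_{P/A}(C)=r(A\cup C)-r(A)\quad (C\subseteq E\setminus A).
\]
In particular $r(P|_A)=r(A)$ and $r(P/A)=r(E)-r(A)$. The left side is $\sum_{D\subseteq E}x^{r(E)-r(D)}y^{k|D|-r(D)}$ directly from Definition~\ref{def: k-polyTutte}.

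\emph{Odd $k$.} Expand the two factors:
\[
T_k(P|_A;0,y+1)=\sum_{B\subseteq A}(-1)^{r(A)-r(B)}y^{k|B|-r(B)},
\qquad
T_k(P/A;x+1,0)=\sum_{C\subseteq E\setminus A}x^{r(E)-r(A\cup C)}(-1)^{k|C|-r(A\cup C)+r(A)}.
\]
Next, reindex the triple sum over $(A,B,C)$ by the chain $B\subseteq A\subseteq D:=A\cup C$, with $C=D\setminus A$. The sign becomes $(-1)^{2r(A)-r(B)-r(D)+k|D\setminus A|}$. Since $k$ is odd, this equals $(-1)^{r(B)+r(D)}(-1)^{|D\setminus A|}$. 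The monomial $x^{r(E)-r(D)}y^{k|B|-r(B)}$ does not depend on $A$. For fixed $B\subseteq D$, the inner sum over $A$ is therefore $\sum_{B\subseteq A\subseteq D}(-1)^{|D\setminus A|}$, which vanishes unless $B=D$. The surviving terms have $A=B=D$ and sign $(-1)^{2r(D)}=1$, which gives exactly the left side.

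\emph{Even $k$.} Here $(-1)^{k|C|}=1$, so the alternating factor $(-1)^{|D\setminus A|}$ no longer appears by itself. The point of replacing one $T_k$ factor by a characteristic polynomial is that $\chi$ carries the factor $(-1)^{|C|}$ explicitly.

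For the second identity, compute
\[
(-1)^{r(P/A)}\chi(P/A;-x)=\sum_{C\subseteq E\setminus A}(-1)^{|C|}x^{r(E)-r(A\cup C)}(-1)^{r(A\cup C)-r(A)}.
\]
After this step the argument is word-for-word the odd case: the signs coming from the ranks cancel to $(-1)^{r(B)+r(D)}$, and the Möbius sum kills every term with $B\neq D$.

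For the first identity, use the dual rank function of $(P|_A)^*$,
\[
r^*(S)=k|S|+r(A\setminus S)-r(A), \qquad r^*\bigl((P|_A)^*\bigr)=k|A|-r(A).
\]
Substituting $B=A\setminus S$ gives
\[
(-1)^{r(A)}\chi\bigl((P|_A)^*;-y\bigr)=(-1)^{r(A)}\sum_{B\subseteq A}(-1)^{|A\setminus B|}(-y)^{k|B|-r(B)}.
\]
The alternating sign now comes from the $B$-side rather than the $C$-side, so the roles are reversed. One fixes $A\subseteq D$, sums over $B$ with $A\subseteq\cdots$, or more precisely reorganizes the sum over the chain $B\subseteq A\subseteq D$ so that the inner sum over $A$ becomes $\sum_{B\subseteq A\subseteq D}(-1)^{|A\setminus B|}$, and this again forces $B=D$.

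The main obstacle is the sign bookkeeping in the even case. In each term one must check that all rank-dependent signs collapse to a factor independent of $A$, so that the pure Möbius sum is what remains. Here $k$ even is used to absorb powers such as $(-1)^{k|B|}$, and the prefactors $(-1)^{r(P|_A)}$ and $(-1)^{r(P/A)}$ are exactly what is needed to cancel the leftover $(-1)^{r(A)}$. I would check this sign calculation carefully for each of the two even identities. Alternatively, one could derive them from the odd-style identity via Proposition~\ref{prop: char-tutte}(ii) and Proposition~\ref{prop:duality k-poly-tutte}, but the direct expansion seems cleaner.
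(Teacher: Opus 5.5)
Your proposal is correct, and it takes a genuinely different route from the paper.

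\textbf{The paper's route.} The paper uses the convolution-algebra method of \cite{kook1997convolution}. It writes $T_k(P;x+1,y+1)=\zeta(1,y)\circ\zeta(x,1)(P)$, where $\zeta(x,y)(P)=x^{r(P)}y^{r^*(P)}$. It proves that $\zeta(x,y)$ and $\zeta(\eta x,\eta y)$ are convolution inverses when $\eta^k=-1$, and then uses associativity to insert the identity in the middle. The parity of $k$ enters through $\zeta(-\eta,\eta)\circ\zeta(\eta,-\eta)$. This equals $\delta$ for odd $k$, but equals $g(P)=2^{|P|}(-1)^{|P|+r(P)}$ for even $k$. In the even case the paper then computes $T_k(0,y+1)\circ g$ and $g\circ T_k(x+1,0)$ by explicit double sums. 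It recognizes the results as $(-1)^{r}\chi^*(-y)$ and $(-1)^{r}\chi(-x)$ via Propositions~\ref{prop: char-tutte} and~\ref{prop:duality k-poly-tutte}.

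\textbf{Your route.} You expand everything once over chains $B\subseteq A\subseteq D$. You then kill the off-diagonal terms with $\sum_{B\subseteq A\subseteq D}(-1)^{|D\setminus A|}=\sum_{B\subseteq A\subseteq D}(-1)^{|A\setminus B|}=[B=D]$.

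The sign check you deferred for the first even identity does go through. The two factors contribute $(-1)^{r(A)+|A\setminus B|+k|B|-r(B)}$ and $(-1)^{k|D\setminus A|-r(D)+r(A)}$. For even $k$ their product is $(-1)^{r(B)+r(D)}(-1)^{|A\setminus B|}$, the monomial $x^{r(E)-r(D)}y^{k|B|-r(B)}$ is independent of $A$, and the diagonal $A=B=D$ carries sign $+1$.

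\textbf{What each route buys.} Your argument is shorter and fully elementary: it needs no roots $\eta$, no associativity, and no duality or characteristic-polynomial propositions. It also makes visible that the second identity of part ii never uses the parity of $k$. For odd $k$ that identity is just part i, since $(-1)^{r(Q)}\chi(Q;-x)=T_k(Q;x+1,0)$. Only the first even identity genuinely needs $(-1)^{k|B|}=(-1)^{k|C|}=1$.

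The paper's route instead places the result in the incidence-algebra framework. It explains structurally why odd and even $k$ behave differently, and it produces reusable lemmas on $\zeta$ for other convolution-type identities.
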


In order to prove the theorem, we will first introduce the function $\zeta:\mathcal{P} \to \mathbb{C}[x,y] $, defined by $\zeta(x,y)(P) = x^{r(P)}y^{r^*(P)}$, and prove some of its properties.
Note that $T_k$ can be expressed in terms of $\zeta$, as we have

\begin{align*}
    \zeta(1,y) \circ \zeta(x,1) (P) &= \sum_{A \subseteq E(P)} x^{r_{P/A}(P/A)}y^{ r_{P|_A}^*(P|_A)}\\
    &=\sum_{A \subseteq E(P)}x^{r_P(P)-r_P(A)}y^{k|A| +r_{P|_A}(A \setminus{A} ) -r_{P|_A}(A) }\\
    &=\sum_{A \subseteq E(P)}x^{r_P(P)-r_P(A)}y^{k|A| -r_P(A)}\\
    &=T_k(P;x+1,y+1).
\end{align*}

In particular, $T_k(P;x+1,0)=\zeta(1,-1) \circ \zeta(x,1) (P) $ and $T_k(0,y+1)=\zeta(1,y)\circ \zeta(-1,1)(P)$.

\begin{lemma}\label{lem: zetainverse}
    Let $P$ be a $k$-polymatroid, and let $\zeta(x,y)(P) = x^{r(P)}y^{r^*(P)}$. 
    Then for all $\eta \in \mathbb{C}$ such that $\eta^k=-1$, it holds that
    \[
    \zeta(x,y) \circ \zeta(\eta x,\eta y)(P) =  \zeta(\eta x,\eta y) \circ \zeta(x,y) (P)=\delta(P)
    \]

\end{lemma}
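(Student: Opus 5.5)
We will verify the identity directly from the definition of the convolution product and reduce it to a binomial-type cancellation. Write $P=(E,r)$. For $A\subseteq E$ the rank of the restriction is $r_{P|_A}(A)=r(A)$ and its dual rank is $r^*_{P|_A}(A)=k|A|+r(\emptyset)-r(A)=k|A|-r(A)$. For the contraction, $r_{P/A}(E\setminus A)=r(E)-r(A)$, and
\[
r^*_{P/A}(E\setminus A)=k|E\setminus A|+r_{P/A}(\emptyset)-r_{P/A}(E\setminus A)=k|E\setminus A|-r(E)+r(A).
\]
Substituting these into the definition gives
\[
\zeta(x,y)\circ\zeta(\eta x,\eta y)(P)=\sum_{A\subseteq E} x^{r(A)}y^{k|A|-r(A)}(\eta x)^{r(E)-r(A)}(\eta y)^{k|E\setminus A|-r(E)+r(A)}.
\]

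The next step is to collect exponents. The power of $x$ is $r(E)$ and the power of $y$ is $k|E|-r(E)$, so both are independent of $A$. The power of $\eta$ is
\[
r(E)-r(A)+k|E\setminus A|-r(E)+r(A)=k|E\setminus A|.
\]
Since $\eta^k=-1$, this contributes $(-1)^{|E\setminus A|}$. The sum therefore becomes
\[
x^{r(E)}y^{k|E|-r(E)}\sum_{A\subseteq E}(-1)^{|E\setminus A|}.
\]
This last sum equals $(1-1)^{|E|}$, which is $0$ unless $E=\emptyset$, in which case it is $1$. When $E=\emptyset$ we also have $r(E)=0$, so the prefactor is $1$. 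The whole expression thus equals $\delta(P)$.

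The other order is handled the same way. In $\zeta(\eta x,\eta y)\circ\zeta(x,y)(P)$ the factor $\eta$ now sits on the restriction term. Its exponent is
\[
r(A)+k|A|-r(A)=k|A|,
\]
which contributes $(-1)^{|A|}$. The same cancellation $\sum_{A\subseteq E}(-1)^{|A|}=\delta$ then applies, with the same $A$-independent prefactor.

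The main point needing care is the bookkeeping of the dual ranks of $P|_A$ and $P/A$, since the dual is taken with respect to each minor's own ground set. It is exactly this that makes the $\eta$-exponent collapse to a multiple of $k$ while the $x$- and $y$-exponents become independent of $A$. The rest is routine.
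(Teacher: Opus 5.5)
Your proposal is correct and follows essentially the same route as the paper. You expand the convolution, observe that the $x$- and $y$-exponents sum to $r(P)$ and $r^*(P)$ independently of $A$, reduce the $\eta$-exponent to $k|E\setminus A|$ (or $k|A|$ for the other order), and finish with $(1-1)^{|E|}$. Your explicit bookkeeping of the dual ranks of $P|_A$ and $P/A$, and of the prefactor when $E=\emptyset$, just spells out steps the paper leaves implicit.
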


\begin{proof}
    We see that
    \begin{align*}
        \zeta(x,y) \circ \zeta(\eta x, \eta y)(P) &= \sum_{A \subseteq E(P)}x^{r(P|_A)}y^{r^*(P|_A)}(\eta x)^{r(P/A)}(\eta y)^{r^*(P/A)}\\
        &=\sum_{A \subseteq E(P)} x^{ r(P|_A) + r(P/A)} y^{r^*(P|_A)+r^*(P/A)} \eta^{r(P/A)+r^*(P/A)}\\
        &= x^{r(P)}y^{ r^* (P)} \sum_{A \subseteq E(P)}\eta ^{ k(|P|-|A|)}\\
        &=x^{r(P)}y^{ r^* (P)} \sum_{A \subseteq E(P)}\eta ^{k|A|}\\
        &=x^{r(P)}y^{ r^* (P)} \sum_{A \subseteq E(P)}(-1)^{|A|}\\
        &=x^{r(P)}y^{ r^* (P)} (1-1)^{|P|}\\
        &= \delta(P).
    \end{align*}
    We used the bijection $A \leftrightarrow P \setminus{A}$ and the binomial theorem.

 Proving $\zeta(\eta x,\eta y)\circ \zeta(x,y)(P)=\delta(P)$ proceeds analogously.

\end{proof}

\begin{lemma}\label{lem: zetaproduct}

Let $P \in \mathcal{P}_k$ and let $\eta \in \mathbb{C}$ be such that $\eta^{k}=-1$.
Then
\begin{align*}
  \zeta(-\eta,\eta)\circ \zeta(\eta,-\eta) (P)
  =
  \begin{cases}
    2^{|P|}(-1)^{|P|+r(P)}, & \text{if $k$ is even},\\[4pt]
    \delta(P),              & \text{if $k$ is odd}.
  \end{cases}
\end{align*}
\end{lemma}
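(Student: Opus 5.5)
The plan is to expand the convolution directly from the definitions of $\zeta$, the dual rank $r^*$ and the minor operations, and to show that each summand's dependence on $A$ reduces to a sign. The $A$-sum then collapses by the binomial theorem, in the same spirit as the proof of Lemma~\ref{lem: zetainverse}.

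First I compute the four ranks appearing in the summand indexed by $A\subseteq E$. For the restriction, $r(P|_A)=r(A)$ and $r^*(P|_A)=k|A|+r_{P|_A}(\emptyset)-r(A)=k|A|-r(A)$, exactly as in the computation of $\zeta(1,y)\circ\zeta(x,1)$ above. For the contraction, $r(P/A)=r(P)-r(A)$. Its $k$-dual rank is
\[
r^*(P/A)=k|E\setminus A|+r_{P/A}(\emptyset)-r_{P/A}(E\setminus A)=k(|P|-|A|)-r(P)+r(A).
\]
Substituting these, the summand is
\[
(-\eta)^{r(A)}\,\eta^{k|A|-r(A)}\,\eta^{r(P)-r(A)}\,(-\eta)^{k(|P|-|A|)-r(P)+r(A)}.
\]

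Next I separate the powers of $\eta$ from the signs. The total exponent of $\eta$ telescopes to $k|P|$, so these factors contribute $\eta^{k|P|}=(-1)^{|P|}$ by $\eta^k=-1$. The remaining sign is $(-1)^{2r(A)+k(|P|-|A|)-r(P)}=(-1)^{k(|P|-|A|)+r(P)}$. Hence
\[
\zeta(-\eta,\eta)\circ\zeta(\eta,-\eta)(P)=(-1)^{|P|+r(P)}\sum_{A\subseteq E}(-1)^{k(|P|-|A|)}.
\]

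Finally I split by the parity of $k$. If $k$ is even, every term of the sum equals $1$, giving $2^{|P|}(-1)^{|P|+r(P)}$. If $k$ is odd, the sum equals $(-1)^{|P|}\sum_A(-1)^{|A|}=(-1)^{|P|}(1-1)^{|P|}$. The whole expression is then $(-1)^{r(P)}0^{|P|}$, which is $\delta(P)$ because $r(\emptyset)=0$. There is no real obstacle. The only points needing care are the formula for $r^*(P/A)$, which uses $r_{P/A}(\emptyset)=0$, and tracking the signs so that the $r(A)$ contributions indeed cancel.
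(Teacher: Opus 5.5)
Your proposal is correct and follows the paper's proof essentially line for line. It uses the same rank computations for $P|_A$ and $P/A$, the same telescoping of the $\eta$-exponents to $k|P|$ with the $r(A)$ signs cancelling, and the same parity split finished by the binomial theorem, and you are slightly more explicit than the paper in noting that $r(\emptyset)=0$ is what makes $(-1)^{r(P)}0^{|P|}$ equal to $\delta(P)$.
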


\begin{proof}
If $k$ is even, we obtain
\begin{align*}
     \zeta(-\eta,\eta)\circ \zeta(\eta,-\eta) (P) &= \sum_{A \subseteq E(P)}(-\eta)^{r(P|_A)}\eta^{r^*(P|_A)} \eta^{r(P/A)}(-\eta)^{r^*(P/A)}\\
     &= \sum_{A \subseteq E(P)}(-\eta)^{r(A)}\eta^{k|A|-r(A)} \eta^{r(P)-r(A)}(-\eta)^{k(|P|-|A|) -r(P)+r(A)}\\
     &=\sum_{A \subseteq E(P)} \eta^{k|P|} (-1)^{k(|P|-|A|)-r(P) }\\
     &= \sum_{A \subseteq E(P)}(-1)^{|P|} (-1)^{-r(P) }\\
     &=2^{|P|}(-1)^{|P|+r(P)},
\end{align*}

where we have used that $\eta^{k}=-1$ and $(-1)^{k}=1$.

If $k$ is odd, then we get
\begin{align*}
     \zeta(-\eta,\eta)\circ \zeta(\eta,-\eta) (P)
     &=\sum_{A \subseteq E(P)} \eta^{k|P|} (-1)^{k(|P|-|A|)-r(P) }\\
     &= \sum_{A \subseteq E(P)} (-1)^{-|A|-r(P) }\\
     &=\delta(P).
\end{align*}

\end{proof}

\begin{proof}[Proof of Theorem \ref{thm: conv}]

The proof for the case when $k$ is odd is almost identical as in Theorem 1 of \cite{kook1997convolution}. Let $\eta \in \mathbb{C}$, such that $\eta^{k}=-1$.
Applying Lemma \ref{lem: zetainverse}, Lemma \ref{lem: zetaproduct}, and associativity of the convolution product, gives us

\begin{align*}
    T_k(P;x+1, y+1)&=
    \zeta(1,y) \circ \zeta(x,1) (P)\\
    &=
    \zeta(1,y) \circ \left[  \zeta(-1,1) \circ \zeta(-\eta,\eta) \right] \circ \left[ \zeta(\eta,-\eta) \circ  \zeta(1,-1) \right]  \circ \zeta(x,1) (P)\\
    &= T_k(0,y+1) \circ \left[  \zeta(-\eta,\eta)\circ \zeta(\eta,-\eta)  \right] \circ T_k(x+1,0) (P)\\
    &=T_k(0,y+1) \circ \delta \circ T_k(x+1,0) (P)\\
    &=T_k(0,y+1) \circ T_k(x+1,0) (P).
\end{align*}

    Now, let $k$ be even. We have

\begin{align*}
    T_k(P;x+1, y+1)&=
    \zeta(1,y) \circ \zeta(x,1) (P)\\
    &= T_k(0,y+1) \circ \left[  \zeta(-\eta,\eta)\circ \zeta(\eta,-\eta)  \right] \circ T_k(x+1,0) (P)\\
    &=T_k(0,y+1) \circ g \circ T_k(x+1,0) (P),
\end{align*}
where $g(P) = 2^{|P|}(-1)^{|P|+r(P)}$. 

We now compute $T_k(0,y+1)\circ g(P)$.  
Starting from the convolution definition, we have
\[
T_k(0,y+1)\circ g(P)
=
\sum_{A\subseteq E(P)} T_k(P|_A;0,y+1)\,2^{|P/A|}(-1)^{|P/A|+r(P/A)}.
\]
Using the subset expansion of $T_k$ at $(0,y+1)$
\[
T_k(P|_A;0,y+1)
= \sum_{B\subseteq A}(-1)^{r(A)-r(B)}\,y^{k|B|-r(B)},
\] 
and with the identities $|P/A|=|P|-|A|$ and $
r(P/A)=r_{P/A}(E(P)\setminus A)=r(P)-r(A)$,
where the last two ranks are taken in $P$, we get
\begin{align*}
T_k(0,y+1)\circ g(P)
&= \sum_{A\subseteq E(P)}\sum_{B\subseteq A}
(-1)^{r(A)-r(B)}y^{k|B|-r(B)}\,
2^{|P|-|A|}(-1)^{|P|-|A|+r(P)-r(A)}\\
&= \sum_{A\subseteq E(P)}\sum_{B\subseteq A}
2^{|P|-|A|}(-1)^{|P|-|A|+r(P)-r(B)}y^{k|B|-r(B)}\\
&= (-2)^{|P|}\sum_{A\subseteq E(P)}\sum_{B\subseteq A}
(-2)^{-|A|}(-1)^{r(P)-r(B)}y^{k|B|-r(B)}.
\end{align*}
Swapping the order of summation yields
\begin{align*}
T_k(0,y+1)\circ g(P)
&= (-2)^{|P|}\sum_{B\subseteq E(P)}
(-1)^{r(P)-r(B)}y^{k|B|-r(B)}
\sum_{\substack{A\\ B\subseteq A\subseteq E(P)}}(-2)^{-|A|}.
\end{align*}
For the inner sum, write $A=B\cup C$ with $C\subseteq E(P)\setminus B$, giving
\begin{align*}
\sum_{\substack{A\\ B\subseteq A\subseteq E(P)}}(-2)^{-|A|}
&=\sum_{C\subseteq E(P)\setminus B}(-2)^{-|B|-|C|}\\
&= (-2)^{-|B|}\sum_{C\subseteq P\setminus B}(-2)^{-|C|}\\
&=(-2)^{-|B|}\sum_{j=0}^{|P|-|B|}
\binom{|P|-|B|}{j}\left(-\tfrac12\right)^{j}\\
&=(-2)^{-|B|}\left(1-\tfrac12\right)^{|P|-|B|}\\
&=2^{-|P|}(-1)^{|B|}.
\end{align*}
Substituting this gives
\begin{align*}
T_k(0,y+1)\circ g(P)
&= (-2)^{|P|}\sum_{B\subseteq E(P)}
(-1)^{r(P)-r(B)}y^{k|B|-r(B)}\cdot 2^{-|P|}(-1)^{|B|}\\
&= (-1)^{|P|}\sum_{B\subseteq E(P)}
(-1)^{r(P)-r(B)}y^{k|B|-r(B)}(-1)^{|B|}.
\end{align*}
By introducing $\eta \in \mathbb{C}$, with $\eta^k=-1$, we note that
\begin{align*}
T_k(0,y+1)\circ g(P)
&= (-1)^{|P|}\sum_{B\subseteq E(P)}
(-1)^{r(P)-r(B)}y^{k|B|-r(B)}(-1)^{|B|}\\
&= (-1)^{|P|}\sum_{B\subseteq E(P)}
(-1)^{r(P)-r(B)}( y\eta^{-1})^{k|B|-r(B)}\eta^{k|B|-r(B)}(-1)^{|B|}\\
\end{align*}
Since $\eta^{k}=-1$ we have $\eta^{k|B|}(-1)^{|B|}=1$

Thus we have
\begin{align*}
T_k(0,y+1)\circ g(P)
&= (-1)^{|P|}\sum_{B\subseteq E(P)}
\eta^{-r(B)}(-1)^{r(P)-r(B)}(y\eta ^{-1})^{k|B|-r(B)}\\
&=(-1)^{|P|}\eta^{ -r(P)}\sum_{B\subseteq E(P)}
\eta^{r(P)-r(B)}(-1)^{r(P)-r(B)}(y\eta ^{-1})^{k|B|-r(B)}
\end{align*}
We recognize the expression of the
remaining sum as  $(-1)^{ |P|} \eta^{-r(P)} T_k(P;1- \eta, 1+  y\eta^{-1})$.

Using the duality property from Proposition~\ref{prop:duality k-poly-tutte} and Proposition~\ref{prop: char-tutte}, we get
\begin{align*}
    (-1)^{ |P|} \eta^{-r(P)} T_k(P;1- \eta, 1+  y\eta^{-1})& =(-1)^{ |P|} \eta^{-r(P)} T_k(P^*; 1+  y\eta^{-1}, 1- \eta)\\
    &=(-1)^{ |P|} \eta^{-r(P)}  (-\eta)^{ -r(P*)}\chi(P^*;-y)
\end{align*}

We see that $(-\eta)^{-r(P^*)}=(-\eta)^{-k|P| +r(P)} = (-1)^{|P|+r(P)}\eta^{r(P)}$, giving us that
\begin{align*}
    T_k(0,y+1)\circ g(P) &= (-1)^{r(P)}\chi(P^*;-y).
\end{align*}
Thus
\begin{align*}
     T_k(P;x+1, y+1)&=
  T_k(0,y+1) \circ g \circ T_k(x+1,0) (P)\\
    &= (-1)^{r(P)}\chi^*(-y)\circ T_k(x+1,0) (P) \\
&=\displaystyle\sum\limits_{A \subseteq E(P)}(-1)^{r(P|_A)}\chi\left((P|_A)^*;-y \right)T_k(P/A;x+1,0),
\end{align*}
where $\chi^*(x):  P \mapsto  \chi(P^*;x)$.

For the second equality of ii. of Theorem \ref{thm: conv}, we make use of the associativity property of the convolution product, and compute $g \circ T_k(x+1,0) (P)$ instead.

We have  
\[
g\circ T_k(x+1,0)(P)
=
\sum_{A\subseteq E(P)} 2^{|A|}(-1)^{|A|+r(A)}\,T_k(P/A;x+1,0).
\]

Using the subset expansion of $T_k$ at $(x+1,0)$,
\[
T_k(P/A;x+1,0)
= \sum_{B\subseteq E(P)\setminus A}
x^{\,r(P/A)-r_{P/A}(B)}(-1)^{k|B|-r_{P/A}(B)},
\]
and the identities
\[
r(P/A)=r(P)-r(A),
\qquad
r_{P/A}(B)=r(A\cup B)-r(A),
\]
we obtain
\begin{align*}
g\circ T_k(x+1,0)(P)
&= \sum_{A\subseteq E(P)}\sum_{B\subseteq E(P)\setminus A}
2^{|A|}(-1)^{|A|+r(A)}
x^{r(P)-r(A)-(r(A\cup B)-r(A))}
(-1)^{k|B|-r(A\cup B)+r(A)}\\[4pt]
&= \sum_{A\subseteq E(P)}\sum_{B\subseteq E(P)\setminus A}
2^{|A|}
x^{r(P)-r(A\cup B)}(-1)^{|A|+ k|B|-r(A\cup B)}.
\end{align*}

The sum runs over all pairs $(A,B)$, with $A,B \subseteq E(P)$, such that $A \cap B = \emptyset$.
By setting $C=A \cup B$, we see that this is equivalent to summing over all pairs $(A,C)$, where $A \subseteq C$ (and $B=C\setminus{A}$).
Thus
\begin{align*}
g\circ T_k(x+1,0)(P)
&= \sum_{C\subseteq E(P)}\sum_{A\subseteq C}x^{r(P)-r(C)} \,
2^{|A|}(-1)^{|A|+k|C\setminus A|-r(C)}
\\
&=\sum_{C\subseteq E(P)}
x^{r(P)-r(C)}(-1)^{k|C|-r(C)}
\sum_{A\subseteq C}2^{|A|}(-1)^{(1-k)|A|}.
\end{align*}

Since $k$ is even, $1-k$ is odd, and therefore $(-1)^{(1-k)|A|}=(-1)^{|A|}$. Thus, using the binomial theorem,  the inner sum becomes
\begin{align*}
\sum_{A\subseteq C}2^{|A|}(-1)^{|A|}=(-1)^{|C|}.
\end{align*}
We therefore have
\begin{align*}
g\circ T_k(x+1,0)(P)
&= \sum_{C\subseteq E(P)}
x^{r(P)-r(C)}\,(-1)^{k|C|-r(C)}(-1)^{|C|}.
\end{align*}

Now introduce $\eta\in\mathbb{C}$ with $\eta^k=-1$ to get
\begin{align*}
    g\circ T_k(x+1,0)(P)
&= \sum_{C\subseteq E(P)}
x^{r(P)-r(C)}\,(-1)^{k|C|-r(C)}(-1)^{|C|}\\
&= \sum_{C\subseteq E(P)}
x^{r(P)-r(C)}\,(-1)^{k|C|-r(C)}\eta^{k|C|}\\
&=\sum_{C\subseteq E(P)}
x^{r(P)-r(C)}\,(-1)^{k|C|-r(C)}\eta^{k|C|-r(C)} \eta^{r(C)}
\end{align*}

Rewrite $\eta^{r(C)}= \left(\frac{1}{\eta}\right)^{ r(C)}$ and factor out a global $\left( \frac{1}{\eta}\right) ^{-r(P)}=\eta^{r(P)}$, giving
\begin{align*}
 g\circ T_k(x+1,0)(P) &=\eta^{r(P)}\sum_{C\subseteq E(P)}
x^{r(P)-r(C)}\,(-1)^{k|C|-r(C)}\eta^{k|C|-r(C)} \left(\frac{1}{\eta}\right)^{r(P)-r(C)}\\
&=\eta^{r(P)}\sum_{C\subseteq E(P)}
\left(x \eta^{-1}\right)^{r(P)-r(C)}\,(-\eta)^{k|C|-r(C)}.\\
\end{align*}

We recognize the remaining sum as the subset expansion of the $k$-polymatroid Tutte polynomial, evaluated at $(1+x\eta^{-1} ,\,1-\eta)$. Therefore, by Proposition \ref{prop: char-tutte}, 
\begin{align*}
    g\circ T_k(x+1,0)(P)
&= \eta^{r(P)}\,T_k\bigl(P;\,1+x\eta^{-1} ,\,1-\eta \bigr)\\
&=(-1)^{r(P)}\chi(P;-x)
\end{align*}
as claimed.
\end{proof}

\section{Hypergraphs}\label{sec: Hypergraphs}
In this section, we return to hypergraphs and the hypergraph Tutte polynomial given in Definition \ref{def: HyperTutte}.

We relate hypergraphs to polymatroids in a common way and show that our deletion–contraction operations on hypergraphs are compatible with those on the associated polymatroids.
In particular, we will realize $(k+1)$-uniform hypergraphs as $k$-polymatroids, and apply the results from Section~\ref{sec: polymatroids} to the setting of uniform hypergraphs.

\subsection{Hypergraphs as Polymatroids}

Let $H=(V,E)$ be a hypergraph. 
Its \emph{associated polymatroid} $P_H$ is the polymatroid on the groundset $E$, with rank function $r(A)= v(H) - \kappa_H(A)$ for $A \subseteq E$.
It is well known that this indeed gives a polymatroid \cite{VertigPoly}.
The multiplicity of edges does not affect the rank function.
Allowing the multiplicities, in combination of the definition of deletion and contraction as in Definition~\ref{def: del/con}, allows us to state the following proposition.

\begin{proposition} \label{prop: hyper-poly-bridge}
    Hypergraphs and their associated polymatroids have compatible deletion-contraction operations, i.e.  
    $P_{H\setminus{e}}= P_H\setminus{e}$ and $P_{H/e}\cong P_H/e$, where $\cong$ denotes isomorphism of polymatroids (a groundset bijection preserving the rank function).    
    Here, hypergraph deletion and contraction are as in Definition~\ref{def: del/con}, while polymatroid deletion and contraction follow the usual definitions, as recalled in Section~\ref{sec: polymatroids}.
\end{proposition}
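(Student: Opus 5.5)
The plan is to compare rank functions directly. Throughout, $r(A)=v(H)-\kappa_H(A)$ is the rank of $P_H$, and the groundsets are matched through the natural bijection $f\mapsto f'$ between $E\setminus\{e\}$ and $E(H/e)$ from Definition~\ref{def: del/con}.

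\emph{Deletion.} This part is immediate. $H\setminus e$ has the same vertex set $V$, so $v(H\setminus e)=v(H)$. For $A\subseteq E\setminus\{e\}$, the subhypergraph $(H\setminus e)|_A=(V,A)$ coincides with $H|_A$, so $\kappa_{H\setminus e}(A)=\kappa_H(A)$. Hence $r_{P_{H\setminus e}}(A)=v(H)-\kappa_H(A)=r_{P_H\setminus e}(A)$, and the two polymatroids are literally equal.

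\emph{Contraction.} Fix $A\subseteq E\setminus\{e\}$ and let $A'=\{f':f\in A\}$. We must show
\[
v(H/e)-\kappa_{H/e}(A') \;=\; \bigl(v(H)-\kappa_H(A\cup\{e\})\bigr)-\bigl(v(H)-\kappa_H(\{e\})\bigr).
\]
We have $v(H/e)=v(H)-v(e)+1$, and $\kappa_H(\{e\})=v(H)-v(e)+1$, since $e$ spans one component on its $v(e)$ distinct vertices and every other vertex is isolated. So the right side equals $v(H)-v(e)+1-\kappa_H(A\cup\{e\})$. It therefore suffices to prove $\kappa_{H/e}(A')=\kappa_H(A\cup\{e\})$, which is the same fact used in the proof of Theorem~\ref{thm:dc-THG}.

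To prove this, define the map $\phi:V\to V'$ sending every vertex of $e$ to $v_e$ and fixing all other vertices. I claim $\phi$ induces a bijection between the components of $(V,A\cup\{e\})$ and those of $(V',A')$.

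\textbf{Step 1: $\phi$ preserves adjacency.} Two vertices lying together in some $f\in A$ map into $f'$. Two vertices lying together in $e$ both map to $v_e$. So each component of $(V,A\cup\{e\})$ maps into a single component of $(V',A')$.

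\textbf{Step 2: adjacency lifts back.} If $u',w'$ lie together in some $f'$, then we can choose preimages lying in $f$, where a preimage of $v_e$ is some vertex of $e\cap f$. Any two preimages of $v_e$ are joined through $e$. Hence preimages of connected vertices are connected in $(V,A\cup\{e\})$.

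\textbf{Step 3: bijectivity.} Since $\phi$ is surjective, Steps 1 and 2 together give a bijection on components.

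The main (mild) obstacle is bookkeeping with multisets, in particular when $f$ shares several vertices (with multiplicity) with $e$. I would handle this by working in $\mathrm{Bip}(H)$: contraction corresponds to merging the vertex $e$ with its neighbours in the bipartite graph, and this operation visibly preserves the number of components.
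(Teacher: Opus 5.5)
Your proposal is correct and follows the paper's proof. Deletion is immediate because the vertex set and the components are unchanged. For contraction you reduce, via $v(H/e)=\kappa_H(\{e\})=v(H)-v(e)+1$, to the identity $\kappa_{H/e}(A')=\kappa_H(A\cup\{e\})$; the only difference is that you actually justify this component identity through the vertex-identification map $\phi$, whereas the paper simply asserts it.
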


\begin{proof}
Let $e \in E$ and consider $H \setminus e$.
    For any $A \subseteq E \setminus{e}$ we have
    \[
        r_{P_{H\setminus e}}(A)
        = v(H\setminus e) - \kappa_{H\setminus e}(A)
        = v(H) - \kappa_H(A)
        = r_{P_H}(A)
        = r_{P_H \setminus e}(A),
    \]
    since deleting $e$ does not change the vertex set, nor the connected
    components of the subhypergraph with edge set $A$. Hence
    $P_{H\setminus e} = P_H \setminus e$.

 For contraction, recall that $H/e$ is obtained by identifying all
    vertices of $e$ to a single vertex, while retaining the incidences of the adjacent edges.
    Thus
    \[
        v(H/e) = v(H) - v(e) + 1,
    \]
    where $v(e)$ is the number of distinct vertices contained in $e$.
    Moreover, there is a natural bijection between $E(H/e)$ and $E(H) \setminus{e}$, and for any $A\subseteq E(H/e)$  we have
    \[
        \kappa_{H/e}(A) = \kappa_H(A \cup e).
    \]
    Hence for $A \subseteq E(H/e)$,
    \begin{align*}
        r_{P_{H/e}}(A)
            &= v(H/e) - \kappa_{H/e}(A) \\
            &= \bigl(v(H) - v(e) + 1\bigr) - \kappa_H(A \cup \{e\}).
    \end{align*}
    On the other hand, by the definition of polymatroid contraction,
    \begin{align*}
        r_{P_H / e}(A)
            &= r_{P_H}(A \cup e) - r_{P_H}(e) \\
            &= \bigl(v(H) - \kappa_H(A \cup e)\bigr)
               - \bigl(v(H) - \kappa_H(e)\bigr) \\
            &= \kappa_H(e) - \kappa_H(A \cup e)\\
            &= \bigl(v(H) - v(e) + 1\bigr) - \kappa_H(A \cup e).\\
    \end{align*}
    Thus, for all $A \subseteq E \setminus\{e\}$, the rank functions agree under the above
    natural bijection of ground sets, and hence $P_{H/e}\cong P_H / e$.
    This proves the proposition.

\end{proof}

\begin{corollary}
    Let $\mathcal{H}$ be a minor-closed class of hypergraphs. Then the associated class of polymatroids $\mathcal{P}_\mathcal{H}=\{P_H : H \in \mathcal{H}\}$ is again minor-closed.
\end{corollary}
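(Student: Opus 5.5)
We unpack the definition of minor-closedness on both sides and transport minors of $P_H$ back to minors of $H$ using Proposition~\ref{prop: hyper-poly-bridge}. Let $P\in\mathcal{P}_\mathcal{H}$, so $P=P_H$ for some $H\in\mathcal{H}$ (up to isomorphism, since the class is naturally considered up to polymatroid isomorphism), and let $Q$ be a minor of $P$. By definition $Q=(P_H/A)\setminus B$ for disjoint $A,B\subseteq E(H)$. We must exhibit a hypergraph $H'\in\mathcal{H}$ with $P_{H'}\cong Q$; the natural candidate is $H'=(H/A)\setminus B$, where $H/A$ means contracting the hyperedges of $A$ one at a time in some order. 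This lies in $\mathcal{H}$ because $\mathcal{H}$ is minor-closed.

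The argument is an induction on $|A|+|B|$, applying the proposition one element at a time. If $|A|+|B|=0$ there is nothing to show. Otherwise, pick $e\in A\cup B$. If $e\in B$, then Proposition~\ref{prop: hyper-poly-bridge} gives $P_{H\setminus e}=P_H\setminus e$. If $e\in A$, it gives $P_{H/e}\cong P_H/e$. In either case the remaining minor operations on $P_H$ correspond to minor operations on a polymatroid isomorphic to $P_{H_1}$, where $H_1\in\{H\setminus e, H/e\}\subseteq\mathcal{H}$. The remaining sets still have total size $|A|+|B|-1$, using the natural bijection $E(H/e)\leftrightarrow E(H)\setminus e$ to identify groundsets. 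The induction hypothesis applied to $H_1$ then finishes the argument.

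Two routine facts are needed along the way. First, polymatroid deletion and contraction of distinct elements commute and compose: $(P/A)\setminus B=(P\setminus B)/A$ and $P/(A\cup\{e\})=(P/e)/A$. The latter holds since $r_{(P/e)/A}(X)=r(X\cup A\cup e)-r(e)-(r(A\cup e)-r(e))$. Second, isomorphisms of polymatroids commute with minors, so if $P\cong P'$ via a groundset bijection $\phi$, then $P/A\cong P'/\phi(A)$ and similarly for deletion.

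The only mild obstacle is the bookkeeping of the groundset identification under contraction, since Proposition~\ref{prop: hyper-poly-bridge} gives only isomorphism rather than equality. I would handle this by stating the induction claim explicitly up to isomorphism: for every $H\in\mathcal{H}$ and every minor $Q$ of $P_H$, there is $H'\in\mathcal{H}$ with $P_{H'}\cong Q$. Accordingly, $\mathcal{P}_\mathcal{H}$ is read as closed under isomorphism.
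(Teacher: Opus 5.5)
Your proposal is correct and follows the same route as the paper. The paper states this corollary as an immediate consequence of Proposition~\ref{prop: hyper-poly-bridge} without further argument, and your element-by-element induction, the commutation of polymatroid minors, and the up-to-isomorphism reading of $\mathcal{P}_\mathcal{H}$ simply make that implicit step explicit.
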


Proposition \ref{prop: hyper-poly-bridge} is useful, as deletion-contraction properties of polymatroid invariants can be translated to the hypergraph setting.
Since our main object of interest in the polymatroid setting is our $k$-polymatroid Tutte polynomial, we remark that the polymatroids associated to $(k+1)$-uniform hypergraphs are $k$-polymatroids, as $r(e) =v(e) -1 \leq k$ for any edge of degree $k+1$, where we recall that $v(e)$ is the number of \emph{distinct} vertices in $e$. 
Usually, in the literature, the edges of hypergraphs do not allow multiplicity. 
A $(k+1)$-uniform hypergraph, when not allowing multiplicities, is described as a \emph{strict} $k$-polymatroid, meaning $r(e)=k$ for all $e \in E$. 
The problem, however, is that the class of strict $k$-polymatroids is not closed under deletion and contraction. 
By allowing multiplicities, a $(k+1)$-uniform hypergraph is just a $k-$polymatroid, with a more natural correspondence between the minors.
Therefore, in this framework, the relationship between $k$–polymatroids and $(k+1)$–uniform hypergraphs is cleaner and more direct. 
Moreover, if $H$ is $(k+1)$-uniform, then the hypergraph Tutte polynomial as in Definition \ref{def: HyperTutte} is simply the $k$-polymatroid Tutte polynomial as in Definition \ref{def: k-polyTutte} applied to the associated polymatroid of the hypergraph. 
Indeed, recall that the hypergraph Tutte polynomial as in \ref{def: HyperTutte} is given by
\begin{equation} \label{THG2}  T_{\mathrm{HG}}(H;x,y) =  \sum_{A \subseteq E(H)} (x-1)^{\kappa(A)- \kappa(H) } (y-1)^{d(A) - |A| - v(H)+\kappa(A)}.
\end{equation}
For a $(k+1)$-uniform hypergraph $H$, it holds that

\begin{align*}
    T_{\mathrm{HG}}(H;x,y) &=  \sum_{A \subseteq E(H)} (x-1)^{\kappa(A)- \kappa(H) } (y-1)^{d(A) - |A| - v(H)+\kappa(A)}\\
&=\sum_{A\subseteq E}
(x-1)^{\,r(E)-r(A)}\,(y-1)^{\,k|A|-r(A)}\\
&=T_k(P_H;x,y),
\end{align*}

where $T_k$ is as in Definition \ref{def: k-polyTutte}. This provides a direct bridge from uniform hypergraphs to the $k$-polymatroid theory, and we may therefore invoke the properties proved for $T_k$ in Section~\ref{sec: polymatroids}.

\subsection{Universality Theorem and Convolution for Uniform Hypergraphs}

For the remainder of this section, we fix an integer $k \ge 1$ and, unless stated otherwise, work with $(k+1)$-uniform hypergraphs.

In this subsection, we will give a universality theorem for $(k+1)$-uniform hypergraphs, generalizing the well-known universality and recipe theorem for graphs (see \cite{Bol98, ellis2010graph, oxley1979tutte, Wel93}), and we conclude the section by stating a convolution product formula.
We start by stating a slightly easier variation that follows directly from applying the $k$-polymatroid universality theorem from Theorem~\ref{thm: poly-universality} to $(k+1)$-uniform hypergraphs through Proposition~\ref{prop: hyper-poly-bridge}.

\begin{lemma}\label{lem: recipe-univ-hyper-simple}
Let $\mathcal{H}_{k+1}$ be the class of $(k+1)$–uniform hypergraphs, and let $\mathcal{H} \subseteq \mathcal{H}_{k+1}$ be a minor closed subclass.
Then there exists a unique map 
\[
U':\mathcal{H} \longrightarrow \mathbb{Z}[x,y,a,b],
\]
such that for every $H \in \mathcal{H}$ and every $e \in E(H)$,
\[
U'(H)=
\begin{cases}
1, 
    & \text{if }E(H)=\emptyset\text{ and }v(H)=n\\[6pt]
a^{\,k-\Delta(e)}x^{\,\Delta(e)}\,U'(H\setminus e)
  + b^{\,k-m(e)}y^{\,m(e)}\,U'(H/e),
    & \text{otherwise.}
\end{cases}
\]
Here $\Delta(e) = \kappa(H\setminus{e})-\kappa(H)$
and $m(e)=k+1-v(e)$ is the multiplicity,
where $v(e)$ is the number of distinct vertices contained in $e$.

Moreover,
    
\begin{align*}
    U'(H;x,y,a,b) &= a^{k|E|-r(E)}b^{r(E)}\THG\left(H; \frac{ x}{b}+1,\frac{y}{a}+1\right)\\
    &=
    a^{k|E|-v(H)+\kappa(H)}b^{v(H)-\kappa(H)}\THG\left(H; \frac{ x}{b}+1,\frac{y}{a}+1\right),\\
\end{align*}

where $T_{\mathrm{HG}}$ denotes the hypergraph Tutte polynomial introduced in Definition~\ref{def: HyperTutte}.
\end{lemma}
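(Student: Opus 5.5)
The plan is to transfer Theorem~\ref{thm: poly-universality} to uniform hypergraphs via the associated polymatroid, rather than redo the induction. Let $\mathcal{P}_{\mathcal H}=\{P_H : H\in\mathcal H\}$. By the Corollary following Proposition~\ref{prop: hyper-poly-bridge}, $\mathcal{P}_{\mathcal H}$ is minor closed. Since every edge of a $(k+1)$-uniform hypergraph has $r(e)=v(e)-1\le k$, it is a subclass of $\mathcal P_k$. Theorem~\ref{thm: poly-universality} therefore gives the unique map $U$ on $\mathcal P_{\mathcal H}$. We define $U'(H):=U(P_H)$ and check that it has the stated properties.

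\textbf{Step 1: the parameters match.} In $P_H$ we have
\[
r(P_H)-r(P_H\setminus e)=\bigl(v(H)-\kappa(H)\bigr)-\bigl(v(H)-\kappa(H\setminus e)\bigr)=\kappa(H\setminus e)-\kappa(H),
\]
so the polymatroid $\Delta(e)$ equals the hypergraph $\Delta(e)$. Also $r(e)=v(H)-\kappa_H(\{e\})=v(e)-1$, so the polymatroid $m(e)=k-r(e)=k+1-v(e)$, which is exactly the $m(e)$ in the statement. It also agrees with the general hypergraph $m(e)=d(e)-v(e)$, because $d(e)=k+1$.

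\textbf{Step 2: the recursion transfers.} By Proposition~\ref{prop: hyper-poly-bridge}, $P_{H\setminus e}=P_H\setminus e$ and $P_{H/e}\cong P_H/e$. The value of $U$ depends only on the isomorphism class, since uniqueness in Theorem~\ref{thm: poly-universality} forces $U$ to agree on isomorphic polymatroids (alternatively, use the closed formula \eqref{eq:univpoly}, which is manifestly isomorphism invariant). Hence the polymatroid recursion becomes the hypergraph recursion for $U'$.

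For the base case, an edgeless $H$ has $P_H=\emptyset$ regardless of $v(H)=n$, so $U'(H)=1$. Uniqueness of $U'$ follows as before: repeatedly applying deletion--contraction reduces any $H$ to edgeless hypergraphs. Hypergraph minors stay in $\mathcal H$ by assumption, and each step removes an edge.

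\textbf{Step 3: the closed formula.} We combine \eqref{eq:univpoly} with the identity $\THG(H;x,y)=T_k(P_H;x,y)$ established earlier for $(k+1)$-uniform $H$, noting $\frac{x+b}{b}=\frac{x}{b}+1$ and $\frac{y+a}{a}=\frac{y}{a}+1$. Then we substitute $r(E)=v(H)-\kappa(H)$ to get the second form of the formula.

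The only step needing care, which I expect to be the main obstacle, is justifying isomorphism invariance in Step 2 and the slightly unusual base case. The base case declares every edgeless hypergraph, of any vertex count, to have value $1$. This is consistent precisely because $P_H$ forgets isolated vertices.
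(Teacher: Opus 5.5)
Your proposal is correct and follows the paper's route. The paper simply says the lemma follows directly from Theorem~\ref{thm: poly-universality} applied through Proposition~\ref{prop: hyper-poly-bridge}, and your steps supply exactly the details that sentence leaves implicit: matching $\Delta(e)$ and $m(e)$ with the polymatroid parameters, transferring the recursion via isomorphism invariance (the closed formula \eqref{eq:univpoly} is the cleaner of your two justifications), the edgeless base case, and using $\THG(H;x,y)=T_k(P_H;x,y)$ together with $r(E)=v(H)-\kappa(H)$.
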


\begin{remark}\label{rem: multip}
    From Corollary~\ref{cor: prod-universal}, it follows that $U'$ is multiplicative over disjoint unions and vertex joins.
\end{remark}

Lemma~\ref{lem: recipe-univ-hyper-simple} covers the case for invariants  that evaluate to $1$ on every edgeless hypergraph.
To generalize the universality theorem for graphs, we have to consider invariants that allow arbitrary values for the singleton hypergraph.
We now state the slightly more general version of the universality theorem for $(k+1)$-uniform hypergraphs.

\begin{theorem}[Universality]\label{thm: univers-unif-hyper}

Let $\mathcal{H}_{k+1}$ be the class of $(k+1)$–uniform hypergraphs, and let $\mathcal{H} \subseteq \mathcal{H}_{k+1}$ be a minor closed subclass.
Then there exists a unique map 
\[
U:\mathcal{H} \longrightarrow \mathbb{Z}[x,y,\alpha,a,b]
\]
such that for every $H \in \mathcal{H}$ and every $e \in E(H) $
\[
U(H)=
\begin{cases}

\alpha^{\,n}, 
    & \text{if }E(H)=\emptyset\text{ and }v(H)=n\\[6pt]
a^{\,k-\Delta(e)}x^{\,\Delta(e)}\,U(H\setminus e)
  + b^{\,k-m(e)}y^{\,m(e)}\,U(H/e),
    & \text{otherwise.}
\end{cases}
\]
Here $\Delta(e) = \kappa(H\setminus{e})-\kappa(H)$
and $m(e)=k+1-v(e)$ is the multiplicity  ,
where $v(e)$ is the number of distinct vertices contained in $e$.

Moreover,
\begin{equation}\label{eq:univ-hyper}
    U(H;x,y,\alpha,a,b) =\alpha^{\kappa(H)}a^{k|E|-v(H)+\kappa(H)}b^{v(H)-\kappa(H)}\THG\left(H; \frac{\alpha x}{b}+1,\frac{y}{a}+1\right),
\end{equation}

where $T_{\mathrm{HG}}$ denotes the hypergraph Tutte polynomial introduced in Definition~\ref{def: HyperTutte}.
\end{theorem}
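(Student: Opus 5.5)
Uniqueness goes exactly as in Theorem~\ref{thm: poly-universality}. Contraction and deletion each remove one edge, and the class $\mathcal H$ is minor closed. So repeatedly applying the recursion reduces $U(H)$ to a combination of values on edgeless hypergraphs, and those values are prescribed as $\alpha^{n}$. Hence at most one such map exists.

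For existence, define $\hat U(H)$ to be the right-hand side of \eqref{eq:univ-hyper} and check that it satisfies both rules. On an edgeless hypergraph with $n$ vertices we have $\kappa(H)=n$, $|E|=0$, and $\THG=1$, since only $A=\emptyset$ contributes and its exponents are $0$. This gives $\hat U(H)=\alpha^n a^{0}b^{0}=\alpha^n$.

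For a nonempty $H$, fix $e$ and apply Theorem~\ref{thm:dc-THG} at $X=\frac{\alpha x}{b}+1$, $Y=\frac{y}{a}+1$:
\[
\THG(H;X,Y)=\Big(\tfrac{\alpha x}{b}\Big)^{\Delta(e)}\THG(H\setminus e;X,Y)+\Big(\tfrac{y}{a}\Big)^{m(e)}\THG(H/e;X,Y).
\]
Here $m(e)=d(e)-v(e)=k+1-v(e)$ agrees with the statement. Write $\gamma(H)=\alpha^{\kappa(H)}a^{k|E|-v(H)+\kappa(H)}b^{v(H)-\kappa(H)}$, and compute the two prefactor ratios.

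\emph{Deletion.} Here $|E|$ drops by $1$, $v$ is unchanged, and $\kappa$ increases by $\Delta(e)$. So $\gamma(H)/\gamma(H\setminus e)=\alpha^{-\Delta(e)}a^{k-\Delta(e)}b^{\Delta(e)}$. Multiplying by $(\alpha x/b)^{\Delta(e)}$ gives $a^{k-\Delta(e)}x^{\Delta(e)}$, as required.

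\emph{Contraction.} Here $\kappa(H/e)=\kappa(H)$, $v(H/e)=v(H)-v(e)+1$, and $|E|$ drops by $1$. Thus $\gamma(H)/\gamma(H/e)=a^{k-(v(e)-1)}b^{v(e)-1}=a^{m(e)}b^{k-m(e)}$, using $v(e)-1=k-m(e)$. Multiplying by $(y/a)^{m(e)}$ gives $b^{k-m(e)}y^{m(e)}$, as required.

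So $\hat U$ obeys the recursion, and by uniqueness $U=\hat U$. Alternatively, one could derive the result from Lemma~\ref{lem: recipe-univ-hyper-simple} by noting that the $\alpha$-dependence enters only through $\alpha^{\kappa}$ and the rescaled variable $x\mapsto \alpha x$. The direct verification is cleaner, though.

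The one delicate point is checking that the $\alpha$-powers cancel: $\alpha^{\kappa}$ changes only under deletion, by exactly $\Delta(e)$, which matches the $\alpha^{\Delta(e)}$ contributed by the substitution. Everything else is bookkeeping already done in Theorem~\ref{thm: poly-universality}. Finally, $U$ lands in the polynomial ring $\mathbb{Z}[x,y,\alpha,a,b]$: each step of the recursion has nonnegative exponents ($0\le\Delta(e),m(e)\le k$), and it starts from the monomials $\alpha^n$.
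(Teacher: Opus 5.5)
Your proof is correct, but it takes a different route from the paper's. The paper does not verify the recursion for the closed form directly. It takes the map $U'$ of Lemma~\ref{lem: recipe-univ-hyper-simple}, which is itself inherited from the polymatroid universality Theorem~\ref{thm: poly-universality} via $\THG(H;x,y)=T_k(P_H;x,y)$ and Proposition~\ref{prop: hyper-poly-bridge}. It then writes $U(H;x,y,\alpha,a,b)=\alpha^{\kappa(H)}U'(H;\alpha x,y,a,b)$ and transfers the recursion of $U'$ to $U$ using $\kappa(H/e)=\kappa(H)$ and $\kappa(H\setminus e)=\kappa(H)+\Delta(e)$. This is essentially the alternative you mention in passing.

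You instead substitute $X=\frac{\alpha x}{b}+1$, $Y=\frac{y}{a}+1$ directly into the hypergraph deletion--contraction Theorem~\ref{thm:dc-THG}. You then compute the ratios $\gamma(H)/\gamma(H\setminus e)=\alpha^{-\Delta(e)}a^{k-\Delta(e)}b^{\Delta(e)}$ and $\gamma(H)/\gamma(H/e)=a^{m(e)}b^{k-m(e)}$, and both are right.

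\textbf{What the paper's route gives.} It presents the hypergraph statement as a shadow of the $k$-polymatroid one. In that view, $\alpha$ enters only through the rescaling $x\mapsto\alpha x$ and the factor $\alpha^{\kappa(H)}$. The paper reuses exactly this structure afterwards to get multiplicativity from Corollary~\ref{cor: prod-universal}.

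\textbf{What your route gives.} It is self-contained at the hypergraph level: no associated polymatroid and no minor-compatibility proposition are needed, and $\alpha$ is handled in one pass. Your closing remark on polynomiality covers a point the paper leaves implicit. There, $\Delta(e)\le v(e)-1\le k$ holds because removing the node $e$ from $\mathrm{Bip}(H)$ splits its component into at most $v(e)$ pieces. Your computation also shows that uniformity is used only to make the exponent $k$ the same for every edge. If you replace $k|E|$ by $d(E)-|E|$ in $\gamma$, the identical computation gives the recursion with $k$ replaced by $d(e)-1$ on arbitrary hypergraphs.
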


\begin{proof}

    Similarly to the polymatroid case, uniqueness follows because,  given $e\in E(H)$, $U(H)$ is completely determined by $U(H\setminus{e})$ and $U(H/e)$. 
     It remains to show that the function $U$, given by the right hand side of Equation \ref{eq:univ-hyper} satisfies the reduction formula given in the theorem.
     This will in particular show that $U$ is well-defined.

 Let $U'$ be the map given by Lemma~\ref{lem: recipe-univ-hyper-simple}. 
    We see that
    \begin{align*}
        U(H;x,y,\alpha,a,b) = \alpha^{\kappa(H)}U'(H;\alpha x, y, a, b).
    \end{align*}
   For the empty hypergraph $H$ on $n$ vertices, we quickly verify that indeed, $ U(H;x,y,\alpha,a,b)  = \alpha^n$.
     Now, let the edge set of $H$ be non-empty, and let $e \in E(H)$. 
    Applying Lemma~\ref{lem: recipe-univ-hyper-simple} gives us
    \begin{align*}
         U(H;x,y,\alpha,a,b) =& \alpha^{\kappa(H)}U'(H;\alpha x, y, a, b)\\
         =&\alpha^{\kappa(H)}a^{\,k-\Delta(e)}\alpha^{\Delta(e)}x^{\,\Delta(e)}\,U'(H\setminus e;\alpha x, y, a, b)\\
  &+ \alpha^{\kappa(H)}b^{\,k-m(e)}y^{\,m(e)}\,U'(H/e;\alpha x, y, a, b)\\
  =&\alpha^{\kappa(H)}a^{\,k-\Delta(e)}\alpha^{\Delta(e)}x^{\,\Delta(e)}\,\alpha^{-\kappa(H\setminus e)} U(H\setminus{e};x,y,\alpha,a,b)\\  &+ \alpha^{\kappa(H)} b^{\,k-m(e)}y^{\,m(e)}\, \alpha^{-\kappa(H/e)} U(H/e;x,y,\alpha,a,b).
    \end{align*}

Now, remarking that $\kappa(H)=\kappa(H/e)$ and $\Delta(e)=\kappa(H\setminus{e})-\kappa(H)$, we get
\[
U(H;x,y,\alpha,a,b)=a^{\,k-\Delta(e)}x^{\,\Delta(e)}\,U(H\setminus{e};x,y,\alpha,a,b) +  b^{\,k-m(e)}y^{\,m(e)}\, U(H/e;x,y,\alpha,a,b),
\]
as desired.

\end{proof}

\begin{remark}
As in the polymatroid setting, Theorem~\ref{thm: univers-unif-hyper} may also be read as the corresponding recipe statement.
More precisely, once the parameters $x,y,\alpha,a,b$ are fixed, together with the reduction rule above and the prescribed values $U(H)=\alpha^n$ on edgeless hypergraphs with $n$ vertices, the invariant is uniquely determined.
Equation~\eqref{eq:univ-hyper} then gives the resulting specialization of $\THG$.
\end{remark}

We will now state a few corollaries of  Theorem~\ref{thm: univers-unif-hyper}.
\begin{corollary}
Let $U$ be the map of Theorem~\ref{thm: univers-unif-hyper}. 
Then, for $\alpha=a=b=1$,
\[
U(H;x,y,1,1,1)=T_{\mathrm{HG}}(H;x+1,y+1).
\]
Equivalently, $T_{\mathrm{HG}}(H;x+1,y+1)$ is the unique map satisfying the reduction formulas of Theorem~\ref{thm: univers-unif-hyper} with $(\alpha,a,b)=(1,1,1)$.
\end{corollary}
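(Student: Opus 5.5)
This is a direct substitution into Equation~\eqref{eq:univ-hyper} of Theorem~\ref{thm: univers-unif-hyper}. Setting $\alpha=a=b=1$ there, every prefactor $\alpha^{\kappa(H)}$, $a^{k|E|-v(H)+\kappa(H)}$ and $b^{v(H)-\kappa(H)}$ becomes $1$. The arguments $\frac{\alpha x}{b}+1$ and $\frac{y}{a}+1$ become $x+1$ and $y+1$. This already gives $U(H;x,y,1,1,1)=\THG(H;x+1,y+1)$. One point needs a check: the exponents of $a$ and $b$ must be nonnegative integers, so that specializing the polynomial identity in $\mathbb{Z}[x,y,\alpha,a,b]$ is legitimate. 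This holds because $v(H)-\kappa(H)=r(E)\ge 0$ and $k|E|-r(E)\ge 0$, since $r(e)\le k$ for a $(k+1)$-uniform hypergraph and by submodularity.

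For the ``equivalently'' clause, I will observe that the reduction rules of Theorem~\ref{thm: univers-unif-hyper} specialize at $(\alpha,a,b)=(1,1,1)$ to the following. The value is $1$ on edgeless hypergraphs, and otherwise
\[
F(H)=x^{\Delta(e)}F(H\setminus e)+y^{m(e)}F(H/e).
\]
The uniqueness argument in the proof of Theorem~\ref{thm: univers-unif-hyper} uses only that $F(H)$ is determined by $F(H\setminus e)$ and $F(H/e)$, with induction on $|E(H)|$ down to edgeless hypergraphs. That argument works for any fixed parameter values, so it gives uniqueness of a map satisfying the specialized rules. Existence follows from the first part.

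Alternatively, one can confirm directly that $\THG(H;x+1,y+1)$ satisfies these rules. By Theorem~\ref{thm:dc-THG} at $(x+1,y+1)$, the recursion holds. On an edgeless hypergraph, the only subset is $A=\emptyset$, with exponents $\kappa(\emptyset)-\kappa(H)=0$ and $0-0-v(H)+v(H)=0$, so the value is $1$. One detail is that the paper's $m(e)$ is written as $k+1-v(e)$, which equals $d(e)-v(e)$ for $(k+1)$-uniform $H$. It therefore agrees with the $m(e)$ of Theorem~\ref{thm:dc-THG}.

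I expect no real obstacle. The only care needed is the nonnegativity of exponents and matching the two definitions of $m(e)$.
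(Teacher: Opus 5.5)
Your proposal is correct and matches the paper, which treats this corollary as an immediate specialization of Equation~\eqref{eq:univ-hyper} at $\alpha=a=b=1$; your direct check via Theorem~\ref{thm:dc-THG}, the edgeless case, and the identification $m(e)=k+1-v(e)=d(e)-v(e)$ is a valid independent confirmation. One small remark: the exponents in the prefactors are indeed nonnegative, but that is not what justifies the substitution. The right-hand side is a Laurent polynomial whose only denominators are monomials in $a$ and $b$, and these do not vanish at $a=b=1$.
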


\begin{corollary}
Let $U$ be the map of Theorem~\ref{thm: univers-unif-hyper}. 
Then $U$ is multiplicative with respect to disjoint unions, that is,
\[
U(H_1 \cup H_2;x,y,\alpha,a,b)
= U(H_1;x,y,\alpha,a,b)\,U(H_2;x,y,\alpha,a,b)
\]
for all disjoint hypergraphs $H_1,H_2$. 

If $\alpha=1$, then $U$ is also multiplicative with respect to one-vertex joins, that is,
\[
U(H_1 * H_2;x,y,1,a,b)
= U(H_1;x,y,1,a,b)\,U(H_2;x,y,1,a,b).
\]
\end{corollary}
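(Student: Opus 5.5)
The plan is to work directly from the closed form \eqref{eq:univ-hyper} of Theorem~\ref{thm: univers-unif-hyper},
\[
U(H)=\alpha^{\kappa(H)}a^{k|E|-v(H)+\kappa(H)}b^{v(H)-\kappa(H)}\THG\!\left(H;\tfrac{\alpha x}{b}+1,\tfrac{y}{a}+1\right).
\]
The idea is to split $U(H)$ into a prefactor and a $\THG$-factor and show that each one factors over the two operations. The $\THG$-factor is already handled by the earlier proposition on multiplicativity of $\THG$, part (i) for disjoint unions and part (ii) for vertex joins. That proposition holds for arbitrary hypergraphs, so in particular it applies in the $(k+1)$-uniform setting. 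It therefore only remains to treat the prefactor.

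\emph{Disjoint unions.} Let $H=H_1\cup H_2$. The parameters are additive:
\[
|E(H)|=|E(H_1)|+|E(H_2)|,\qquad v(H)=v(H_1)+v(H_2),\qquad \kappa(H)=\kappa(H_1)+\kappa(H_2).
\]
Hence each exponent in $\alpha^{\kappa}a^{k|E|-v+\kappa}b^{v-\kappa}$ is additive, and the prefactor factors as the product of the prefactors of $H_1$ and $H_2$. Combined with multiplicativity of $\THG$ at the same evaluation point $\bigl(\tfrac{\alpha x}{b}+1,\tfrac{y}{a}+1\bigr)$, this gives the first identity.

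\emph{One-vertex joins with $\alpha=1$.} Let $H=H_1*H_2$. As in the proof of part (ii) of the earlier proposition,
\[
v(H)=v(H_1)+v(H_2)-1,\qquad \kappa(H)=\kappa(H_1)+\kappa(H_2)-1,\qquad |E(H)|=|E(H_1)|+|E(H_2)|.
\]
The combination $v-\kappa$ is therefore additive, and so are the $a$- and $b$-exponents. The $\alpha$-exponent $\kappa$ is not additive: it drops by one. This is exactly why the hypothesis $\alpha=1$ is needed, since setting $\alpha=1$ kills that factor. The evaluation point then becomes $(x/b+1,\,y/a+1)$ on both sides, and part (ii) finishes the argument.

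Alternatively, the $\alpha=1$ case can be read off from Remark~\ref{rem: multip}, since $U(H;x,y,1,a,b)=U'(H;x,y,a,b)$. I expect no real obstacle here. The only point needing care is the bookkeeping of the $-1$ corrections in the vertex join, and in particular checking that the $\alpha$-exponent is the one term that fails to be additive.
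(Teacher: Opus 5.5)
Your proof is correct, and it differs from the paper's only in which multiplicativity result does the work.

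The paper writes $U(H;x,y,\alpha,a,b)=\alpha^{\kappa(H)}U'(H;\alpha x,y,a,b)$ and takes multiplicativity of $U'$ over disjoint unions and vertex joins from Remark~\ref{rem: multip}. That remark rests on Corollary~\ref{cor: prod-universal} for direct sums of polymatroids, and implicitly uses that the rank function $v(H)-\kappa_H(A)$ of both $H_1\cup H_2$ and $H_1*H_2$ splits as $r_{H_1}(A_1)+r_{H_2}(A_2)$. With that in hand, only the factor $\alpha^{\kappa}$ needs inspecting, and for $\alpha=1$ one simply has $U=U'$.

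You instead stay at the hypergraph level. You split off the whole prefactor $\alpha^{\kappa}a^{k|E|-v+\kappa}b^{v-\kappa}$, check by hand that its exponents are additive, and invoke the multiplicativity proposition for $\THG$ from Section~\ref{sec: hypertutte}. That proposition is proved explicitly there and holds for arbitrary hypergraphs.

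Your route buys two things. First, it is more self-contained, since the remark the paper relies on is stated without proof. Second, it pins down the obstruction exactly: for general $\alpha$ your bookkeeping gives $U(H_1*H_2)=\alpha^{-1}U(H_1)U(H_2)$. The evaluation point $\bigl(\tfrac{\alpha x}{b}+1,\tfrac{y}{a}+1\bigr)$ is the same on both sides for any $\alpha$, so setting $\alpha=1$ is needed only to remove this prefactor discrepancy, not to align evaluation points as your wording suggests. The paper's route is shorter, given the polymatroid machinery already in place.
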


\begin{proof}
Let $U'$ be as in Lemma~\ref{lem: recipe-univ-hyper-simple}, which is multiplicative with respect to disjoint unions and vertex joins, as mentioned in Remark~\ref{rem: multip}. 
For disjoint $(k+1)$-uniform hypergraphs $H_1$ and $H_2$ we have
\begin{align*}
    U(H_1 \cup H_2;x,y, \alpha, a,b) &= \alpha^{ \kappa(H_1 \cup H_2)} U'(H_1 \cup H_2; \alpha x, y,a,b)\\
    & =\alpha^{\kappa(H_1)}U'(H_1; \alpha x, y,a,b)\alpha^{\kappa(H_2)}U'(H_2; \alpha x, y,a,b)\\
    &=U(H_1 ;x,y, \alpha, a,b)U(H_2;x,y, \alpha, a,b).
\end{align*}

If $\alpha =1$, then $U$ and $U'$ agree on all $(k+1)$-uniform hypergraphs, and hence $U$ is multiplicative over vertex-joins. 
\end{proof}
 A hyperedge of degree $k+1$ is called a $k$-bridge if $\Delta(e)=k$, and a $k$-loop if $m(e)=k$.
\begin{corollary}\label{cor: bridges-loops}
For the map $U$ from Theorem~\ref{thm: univers-unif-hyper}, it holds that
    \[
U(H)=
\begin{cases}
\left(x^{k}+\left(\frac{b}{\alpha}\right)^{k}\right)\,U(H\setminus e),
    & \text{if $e$ is of type $(k,0)$ (a $k$–bridge),}\\[6pt]
\left(y^{k}+a^{k}\right)\,U(H/e), 
   & \text{if $e$ is of type $(0,k)$ (a $k$–loop).}\\[6pt]
\end{cases}
\]
\end{corollary}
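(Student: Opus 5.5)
The plan is to apply the reduction formula of Theorem~\ref{thm: univers-unif-hyper} to the special edge $e$. In each case we then show that the minor $H/e$ relates to $H\setminus e$ in a controlled way, so that both terms can be collected into a single term.

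\emph{The $k$-loop case.} Suppose $e$ is of type $(0,k)$. Then $m(e)=k$ means $k+1-v(e)=k$, so $v(e)=1$ and $e$ consists of a single vertex $v$ repeated $k+1$ times. Contracting $e$ identifies only the vertex $v$ with itself, renamed $v_e$. The other edges are unchanged up to this renaming, so $H/e$ equals $H\setminus e$ up to the name of one vertex. Since $U$ is an invariant and, by \eqref{eq:univ-hyper}, is given by a formula in $\THG$, $\kappa$, $v$ and $|E|$, we get $U(H/e)=U(H\setminus e)$. The reduction with $\Delta(e)=0$ and $m(e)=k$ then reads
\[
U(H)=a^{k}\,U(H\setminus e)+y^{k}\,U(H/e)=(y^k+a^k)\,U(H/e).
\]

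\emph{The $k$-bridge case.} Suppose $e$ is of type $(k,0)$. Then $m(e)=0$ gives $v(e)=k+1$. Together with $\Delta(e)=k$, this means the $k+1$ distinct vertices of $e$ lie in $k+1$ distinct components of $H\setminus e$. The reduction gives $U(H)=x^k U(H\setminus e)+b^k U(H/e)$, so it suffices to show
\[
U(H/e)=\alpha^{-k}U(H\setminus e).
\]
This is the main step. Observe that $H/e$ is obtained from $H\setminus e$ by identifying one vertex from each of $k+1$ different components. This is an iterated one-vertex join: glue the components one at a time at the chosen vertices. Applying part~(ii) of the multiplicativity proposition for $\THG$ repeatedly, together with part~(i) for the untouched components, gives
\[
\THG(H/e)=\THG(H\setminus e).
\]

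Next we compare the prefactors in \eqref{eq:univ-hyper}. We have $|E(H/e)|=|E(H\setminus e)|$, $v(H/e)=v(H\setminus e)-k$ and $\kappa(H/e)=\kappa(H\setminus e)-k$. Hence the exponents $k|E|-v+\kappa$ of $a$ and $v-\kappa$ of $b$ are unchanged, while the exponent of $\alpha$ drops by $k$. The evaluation point $(\alpha x/b+1,\,y/a+1)$ is the same for both minors. This yields $U(H/e)=\alpha^{-k}U(H\setminus e)$, and therefore
\[
U(H)=\bigl(x^k+(b/\alpha)^k\bigr)\,U(H\setminus e).
\]
The $\alpha^{-k}$ is harmless as a formal identity, read over $\mathbb{Z}[x,y,\alpha^{\pm1},a,b]$ or after multiplying through.

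The only real obstacle is justifying that contracting a $k$-bridge is an iterated vertex join that preserves $\THG$, and correctly tracking the changes in $v$ and $\kappa$. Everything else follows directly from the reduction formula.
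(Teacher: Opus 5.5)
Your proof is correct. The $k$-loop case matches the paper's; you are slightly more careful in noting that $H/e$ is $H\setminus e$ with one vertex renamed, rather than literally equal.

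For the $k$-bridge case you reach the key identity $U(H/e)=\alpha^{-k}U(H\setminus e)$ by a different route.

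\textbf{The paper's route.} It argues through polymatroids. A $k$-bridge is a $k$-coloop of $P_H$, so the earlier lemma gives $P_H\setminus e=P_H/e$, and minor compatibility then gives $P_{H\setminus e}\cong P_{H/e}$. The auxiliary map $U'$ of Lemma~\ref{lem: recipe-univ-hyper-simple} depends only on the associated polymatroid, so $U'(H\setminus e)=U'(H/e)$. The only remaining discrepancy is the factor $\alpha^{\kappa}$, with $\kappa(H\setminus e)-\kappa(H/e)=k$.

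\textbf{Your route.} You stay on the hypergraph side. You observe that $\Delta(e)=k$ forces the $k+1$ distinct vertices of $e$ into $k+1$ distinct components of $H\setminus e$. Hence $H/e$ is an iterated one-vertex join of those components, in disjoint union with the rest. Multiplicativity under disjoint unions and vertex joins then gives $\THG(H/e)=\THG(H\setminus e)$, and you track $v$, $\kappa$ and $|E|$ in \eqref{eq:univ-hyper} by hand.

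\textbf{What each buys.} The paper's argument is shorter given the polymatroid lemma already in place, and it explains the phenomenon as the general fact that deleting and contracting a $k$-coloop give the same minor. Yours is self-contained at the hypergraph level and gives an explicit picture of what contracting a $k$-bridge does. Its key step $\THG(H/e)=\THG(H\setminus e)$ uses neither uniformity nor the identification $\THG=T_k(P_H)$. So it holds, in an arbitrary hypergraph, for any hyperedge whose distinct vertices lie in distinct components of $H\setminus e$.
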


\begin{proof}
    If  $e$ is a $k$-bridge, then $H/e$ and $H \setminus{e}$ have the same associated polymatroid, therefore $U'(H/e)=U '(H\setminus{e})$, where $U'$ is as in Lemma~\ref{lem: recipe-univ-hyper-simple}.
    So, we have that
    \begin{align*}
        U(H\setminus{e};x,y,\alpha,a,b) &= \alpha^{\kappa(H\setminus{e})}U'(H\setminus{e};\alpha x,y,a,b)\\
        & =\alpha^{\kappa(H\setminus{e})}U'(H/e;\alpha x,y,a,b)\\
        &=\alpha^{\kappa(H\setminus{e})}\alpha^{-\kappa(H/e)}U(H/e;x,y, \alpha,a,b).
    \end{align*}
    As $\kappa(H/e)= \kappa(H)$, and $\kappa(H\setminus{e})-\kappa(H)=k$ in this case,
    we get
    \[
     U(H\setminus{e};x,y,\alpha,a,b) = \alpha^{k}U(H/e;x,y,\alpha,a,b).
    \]
    Moreover, for a $k$-bridge, we have $m(e)=0$, so by Theorem~\ref{thm: univers-unif-hyper} we get

    \begin{align*}
        U(H) &= a^{0}x^k U(H\setminus{e}) + b^k y^0 \alpha^{-k}U(H \setminus{e})\\
        &= \left(x^k  + \left(\frac{b}{\alpha}\right)^k \right) U(H \setminus{e}).
    \end{align*}

    If $e$ is a $k$-loop, then $H/e = H\setminus{e}$. 
    Moreover, $\Delta(e)=0$.  
    So in this case we immediately get from Theorem~\ref{thm: univers-unif-hyper} 
    \begin{align*}
        U(H) &= a^{k}x^0 U(H/e) + b^0 y^k U(H /e)\\
        &= \left( y^k +a^k\right) U(H/ e).
    \end{align*}
\end{proof}

We conclude this section with a convolution theorem. 
The convolution product for the $k$-polymatroid Tutte polynomial in Theorem~\ref{thm: conv} gives us directly a convolution product for our hypergraph Tutte polynomial in the uniform setting.  

For a $(k+1)$-uniform hypergraph $H$, define
\[
\chi(H;x)=\sum\limits_{B \subseteq E}(-1)^{|B|} x^{\kappa(B)-\kappa(H)},
\qquad
\chi^*(H;x)=\sum\limits_{B\subseteq E}(-1)^{|B|}\,x^{\,k(|E|-|B|)-v(H)+\kappa(E\setminus B)}.
\]

\begin{theorem}\label{thm: conv hyper}
Let $k \in \mathbb{N}$ and let $H$ be a $(k+1)$-uniform hypergraph. 
    \begin{enumerate}[i.]
        \item If $k$ is odd, then the hypergraph Tutte polynomial satisfies 
        \begin{align*}
            \THG(H;x+1,y+1)= \displaystyle\sum\limits_{A \subseteq E} \THG(H|_A;0,y+1) \THG(H/A;x+1,0) .
        \end{align*}

        \item If $k$ is even, then the hypergraph Tutte polynomial satisfies both 
        \begin{align*}
            \THG(H;x+1,y+1)&=\displaystyle\sum\limits_{A \subseteq E}(-1)^{v(H) - \kappa_H(A) }\chi^*(H|_A;-y)\THG(H/A;x+1,0),
        \end{align*}
        and  
        \begin{align*}
            \THG(H;x+1,y+1)&=\displaystyle\sum\limits_{A \subseteq E}(-1)^{v(H/A) - \kappa(H/A) }\THG(H|_A;0,y+1)  \chi(H/A;-x).
        \end{align*}
    \end{enumerate}
\end{theorem}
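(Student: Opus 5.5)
The plan is to deduce Theorem~\ref{thm: conv hyper} directly from Theorem~\ref{thm: conv}, applied to the associated $k$-polymatroid $P_H$. Three dictionary facts are needed first.

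First, every minor of a $(k+1)$-uniform hypergraph is again $(k+1)$-uniform, since contraction preserves degrees. Hence for each $A\subseteq E$ both $H|_A$ and $H/A$ are $(k+1)$-uniform. By the computation preceding this subsection,
\[
\THG(H|_A;x,y)=T_k(P_{H|_A};x,y)\quad\text{and}\quad \THG(H/A;x,y)=T_k(P_{H/A};x,y).
\]

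Second, we identify the minors. Clearly $P_{H|_A}=P_H|_A$, because restriction only deletes edges and the vertex set is unchanged. For $H/A$ we iterate Proposition~\ref{prop: hyper-poly-bridge} one edge at a time, using that contracting a set means contracting its elements successively and that the order does not matter in either setting. This gives $P_{H/A}\cong P_H/A$. Since $T_k$ and $\chi$ are isomorphism invariants, every term of Theorem~\ref{thm: conv} can be rewritten in hypergraph language.

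Third, we match signs and characteristic polynomials.
\begin{itemize}
\item For the signs, $r(P_H|_A)=r_{P_H}(A)=v(H)-\kappa_H(A)$, and $r(P_H/A)\cong r(P_{H/A})=v(H/A)-\kappa(H/A)$. These are exactly the exponents of $(-1)$ in the statement.
\item For $\chi(P_{H/A};-x)$, take any $(k+1)$-uniform hypergraph $G$ with polymatroid rank $r(B)=v(G)-\kappa(B)$. Then $r(G)-r(B)=\kappa(B)-\kappa(G)$, so $\chi(P_G;x)=\chi(G;x)$ as defined before the theorem. Apply this with $G=H/A$.
\item For the dual, let $G=H|_A$ with edge set $A$. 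Then $r^*(B)=k|B|+r(A\setminus B)-r(A)$, which gives
\[
r^*(A)-r^*(B)=k(|A|-|B|)-r(A\setminus B)=k(|A|-|B|)-v(H)+\kappa(A\setminus B).
\]
The right-hand side is exactly the exponent in $\chi^*(H|_A;x)$, read on $H|_A$ with ground set $A$ and $v(H|_A)=v(H)$. Hence $\chi\bigl((P_H|_A)^*;-y\bigr)=\chi^*(H|_A;-y)$.
\end{itemize}

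With these facts, substitute into the three identities of Theorem~\ref{thm: conv}, taking the odd case and the two even cases in turn. Each summand translates termwise, which yields i.\ and both formulas in ii.

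The only step that needs care is the iterated contraction $P_{H/A}\cong P_H/A$. In particular, we must check that $\kappa_{H/A}(B)=\kappa_H(A\cup B)$ and $v(H/A)=v(H)-v(A)+\kappa(A)$ for whole sets $A$, where $v(A)$ counts the distinct vertices covered by $A$. Alternatively, we can just induct on $|A|$ using Proposition~\ref{prop: hyper-poly-bridge} together with $(P/e)/f=P/\{e,f\}$. The remaining work is the bookkeeping of the dual-rank exponent in the third fact, which is a direct substitution.
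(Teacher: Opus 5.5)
Your proposal is correct and is essentially the paper's own argument. The paper states that Theorem~\ref{thm: conv hyper} follows directly from Theorem~\ref{thm: conv} applied to $P_H$, via $\THG(H)=T_k(P_H)$ and Proposition~\ref{prop: hyper-poly-bridge}. Your matching of the signs, of $\chi$, and of $\chi^*(H|_A;\cdot)$ with $\chi\bigl((P_H|_A)^*;\cdot\bigr)$ makes explicit what the paper leaves implicit.

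One small slip in your side remark. Under the paper's convention $\kappa(A)=\kappa_H(A)$, which counts isolated vertices, one has $v(H/A)=\kappa_H(A)$. Your formula $v(H)-v(A)+\kappa(A)$ is therefore only correct if $\kappa(A)$ counts components of the sub-hypergraph on the covered vertices alone. The induction on $|A|$ via Proposition~\ref{prop: hyper-poly-bridge}, which you also suggest, avoids the issue entirely.
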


\section{Potts Model and Random Cluster Model}\label{sec: potts}
A compelling property of the Tutte polynomial is its relevance in statistical mechanics through its connection with the Potts and random cluster model~\cite{BE-MPS10, WM00}.
The partition function of the $q$-state Potts model on a graph $G=(V,E)$ can be written in the
Fortuin--Kasteleyn form as
\[
Z(G;q,w) = \sum_{A \subseteq E} q^{\kappa(A)}\,(w-1)^{|A|}. 
\]
 
This expression is well-defined, even if $q$ is not an integer.
If $q$ is a positive integer, an equivalent expression is given by the spin-sum representation
\[
 \sum_{\phi: V \to [q]} \prod_{\substack{\{a,b\}\in E(G)\\ \phi(a)=\phi(b)}} w. 
\]
 The Potts model partition function can also be expressed as
\[
Z(G;q,w)  =q^{\kappa(G)}(w-1)^{|V(G)|- \kappa(G) }T\left(G; \frac{q+w-1}{w-1}, w \right),
\]
where $T$ is the Tutte polynomial for graphs.
The partition function for the random cluster model is given by the transformation $w-1 \mapsto v$.

The main goal of this section is to highlight the relevance of the newly introduced hypergraph Tutte polynomial to known and newly introduced hypergraph extensions of the Potts and random cluster model.
We will see that different hypergraph extensions of the same model, distinguished primarily by how they incorporate hyperedge degrees, lead to non-equivalent models in the sense that the corresponding partition functions do not specialize to one another.
Moreover, in the `degree dependent' extensions that we will introduce in this section, the equivalence between the random cluster model and the Potts model ceases to exist, and only the partition function of the former model will be equivalent to the hypergraph Tutte polynomial.
However, when we restrict ourselves to the subclass of uniform hypergraphs, the hyperedge degree will be a constant, and all of the partition functions that we introduce will be equivalent. 
In this uniform case, we may apply the universality theorem (Theorem~\ref{thm: univers-unif-hyper}) to these partition functions, since they satisfy the required conditions in that case.

\subsection{Hypergraph Extensions of the Potts and Random Cluster Model}
In this subsection, we introduce three hypergraph partition functions arising from natural hypergraph extensions of the Potts and random cluster models. 
As we will see, these partition functions are not equivalent in general, so we state them separately.
We begin with the partition function coming from Grimmett's many-body Potts model. 

\begin{definition}[Grimmett hypergraph Potts partition function \cite{grimmett1994potts}]\label{def:HyperPottsGrim}
      For a hypergraph $H$, the \emph{Grimmett hypergraph Potts partition function} is defined as
\[
\ZGr(H;q,w)
\coloneqq
\sum_{A\subseteq E(H)} q^{\kappa(A)}(w-1)^{|A|}.
\]
\end{definition}

The Potts model for hypergraphs has been studied to some extent, mainly under the labels \emph{many-body interactions} or \emph{higher-order interactions}.
Grimmett \cite{grimmett1994potts} describes the Potts model with many-body interactions in terms of a probability measure on the space of configurations, where the partition function is the normalizing factor. 
Here, the space of configurations is the space of all color assignments $V \to [q]$, for a positive integer $q$, and the weight of a configuration depends on a hyperedge interaction parameter $w$.
The resulting partition function (i.e.\ the normalizing factor), after applying a change of variables (up to an overall multiplicative factor, which does not affect the probability measure), is

\[
\sum_{\phi: V \to [q]}  \prod_{\substack{e \in E(H) \\ \phi(a)=\phi(b) \\\forall a,b \in e}} w .
\]
When $q$ is a positive integer, this is the natural spin representation. 
To extend the definition to arbitrary $q$, we pass to the subset expansion and take that as the definition. 
In this sense, the partition function of Definition~\ref{def:HyperPottsGrim} is implicit in Grimmett's formulation.

Evaluating $\ZGr$ for $w=v+1$  gives $\sum_{A \subseteq E} q^{\kappa(A)}v^{|A|}$, which is a hypergraph extension of the random cluster model.
The random cluster model with many-body interactions in \cite{grimmett1994potts} is being described using the same partition function as the partition function of the many-body interaction Potts model.

In Grimmett's version, each monochromatic edge has a contribution of $w$, independent of the degree of the edge. 
Motivated by the definition of the Tutte polynomial in Definition \ref{def: HyperTutte}, we incorporate the degrees of the hyperedge in the partition function of the Potts model and introduce the following hypergraph extension of the Potts partition function.

\begin{definition}[Degree-dependent hypergraph Potts partition function]\label{def:HyperPotts}
Let $H=(V,E)$ be a hypergraph and let $q,w \in \mathbb{C}$ be parameters. We define
\[
\ZP(H;q,w)
\;:=\;
\sum_{A\subseteq E} q^{\kappa(A)}\prod_{e\in A}\bigl(w^{d(e)-1}-1\bigr)
\]
to be the \emph{degree dependent hypergraph Potts partition function}.
For $q\in\mathbb{N}$ this polynomial has the equivalent spin-sum representation
\[
\ZP(H;q,w)
\;=\;
\sum_{\phi: V \to [q]} \prod_{\substack{e \in E(H) \\ \phi(a)=\phi(b) \\\forall a,b \in e}} w^{d(e) -1} .
\]
\end{definition}

Although we are not aware of a similar formulation of the Potts partition function in the literature, the idea of incorporating the  degrees in the `interaction energy' is not  novel. 
The Hamiltonians of the Ising model on hypergraphs, as defined in \cite{robiglio2025higher, son2024phase},  contribute a factor of $J_{|e|}$ to the total energy for each monochromatic edge $e$, where $|e|$ is the cardinality of $e$. 
Note however that, as we allow multiplicities in the edges, we distinguish the degree $d(e)$ of an edge $e$ from its `vertex size' $v(e)$.

Another natural hypergraph extension of the random cluster model is obtained by keeping the usual cluster-weight $q^{\kappa(A)}$ while allowing the edge-weight to depend on the degree, and choosing the contribution of each hyperedge $e\in A$ to be $v^{d(e)-1}$. This leads to the following definition.

\begin{definition}[Degree-dependent hypergraph random cluster partition function]\label{def: HyperRC}
For a hypergraph $H$, its degree-dependent random cluster partition function is given by
\[
Z_{\mathrm{RC}}(H;q,v) = \sum_{A \subseteq E} q^{ \kappa(A)} \prod_{e \in A}v^{d(e)-1}.
\]
\end{definition}

Unlike the graph case, $Z_P$ is not obtained from $Z_{\mathrm{RC}}$ anymore by a simple change of variables $v \mapsto w-1$.
In fact, what follows from the next subsection, is that $Z_P$ and $\ZRC$ are fundamentally different in the general hypergraph case.

\begin{remark}[Maximum entropy perspective]
The partition functions $\ZGr$, $\ZP$, and $\ZRC$ are all instances
of \emph{maximum entropy models}. 
In this framework, one seeks the probability distribution over spin
configurations that maximizes entropy subject to constraints on the
expected values of prescribed observables \cite{PhysRev.106.620}. 
In our setting the observables are local functions $\phi^e(\vec{s}\,)$
defined on the hyperedges of a hypergraph. 
This gives the probability distribution and partition function
\[
P(\vec{s}\,) = \frac{1}{Z}\,\prod_{e \in E}
\exp\!\bigl(g_e\,\phi^e(\vec{s}\,)\bigr),
\qquad
Z = \sum_{\vec{s}\,\in\,[q]^V}\,\prod_{e \in E}
\exp\!\bigl(g_e\,\phi^e(\vec{s}\,)\bigr),
\]
where the coupling constants $g_e$ are determined by requiring the
expectations
\[
\langle \phi^e(\vec{s}\,)\rangle_P
=
\sum_{\vec{s}} P(\vec{s}\,)\phi^e(\vec{s}\,)
\]
to match prescribed values. 
Maximum entropy models are widely used in inference, where the
couplings are fitted to empirical data so that the resulting
distribution captures observed correlations between variables
associated to the vertices and hyperedges of a network. 
In recent years, interest in maximum entropy models on hypergraphs has
surged in the context of statistical inference of higher-order moments
in data; see for instance \cite{battiston2021physics} for an overview of
higher-order interactions in statistical physics, and
\cite{spinmod, deClercq2026, de2025bayesian, Moody2026} for work on
maximum entropy models on hypergraphs.

The framework developed in \cite{Moody2026} makes this connection more
precise for models whose local interactions are determined by functions
on the spins of a hyperedge. 
For the Potts-type interactions considered here, the basic local
observable is
\[
\delta_e(\vec{s}\,) =
\begin{cases}
1, & \text{if all spins on $e$ agree,}\\
0, & \text{otherwise.}
\end{cases}
\]
For each of the three models, we may write
$g_e\phi^e(\vec{s}\,)=\lambda_e\delta_e(\vec{s}\,)$ for a suitable
edge-dependent parameter $\lambda_e$. Since $\delta_e(\vec{s}\,)$ only
takes the values $0$ and $1$, we have
\[
\exp\!\bigl(\lambda_e\delta_e(\vec{s}\,)\bigr)
=
1+\bigl(\exp(\lambda_e)-1\bigr)\delta_e(\vec{s}\,).
\]
Hence the partition function has the edge-subset expansion
\[
Z
=
\sum_{A\subseteq E} q^{\kappa(A)}
\prod_{e\in A} v_e
=
q^{v(H)}\widetilde Z(q,\mathbf v;r),
\]
where
\[
\widetilde Z(q,\mathbf v;r)
:=
\sum_{A\subseteq E}q^{-r(A)}\prod_{e\in A}v_e,
\qquad
r(A)=v(H)-\kappa(A),
\]
and $v_e=\exp(\lambda_e)-1$. Thus
$\widetilde Z(q,\mathbf v;r)$ is the multivariate rank generating
function, with one formal edge variable for each hyperedge. 
The partition functions $\ZGr$, $\ZP$, and $\ZRC$ considered in this
paper are obtained from it by univariate specializations of these edge
variables. 
If hyperedges contain repeated vertices, the agreement condition only
depends on the support of the hyperedge; the degree data is retained in
the specialized edge variables $v_e$.

For $\ZGr$ one takes $\lambda_e=\log w$, so that $v_e=w-1$; for $\ZP$
one takes $\lambda_e=(d(e)-1)\log w$, so that
$v_e=w^{d(e)-1}-1$; and for $\ZRC$ one takes
$\lambda_e=\log(1+v^{d(e)-1})$, so that $v_e=v^{d(e)-1}$.
Thus the three models have the same underlying monochromatic-edge
interaction, but encode the hyperedge degrees in different univariate
specializations of the multivariate rank generating function.

From this multivariate viewpoint, the uniform case is precisely the
case in which the degree-dependent edge variables become constant.
If $H$ is $(k+1)$-uniform, then $r(A)=v(H)-\kappa(A)$ is the rank
function of the associated $k$-polymatroid $P_H$, as used in
Section~\ref{sec: Hypergraphs}. 
Moreover, in each of the three specializations above the edge variable
$v_e$ is independent of $e$. 
For a uniform edge variable $u$, Proposition~5.4(2) of
\cite{Moody2026} gives
\[
\widetilde Z(q,u;r)
=
\left(\frac{u^{1/k}}{q}\right)^{r(E)}
T_k\!\left(P_H;1+\frac{q}{u^{1/k}},1+u^{1/k}\right).
\]
This is the rank-generating-function analogue of the specialization
$\THG(H;x,y)=T_k(P_H;x,y)$ for $(k+1)$-uniform hypergraphs, which is
developed in Section~\ref{sec: Hypergraphs}.
In Section~\ref{sec: potts}, and in particular in
Proposition~\ref{prop: RCTutteEquiv} and
Theorem~\ref{thm:incomparability}, we use these observations to compare
$\THG$, $\ZRC$, $\ZP$, and $\ZGr$. We see that $\ZRC$ is equivalent to
$\THG$, while outside the uniform setting the degree-dependent edge
variables separate the three hypergraph partition functions.
\end{remark}

\begin{theorem}[Deletion--contraction for partition functions]\label{thm:dc-ZP}
Let $H=(V,E)$ be a hypergraph and let $e\in E$.
Then the following deletion--contraction recurrences hold:
\begin{align}
\ZP (H;q,w)
&= \ZP (H\setminus e;q,w) + \bigl(w^{\,d(e)-1}-1\bigr)\,\ZP(H/e;q,w),\\
\ZGr(H;q,w)
&= \ZGr(H\setminus e;q,w) + (w-1)\,\ZGr(H/e;q,w),\\
\ZRC(H;q,v)
&= \ZRC(H\setminus e;q,v) + v^{\,d(e)-1}\,\ZRC(H/e;q,v),
\end{align}
with deletion and contraction as in Definition~\ref{def: del/con}.
\end{theorem}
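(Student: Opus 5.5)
All three partition functions have the common form
\[
Z_\omega(H;q)=\sum_{A\subseteq E} q^{\kappa(A)}\prod_{f\in A}\omega(f),
\]
where $\omega$ is an edge weight that depends only on the degree $d(f)$. The three cases are $\omega(f)=w^{d(f)-1}-1$ for $\ZP$, $\omega(f)=w-1$ for $\ZGr$, and $\omega(f)=v^{d(f)-1}$ for $\ZRC$. I would therefore prove a single recurrence,
\[
Z_\omega(H)=Z_\omega(H\setminus e)+\omega(e)\,Z_\omega(H/e),
\]
and read off the three statements by specializing $\omega$. The key point is that contraction in Definition~\ref{def: del/con} preserves the degree of every remaining edge: $d(f')=d(f)$ for all $f\in E\setminus\{e\}$. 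Hence the same rule $\omega$ gives the correct weights on both $H\setminus e$ and $H/e$.

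Following the proof of Theorem~\ref{thm:dc-THG}, I split the subset sum according to whether $e\in A$.

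\emph{Subsets with $e\notin A$.} Since $H\setminus e$ has the same vertex set and $\kappa_{H\setminus e}(A)=\kappa_H(A)$, these terms give exactly $Z_\omega(H\setminus e)$. Unlike the Tutte case, there is no normalizing factor $\kappa(H)$ here, so no power of $(x-1)$ appears.

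\emph{Subsets with $e\in A$.} Write $A=A'\cup\{e\}$ with $A'\subseteq E\setminus\{e\}$, identified with a subset of $E(H/e)$ via $f\mapsto f'$. Then $\prod_{f\in A}\omega(f)=\omega(e)\prod_{f\in A'}\omega(f')$, using degree preservation.

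The main point to check is that $\kappa_{H/e}(A')=\kappa_H(A'\cup\{e\})$. This identity was already used in the proofs of Theorem~\ref{thm:dc-THG} and Proposition~\ref{prop: hyper-poly-bridge}. To justify it, compare $\mathrm{Bip}(H|_{A'\cup e})$ with $\mathrm{Bip}(H/e|_{A'})$. In the first graph, the vertices of $e$ all lie in one component through the vertex $e$. In the second, those vertices are replaced by the single vertex $v_e$, and every incidence of an edge $f\in A'$ with a vertex of $e$ becomes an incidence with $v_e$. The map sending the vertices of $e$ (together with $e$ itself) to $v_e$ therefore induces a bijection of components. Consequently $q^{\kappa(A)}=q^{\kappa_{H/e}(A')}$, and this second sum equals $\omega(e)\,Z_\omega(H/e)$.

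Adding the two parts gives the general recurrence. Substituting the three choices of $\omega$ yields the three displayed identities. For $\ZGr$ the weight is constant, so degree preservation is not even needed in that case.
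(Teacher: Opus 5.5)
Your proposal is correct and follows the same route as the paper. You split the subset expansion according to whether $e\in A$, identify the first part with $H\setminus e$, and biject the second part onto subsets of $E(H/e)$ using degree preservation and $\kappa_{H/e}(A')=\kappa_H(A'\cup\{e\})$. Packaging the three cases into a single degree-dependent weight $\omega$, and justifying the component identity by contracting the connected star at $e$ in the incidence graph, simply makes explicit what the paper leaves to ``the same technique''.
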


\begin{proof}[Proof of Theorem~\ref{thm:dc-ZP}]
We will prove the statement for $\ZP$.
Using the same technique, we can verify the recurrence relations for $\ZRC$ and $\ZGr$.

Start from the subset expansion and split the sum into the cases $e\in A$ and $e \notin A$, giving
\begin{align*}
\ZP(H;q,w)
&=\sum_{\substack{A\subseteq E\\ e\notin A}} q^{\kappa(A)}\!\!\prod_{f\in A}\!\bigl(w^{\,d(f)-1}-1\bigr)
\;+\;\sum_{\substack{A\subseteq E\\ e\in A}} q^{\kappa(A)}\!\!\prod_{f\in A}\!\bigl(w^{\,d(f)-1}-1\bigr).
\end{align*}
The first sum is exactly $\ZP(H\setminus e;q,w)$.

For the second sum, set $A'=A\setminus\{e\}$; this gives a bijection
\[
\{A\subseteq E:\ e\in A\}\ \longleftrightarrow\ \{A'\subseteq E\setminus\{e\}\}.
\]
By the definition of contraction (Definition~\ref{def: del/con}), the degrees of all remaining hyperedges are unchanged, and the number of components behaves as
\[
\kappa\bigl( (H/e)\big|_{A'} \bigr)\;=\;\kappa\bigl( H\big|_{A'\cup\{e\}} \bigr).
\]
Hence, we can rewrite the second sum as
\begin{align*}
\sum_{\substack{A\subseteq E\\ e\in A}} q^{\kappa(A)}\!\!\prod_{f\in A}\!\bigl(w^{\,d(f)-1}-1\bigr)
&=\sum_{A'\subseteq E\setminus\{e\}} q^{\,\kappa((H/e)|_{A'})}
\Bigl(\prod_{f\in A'}\bigl(w^{\,d(f)-1}-1\bigr)\Bigr)\,\bigl(w^{\,d(e)-1}-1\bigr)\\
&=\bigl(w^{\,d(e)-1}-1\bigr)\,\ZP(H/e;q,w).
\end{align*}
Combining the two parts gives the stated recurrence.
\end{proof}

\subsection{Relating the Tutte Polynomial}

In this subsection, we relate the hypergraph Tutte polynomial $\THG$ to the partition functions $\ZGr$, $\ZP$, and $\ZRC$.
We first show that $\THG$ is equivalent to the degree-dependent hypergraph random cluster partition function $\ZRC$.
We then compare $\THG$, $\ZP$, and $\ZGr$ in distinguishing power (Definition~\ref{def: dist power}).

In particular, although $\ZP$ and $\ZRC$ both depend on edge degrees, only $\ZRC$ is equivalent to $\THG$ in our hypergraph setting.
By contrast, $\ZGr$ does not depend on edge degrees, and we show that it is incomparable with both $\THG$ and $\ZP$.
The main result of this subsection is Theorem~\ref{thm:incomparability}, which states that $\THG$, $\ZP$, and $\ZGr$ are pairwise incomparable in distinguishing power.
Using the equivalence of $\THG$ and $\ZRC$, we then deduce the corresponding statement for $\ZRC$, $\ZP$, and $\ZGr$, as well as the non-specialization corollary (Corollary~\ref{cor:NoSpecialization}).

\begin{proposition}\label{prop: RCTutteEquiv}
    Let $H$ be a   hypergraph. Then $T_{\mathrm{HG}}$ is realized as an evaluation of the  random cluster partition function $\ZRC$, and conversely $\ZRC$ is obtained as an evaluation of $T_{\mathrm{HG}}$, as follows:
    \begin{align}\label{eq:TasZ}
               \THG(H;x, y)  = (x-1)^{-\kappa(H)}(y-1)^{-v(H)}\, \ZRC\!\left(H; (x-1)(y-1),\, y-1\right), 
    \end{align}
    and
    \begin{align}\label{eq: ZasT}
               \ZRC(H;q,v)
        =
        q^{\kappa(H)}\,v^{\,v(H)-\kappa(H)}\;
        T_{\mathrm{HG}}\!\left(H;1+\frac{q}{v},\,1+v\right),
    \end{align}
    for $v \neq 0$.

\end{proposition}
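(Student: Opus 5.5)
The plan is a direct manipulation of subset expansions, with no recursion needed. Since $\prod_{e\in A} v^{d(e)-1}=v^{d(A)-|A|}$, we can write
\[
\ZRC(H;q,v)=\sum_{A\subseteq E} q^{\kappa(A)}v^{d(A)-|A|}.
\]
We then compare this termwise with the expansion of $\THG$ in Definition~\ref{def: HyperTutte}.

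For \eqref{eq:TasZ}, substitute $q=(x-1)(y-1)$ and $v=y-1$ into the expansion above. The summand for $A$ becomes
\[
(x-1)^{\kappa(A)}(y-1)^{\kappa(A)+d(A)-|A|}.
\]
Multiplying by $(x-1)^{-\kappa(H)}(y-1)^{-v(H)}$ gives exactly
\[
(x-1)^{\kappa(A)-\kappa(H)}(y-1)^{d(A)-|A|-v(H)+\kappa(A)},
\]
which is the summand of $\THG(H;x,y)$. Summing over $A$ proves the first identity. It should be read as an identity of rational functions, or for $x\neq 1$, $y\neq 1$, because of the negative powers in the prefactor. Since the paper notes that the exponents in $\THG$ are nonnegative, the right-hand side is in fact a polynomial.

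For \eqref{eq: ZasT}, put $x-1=q/v$ and $y-1=v$, which is valid since $v\neq 0$. The summand of $\THG$ becomes
\[
q^{\kappa(A)-\kappa(H)}\,v^{-\kappa(A)+\kappa(H)}\,v^{d(A)-|A|-v(H)+\kappa(A)}
=q^{\kappa(A)-\kappa(H)}\,v^{d(A)-|A|-v(H)+\kappa(H)}.
\]
Multiplying by $q^{\kappa(H)}v^{v(H)-\kappa(H)}$ gives $q^{\kappa(A)}v^{d(A)-|A|}$, and summing over $A$ yields $\ZRC(H;q,v)$.

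Alternatively, one can observe that \eqref{eq: ZasT} is just \eqref{eq:TasZ} inverted under the change of variables $q=(x-1)(y-1)$, $v=y-1$. Either way, there is no real obstacle. The only point that needs care is bookkeeping: rewriting the product $\prod_{e\in A}v^{d(e)-1}$ as $v^{d(A)-|A|}$, and noting where $v\neq0$ (and $x,y\neq1$) is needed to make sense of the negative exponents.
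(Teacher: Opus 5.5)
Your proof is correct and follows essentially the same route as the paper: both rewrite $\prod_{e\in A}v^{d(e)-1}$ as $v^{d(A)-|A|}$ and then match the subset expansions of $\ZRC$ and $\THG$ term by term under $q=(x-1)(y-1)$, $v=y-1$. The paper works with the shifted polynomial $\THG(H;x+1,y+1)$ and obtains the second identity by inverting the first, whereas you check both substitutions directly; this difference is only cosmetic.
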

\begin{proof}
    For Equation~\ref{eq:TasZ}, recall that the hypergraph Tutte polynomial can be rewritten as
    \[
     T_{\mathrm{HG}}(H;x+1,y+1)
    = (xy)^{-\kappa(H)}y^{-v(H) + \kappa(H)}\sum_{A\subseteq E}(xy)^{\,\kappa(A)}\,y^{\,d(A)-|A|}.
    \]
    Since 
    \[
    \ZRC(H;q,v)=\sum_{A\subseteq E} q^{\kappa(A)}\, v^{\,d(A)-|A|},
    \]
    we obtain
      \begin{align*}
     T_{\mathrm{HG}}(H;x+1, y+1)  
     &= (xy)^{-\kappa(H)}y^{-v(H) + \kappa(H)}\, \ZRC(H; xy, y)\\
     &= x^{-\kappa(H)}y^{-v(H)}\, \ZRC(H; xy, y).
      \end{align*}
    Equation~\ref{eq:TasZ}  follows by the change of variables $x\mapsto x-1$ and $y\mapsto y-1$. 
    Equation~\ref{eq: ZasT} then follows by rewriting Equation~\ref{eq:TasZ}.
\end{proof}

We compare hypergraph invariants via their distinguishing power (also called distinctive power), i.e., their ability to distinguish hypergraphs.
This approach appears in earlier work such as \cite{makowsky08}, and is explicitly defined in \cite{makowsky2025}.
\begin{definition}[Distinguishing power]\label{def: dist power}
Let $F_1,F_2$ be hypergraph invariants.

We say that $F_1$ is \emph{less distinguishing} than $F_2$ if whenever $F_2$ does not distinguish two hypergraphs, then neither does $F_1$, i.e.
\[
F_2(H)=F_2(H') \ \Longrightarrow\  F_1(H)=F_1(H') \qquad \text{for all } H,H'.
\]

If $F_1$ is less distinguishing than $F_2$ and $F_2$ is less distinguishing than $F_1$, then $F_1$ and $F_2$ are \emph{equally distinguishing}.
If neither $F_1$ is less distinguishing than $F_2$ nor $F_2$ is less distinguishing than $F_1$, then $F_1$ and $F_2$ are \emph{incomparable in distinguishing power}.
Equivalently, there exist $H_1,H_2$ with $F_1(H_1)=F_1(H_2)$ but $F_2(H_1)\neq F_2(H_2)$, and there exist $H_3,H_4$ with $F_2(H_3)=F_2(H_4)$ but $F_1(H_3)\neq F_1(H_4)$.
\end{definition}

\begin{theorem}\label{thm:incomparability}
The hypergraph polynomials $\THG$, $\ZP$ and $\ZGr$ are pairwise incomparable in distinguishing power, even among hypergraphs with the same global parameters, i.e. the same number of vertices, components, edges and total degree.

\end{theorem}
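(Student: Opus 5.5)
Six explicit pairs of small hypergraphs, one for each ordered pair of invariants $(F_1,F_2)$ from $\{\THG,\ZP,\ZGr\}$, will prove the statement. Each pair $H,H'$ will have the same number of vertices, components, edges and total degree, with $F_1(H)=F_1(H')$ and $F_2(H)\neq F_2(H')$. By Proposition~\ref{prop: RCTutteEquiv}, $\THG$ can be replaced by $\ZRC$ throughout, since the prefactors $q^{\kappa(H)}v^{v(H)-\kappa(H)}$ agree once the global parameters agree. All three invariants then have the common form $\sum_{A} q^{\kappa(A)}\prod_{e\in A} v_e$, with
\[
v_e=v^{d(e)-1},\qquad v_e=w^{d(e)-1}-1,\qquad v_e=w-1 .
\]
Every pair will therefore be compared through the single multivariate data $\{(\kappa(A),\text{multiset of degrees in }A)\}_{A\subseteq E}$, specialised three ways.

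\emph{Separating $\ZGr$ from the degree-dependent invariants.} First I take $H,H'$ with identical incidence "support" structure but with the degrees redistributed among the edges. For instance, let $H$ be a single component on vertices $\{1,2,3\}$ with edges $\{1,1,2\}$ and $\{2,3\}$, and let $H'$ have edges $\{1,2\}$ and $\{2,3,3\}$ (or a non-isomorphic variant if needed). Then $\kappa(A)$ depends only on supports, so $\ZGr$ agrees. The degree multisets also agree, so in fact all three agree, which means this pair is useless. I will instead arrange that the supports differ while the $\kappa$-profile by $|A|$ agrees. An example: two edges sharing a vertex versus two edges sharing a vertex with a different degree assignment. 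More precisely, the pair should have $\kappa(A)$ equal for corresponding $A$, but with the correspondence not preserving degrees. One such pair is $H$: edges $e_1=\{1,1,2\}$, $e_2=\{3,4\}$, together with an isolated arrangement chosen so the singleton contributions differ. Here $\ZGr$ sees only $\sum_A q^{\kappa(A)}(w-1)^{|A|}$, whereas $\ZRC$ sees the monomial $v^{d(e)-1}$, which records which edge merges components. Concretely, I will compare an edge of degree $3$ with $v(e)=2$ placed where it merges components against one with $v(e)=3$. This gives $\ZGr$-equal but $\THG$- and $\ZP$-different pairs.

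\emph{Conversely, pairs where $\ZGr$ distinguishes.} I will use hypergraphs where the support structure differs, for example a degree-$4$ edge on $3$ distinct vertices versus two separate edges. Total degree, edges and components are kept equal, but the degrees are adjusted so that the $\ZRC$ generating data coincide. Since $\ZRC$ records only $q^{\kappa(A)}v^{d(A)-|A|}$, a pair with equal joint distribution of $(\kappa(A),d(A))$ but different distributions of $(\kappa(A),|A|)$ works. Because the global $|E|$ is fixed, the difference must appear in intermediate subsets.

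\emph{$\THG$ versus $\ZP$.} $\ZRC$ depends on $\prod v^{d(e)-1}=v^{d(A)-|A|}$, i.e.\ only on the sum of the degrees in $A$, whereas $\ZP$ depends on $\prod (w^{d(e)-1}-1)$, i.e.\ on the multiset of degrees. For a $\THG$-equal, $\ZP$-different pair, I take two hypergraphs with equal $(\kappa(A),d(A))$ statistics but where some $A$ with degrees $\{2,4\}$ corresponds to degrees $\{3,3\}$. For the reverse direction, the terms $(w^{d-1}-1)$ expand into many monomials, so $\ZP$ can coincide while $\ZRC$ differs. I will search for small examples with three or four edges, where an identity such as
\[
(w^2-1)^2=(w^3-1)(w-1)+\cdots
\]
balances across the subsets.

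\emph{Main obstacle.} Finding the explicit small examples, especially the $\ZP$-equal but $\THG$-different pair, is the main obstacle. I would attempt it by a small computer-style enumeration over hypergraphs with at most $4$ vertices and $3$ edges, comparing expanded polynomials. Verification of each example is then a direct finite computation of the subset expansions.
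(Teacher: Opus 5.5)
\paragraph{Gap: no witnesses.} Your plan has the right shape, and it is the paper's shape: for each ordered pair of invariants, exhibit two hypergraphs with equal global parameters that one invariant identifies and the other separates, and use Proposition~\ref{prop: RCTutteEquiv} to replace $\THG$ by $\ZRC$. But the statement is purely existential, so the witnesses \emph{are} the proof, and you give none.
\begin{itemize}
\item The only fully specified pair is useless. It is in fact isomorphic via $1\leftrightarrow 3$.
\item The later suggestions cannot be checked. For instance, ``$e_1=\{1,1,2\}$, $e_2=\{3,4\}$ together with an isolated arrangement'' does not specify a hypergraph.
\item One direction is deferred to a computer search.
\end{itemize}

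\paragraph{The heuristics are partly wrong.} $\ZRC$ records $v^{d(A)-|A|}$. So it is governed by the joint distribution of $(\kappa(A),\,d(A)-|A|)$, not by ``the sum of the degrees in $A$''. Testing equality of the $(\kappa(A),d(A))$ distribution is therefore the wrong criterion. Also, ``same supports and same degree multiset, so all three agree'' is false in general. For example, $\{a,a,b\},\{c,c\}$ versus $\{a,b\},\{c,c,c\}$ have the same global parameters and equal $\ZGr$, but different $\ZRC$ and $\ZP$. Your first idea (keep the supports, redistribute the degrees) was sound; it failed only because your particular redistribution was an isomorphism.

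\paragraph{Two observations that make the examples easy.}

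First, on a single vertex every hyperedge is always monochromatic. Hence $\ZP=q\,w^{d(E)-|E|}$ and $\ZGr=q\,w^{|E|}$ depend only on global parameters, while $\THG=\prod_{e}\bigl(1+(y-1)^{d(e)-1}\bigr)$ sees the degree multiset. So the paper's $\{a,a\},\{a,a,a,a\}$ versus $\{a,a,a\},\{a,a,a\}$ settles what you call the main obstacle ($\ZP$ equal, $\THG$ different). The same pair simultaneously settles the $\ZGr$-equal, $\THG$-different direction.

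Second, $d(A)-|A|=\sum_{e\in A}(d(e)-1)$. So in $\ZRC$ one degree-$3$ edge weighs the same as two degree-$2$ edges ($v^2$ in both cases), whereas $\ZP$ gives $w^2-1$ versus $(w-1)^2$ and $\ZGr$ gives $w-1$ versus $(w-1)^2$. The paper's pair $\{a,b\},\{a,b\},\{a,b,c\}$ versus $\{a,b\},\{a,c\},\{b,b,c\}$ exploits exactly this. It exchanges which of $\{e_3\}$ and $\{e_1,e_2\}$ is connected, giving equal $\THG$ but different $\ZP$ and $\ZGr$. Its $(\kappa(A),d(A))$ distributions actually differ, so your criterion would have missed it.

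\paragraph{$\ZP$ versus $\ZGr$.} This comparison still needs its own two pairs. The paper takes two-vertex examples:
\begin{itemize}
\item $\{a,a\},\{a,a\},\{a,a,b\}$ versus $\{a,b\},\{a,b\},\{a,a,a\}$: equal $\ZP$, different $\ZGr$.
\item $\{a,a\},\{a,a,a\},\{a,b\}$ versus $\{a,a\},\{a,a\},\{a,a,b\}$: equal $\ZGr$, different $\ZP$.
\end{itemize}
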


\begin{proof}
\smallskip
\noindent\emph{(i) Comparing $\THG$ and $\ZP$.}

Consider the hypergraphs $H_1$ and $H_2$ on the common vertex set
\[
V(H_1)=V(H_2)=\{a\}.
\]
Let
\[
E(H_1)=\{\{a,a\},\,\{a,a,a,a\}\},
\qquad
E(H_2)=\{\{a,a,a\},\,\{a,a,a\}\}.
\]
Then $v(H_1)=v(H_2)=1$, $\kappa(H_1)=\kappa(H_2)=1$, $e(H_1)=e(H_2)=2$, and
$d(H_1)=d(H_2)=6$.
A direct computation gives
\[
\THG(H_1;x,y)= y^4-3y^3+3y^2
\neq
y^4-4y^3+8y^2-8y+4
= \THG(H_2;x,y).
\]
On the other hand,
\[
\ZP(H_1;q,w)=\ZP(H_2;q,w)=q w^4.
\]
So $\THG$ is not less distinguishing than $\ZP$.

Conversely, consider the hypergraphs $H_3$ and $H_4$ on vertex set $\{a,b,c\}$ with
\[
E(H_3)=\bigl\{\{a,b\},\,\{a,b\},\,\{a,b,c\}\bigr\},
\qquad
E(H_4)=\bigl\{\{a,b\},\,\{a,c\},\,\{b,b,c\}\bigr\}.
\]
Again $v(H_3)=v(H_4)=3$, $e(H_3)=e(H_4)=3$, $\kappa(H_3)=\kappa(H_4)=1$, and
$d(H_3)=d(H_4)=7$.
A direct computation yields
\[
\THG(H_3;x,y)=\THG(H_4;x,y)=x^2+xy-x+y^2-y,
\]
whereas
\begin{align*}
\ZP(H_3;q,w)
&= q w^4 + q^2 w^2 - q w^2 + q^3 - q^2,\\
\ZP(H_4;q,w)
&= q w^4 + q^2 w^2 - q w^2 + 2q^2 w - 2q w + q^3 - 3q^2 + 2q,
\end{align*}
so $\ZP(H_3;q,w)\neq \ZP(H_4;q,w)$.
Thus $\ZP$ is also not less distinguishing than $\THG$.

\smallskip
\noindent\emph{(ii) Comparing $\THG$ and $\ZGr$.}

For $H_i$ as above, for $i=1,2,3,4$, we quickly verify that
$\ZGr(H_1;q,w)=\ZGr(H_2;q,w)$, and that $\ZGr(H_3;q,w)\neq \ZGr(H_4;q,w)$,
meaning that neither $\THG$ is less distinguishing than $\ZGr$ nor $\ZGr$ is less distinguishing than $\THG$.

\smallskip
\noindent\emph{(iii) Comparing $\ZP$ and $\ZGr$.}

Next, consider the hypergraphs $H_5$ and $H_6$ on the common vertex set
\[
V(H_5)=V(H_6)=\{a,b\}.
\]
Let
\[
E(H_5)=\bigl\{\{a,a\},\,\{a,a\},\,\{a,a,b\}\bigr\},
\qquad
E(H_6)=\bigl\{\{a,b\},\,\{a,b\},\,\{a,a,a\}\bigr\}.
\]
Then $v(H_5)=v(H_6)=2$, $\kappa(H_5)=\kappa(H_6)=1$, $e(H_5)=e(H_6)=3$, and
$d_2(H_5)=d_2(H_6)=2$ and $d_3(H_5)=d_3(H_6)=1$.
A direct computation gives
\[
\ZP(H_5;q,w)=\ZP(H_6;q,w)= q w^4 + q(q-1) w^2,
\]
whereas
\[
\ZGr(H_5;q,w)= q w^3 + q(q-1) w^2
\neq
q w^3 + q(q-1) w
= \ZGr(H_6;q,w).
\]
So $\ZGr$ is not less distinguishing than $\ZP$.

Conversely, consider the hypergraphs $H_7$ and $H_8$ on the common vertex set
\[
V(H_7)=V(H_8)=\{a,b\},
\]
with
\[
E(H_7)=\bigl\{\{a,a\},\,\{a,a,a\},\,\{a,b\}\bigr\},
\qquad
E(H_8)=\bigl\{\{a,a\},\,\{a,a\},\,\{a,a,b\}\bigr\}.
\]
A direct computation yields
\[
\ZP(H_7;q,w)= q w^4 + q(q-1) w^3
\neq
q w^4 + q(q-1) w^2
= \ZP(H_8;q,w),
\]
while
\[
\ZGr(H_7;q,w)=\ZGr(H_8;q,w)
= q w^3 + q(q-1) w^2.
\]
Thus $\ZP$ is also not less distinguishing than $\ZGr$.
\end{proof}

As $\ZRC$ and $\THG$ are equivalent, it follows that the hypergraph random cluster partition function $\ZRC$, $\ZP$ and $\ZGr$ are all pairwise incomparable in distinguishing power, even among hypergraphs with the same global parameters. As a consequence, we have the following corollary.

\begin{corollary}\label{cor:NoSpecialization}
The hypergraph partition functions $\ZRC$, $\ZP$ and $\ZGr$ do not specialize to each other, even up to multiplication by a factor depending only on global parameters.
\end{corollary}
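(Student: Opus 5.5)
The plan is to argue by contradiction from Theorem~\ref{thm:incomparability} together with Proposition~\ref{prop: RCTutteEquiv}. First, I will make precise what ``specializes, up to multiplication by a factor depending only on global parameters'' means. Say $F_1$ specializes to $F_2$ if there are substitutions $q\mapsto\phi(q',w')$, $w\mapsto\psi(q',w')$ (or $v\mapsto\psi$) and a factor $c(v(H),\kappa(H),e(H),d(H),\dots)$ such that
\[
F_2(H;q',w')=c(\text{global parameters of }H)\,F_1(H;\phi,\psi)
\]
for all hypergraphs $H$. The key observation is the following. Suppose two hypergraphs have the same global parameters and $F_1(H)=F_1(H')$ as polynomials. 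Then the specialized values agree, and the factors $c$ coincide. Hence $F_2(H)=F_2(H')$. So on any family of hypergraphs sharing their global parameters, a specialization $F_1\to F_2$ forces $F_2$ to be less distinguishing than $F_1$, in the sense of Definition~\ref{def: dist power} restricted to that family.

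Second, I transfer the incomparability from $\THG$ to $\ZRC$. By Proposition~\ref{prop: RCTutteEquiv}, $\THG(H)$ and $\ZRC(H)$ determine each other via \eqref{eq:TasZ} and \eqref{eq: ZasT}. The prefactors there depend only on $\kappa(H)$ and $v(H)$, so for hypergraphs with equal global parameters, $\THG(H)=\THG(H')$ holds if and only if $\ZRC(H)=\ZRC(H')$. Consequently the witness pairs in the proof of Theorem~\ref{thm:incomparability} work verbatim for $\ZRC$:
\begin{itemize}
\item $(H_1,H_2)$ and $(H_3,H_4)$ serve $\ZRC$ versus $\ZP$ and $\ZRC$ versus $\ZGr$;
\item $(H_5,H_6)$ and $(H_7,H_8)$ serve $\ZP$ versus $\ZGr$.
\end{itemize}

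Third, I conclude with six short cases, one for each ordered pair. For example, if $\ZRC$ specialized to $\ZP$, then $\ZRC(H_3)=\ZRC(H_4)$ would force $\ZP(H_3)=\ZP(H_4)$, a contradiction. Conversely, $\ZP(H_1)=\ZP(H_2)$ while $\ZRC(H_1)\ne\ZRC(H_2)$ rules out $\ZP\to\ZRC$. The remaining pairs are handled the same way using the listed witnesses.

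The main obstacle is checking that each witness pair really shares every global parameter the factor $c$ might depend on. The pairs $H_5,H_6$ and $H_7,H_8$ are stated with degree-distribution data rather than total degree; in particular I must verify that $H_7,H_8$ have equal $v,\kappa,e$ and total degree. Here $d(H_7)=2+3+2=7$ and $d(H_8)=2+2+3=7$, and both are connected on two vertices with three edges, so this holds. If one allows $c$ to depend on a finer datum such as the full degree sequence, I would double-check that the chosen pairs still agree on it, and otherwise restrict the claim to the parameters listed in Theorem~\ref{thm:incomparability}.

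A secondary subtlety is that specializations might involve rational substitutions, with $v\neq0$ in \eqref{eq: ZasT}. Equal polynomials still give equal rational evaluations wherever the substitution is defined, so the argument is unaffected.
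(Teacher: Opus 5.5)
Your proposal is correct and follows the paper's own route. The paper also deduces the corollary from Theorem~\ref{thm:incomparability}, transferring incomparability from $\THG$ to $\ZRC$ via Proposition~\ref{prop: RCTutteEquiv}, whose prefactors depend only on $v(H)$ and $\kappa(H)$. It then observes that a specialization up to a global factor would make one invariant less distinguishing than the other on the witness pairs. You go one step further by explicitly checking that $H_7,H_8$ share $v$, $\kappa$, $e$ and total degree, a detail the paper leaves implicit.
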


When we restrict ourselves to the class of $(k+1)$-uniform hypergraphs however, both $\ZP$ and $\ZGr$ fall within the framework of Theorem~\ref{thm: univers-unif-hyper}, as is evident from Theorem~\ref{thm:dc-ZP}.
Applying Theorem~\ref{thm: univers-unif-hyper}, we obtain the following corollary.

\begin{corollary}
Let $k\in \mathbb{Z}_{\ge 1}$. For $H \in \mathcal{H}_{k+1}$, and $w^k \neq 1$, the following identity holds:
\[
\ZP(H;q,w) = q^{\kappa(H)}(w^{k}-1)^{\frac{v(H)-\kappa(H)}{k}}\THG\left(H; \frac{q}{(w^{k}-1)^{1/k}} +1 , (w^{k}-1)^{1/k} +1 \right).
\]
\end{corollary}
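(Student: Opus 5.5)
The plan is to reduce the statement to Proposition~\ref{prop: RCTutteEquiv}, which expresses $\ZRC$ through $\THG$. The key observation is that in the uniform setting $\ZP$ is itself a random cluster partition function. If $H$ is $(k+1)$-uniform, then $d(e)-1=k$ for every hyperedge, so
\[
\ZP(H;q,w)=\sum_{A\subseteq E} q^{\kappa(A)}(w^k-1)^{|A|}.
\]
On the other hand,
\[
\ZRC(H;q,v)=\sum_{A\subseteq E} q^{\kappa(A)}v^{k|A|}=\sum_{A\subseteq E} q^{\kappa(A)}(v^k)^{|A|}.
\]
Hence, choosing any $v$ with $v^k=w^k-1$ gives $\ZP(H;q,w)=\ZRC(H;q,v)$. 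We take the formal choice $v=(w^k-1)^{1/k}$, which is nonzero precisely because $w^k\neq 1$. This is what licenses the use of Equation~\eqref{eq: ZasT}, since that identity requires $v\neq 0$.

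Substituting this $v$ into Equation~\eqref{eq: ZasT} yields
\[
\ZP(H;q,w)=q^{\kappa(H)}\,(w^k-1)^{\frac{v(H)-\kappa(H)}{k}}\,\THG\!\left(H;1+\frac{q}{(w^k-1)^{1/k}},\,1+(w^k-1)^{1/k}\right),
\]
which is exactly the claimed identity.

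The only point needing care is the branch of the $k$-th root, which I expect to be the main (mild) obstacle. I would argue that the right-hand side is independent of the branch chosen. Replacing $v$ by $\omega v$ for a $k$-th root of unity $\omega$ leaves the middle expression $\sum_A q^{\kappa(A)}v^{k|A|}$ unchanged. Consequently the equality holds for every consistent choice of root, provided the same $v$ is used in all three places where it appears.

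As an alternative route, one could verify the identity via the deletion--contraction recursions. That is, one would check that the right-hand side satisfies the recurrence for $\ZP$ from Theorem~\ref{thm:dc-ZP}, and then invoke the uniqueness part of Theorem~\ref{thm: univers-unif-hyper}, with $\alpha=q$ and suitable parameters $a,b$. The direct substitution argument above, however, is shorter.
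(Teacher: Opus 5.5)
Your argument is correct, but it follows a different route from the paper's.

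\textbf{The paper's route.} The paper obtains the corollary from the recipe reading of Theorem~\ref{thm: univers-unif-hyper}. It notes that $\ZP$ equals $q^n$ on edgeless hypergraphs with $n$ vertices. By Theorem~\ref{thm:dc-ZP}, it also satisfies $\ZP(H)=\ZP(H\setminus e)+(w^k-1)\ZP(H/e)$ on $(k+1)$-uniform hypergraphs. So $\ZP$ fits the universal recursion with $\alpha=q$, provided $a^{k-\Delta}x^{\Delta}=1$ and $b^{k-m}y^{m}=w^k-1$ hold for all edge types. These conditions force $a=x$, $b=y$, $a^k=1$ and $b^k=w^k-1$. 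Choosing $a=1$ and $b=(w^k-1)^{1/k}$ in \eqref{eq:univ-hyper} gives the formula.

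\textbf{Your route.} You observe that uniformity makes every edge weight in $\ZP$ equal to the constant $w^k-1$. Hence $\ZP(H;q,w)=\ZRC(H;q,v)$ for any $v$ with $v^k=w^k-1$. You then read off the result from \eqref{eq: ZasT}, which is only a regrouping of subset expansions.

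\textbf{Comparison.} Your route is more economical and more transparent. It bypasses the universality theorem and the polymatroid machinery behind it (Proposition~\ref{prop: hyper-poly-bridge}, Theorem~\ref{thm: poly-universality}). It shows that the only role of uniformity is to make the degree-dependent edge weight constant. It also settles the root-of-unity ambiguity cleanly: the chain of equalities holds for each $k$-th root $v$ separately, which is the content of the paper's remark that $a,b,x,y$ may be multiplied by a common root of unity. The paper's route, in exchange, shows the recipe theorem at work. It explains conceptually why any invariant on uniform hypergraphs with such a recursion and values $\alpha^n$ on edgeless hypergraphs must be a specialization of $\THG$. Both arguments carry over verbatim to $\ZGr$; in your version, take $v^k=w-1$.
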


\begin{proof}
For an edgeless hypergraph $H$ on $n$ vertices, we have
\[
\ZP(H;q,w)=q^n.
\]
Moreover, $\ZP$ satisfies the deletion--contraction recurrence
\[
\ZP(H;q,w)=\ZP(H\setminus e;q,w) + (w^k-1)\,\ZP(H/e;q,w),
\]
for every hyperedge $e$ in a $(k+1)$-uniform hypergraph.
Hence $\ZP$ satisfies the hypotheses of Theorem~\ref{thm: univers-unif-hyper} with $\alpha=q$, and with $a,b,x,y$ chosen so that
\[
a^{k-\Delta}x^{\Delta} =1, \qquad b^{k-m}y^{m}=w^k-1,
\]
for every $\Delta,m \in \{0,\dots,k\}$.
Therefore $a=x$, $b=y$, $a^k=1$, and $b^k=w^k-1$.
Choosing $a=1$ and fixing a $k$-th root $b=(w^k-1)^{1/k}$ gives the result via \eqref{eq:univ-hyper}.
\end{proof}

Note that multiplying the choices of $a,b,x,y$ by a common $k$-th root of unity $\eta$ yields another valid choice.

Similarly, another application of Theorem~\ref{thm: univers-unif-hyper} gives, for $w \neq 1$,
\[
\ZGr(H;q,w)=q^{\kappa(H)}(w-1)^{\frac{v(H)-\kappa(H)}{k}}\THG\left(H;\frac{q}{(w-1)^{1/k}}+1,\,(w-1)^{1/k}+1\right),
\]
for a fixed choice of a $k$-th root of $w-1$.
Since the verification is entirely analogous, we omit the details.

\section{Comparing $\THG$ and $\mathcal{T}_P$} 
\label{sec: comparison}
In Section~\ref{sec: polymatroids} we have seen that the $k$-polymatroid Tutte polynomial is a specialization of $N(P,\omega;u,v)$ as in Equation~\ref{eq:chavez}, by choosing the right weights.  
The main organizational point is that, for $(k+1)$-uniform hypergraphs $H$, our hypergraph polynomial specializes to the $k$-polymatroid polynomial on the associated hypergraphical polymatroid $P_H$, namely $\THG(H;x,y)=T_k(P_H;x,y)$.  
Since the class of polymatroids arising from $(k+1)$-uniform hypergraphs is closed under deletion and contraction in our framework, this comparison is compatible with minors on both sides.

In this section, we compare $\THG$ with the polymatroid Tutte polynomial $\mathcal{T}_P$ of Bernardi, Kálmán, and Postnikov, as defined in \cite{bernardi2022universal}.  
There are two separate issues.
First, whether $\THG$ and $\mathcal{T}_P$ determine one another.
Second, whether the characteristic polynomial can be obtained as a specialization of $\mathcal{T}_P$.
Since incomparability of two invariants does not by itself rule out the possibility that a specialization of one is determined by the other, we treat these separately.

The polynomials $\THG$ and $\mathcal{T}_P$  differ both in their domain of definition and in the type of deletion--contraction theory they admit.
On one hand, we have $\mathcal{T}_P$, which was originally defined via bases and internal and external activities, and it is defined for all polymatroids. 
More recently, it has been shown that $\mathcal{T}_P$ also admits a polymatroid level recurrence \cite{guan2025recursive}. 
This recurrence has the form of a deletion--contraction formula, but it involves additional polymatroid slices.
Even when the input polymatroid is hypergraphical, these slices need not be hypergraphical, so from the hypergraph point of view the recursion typically leaves the hypergraphical setting \cite{bernardi2022universal}.

On the other hand, the hypergraph polynomial $\THG$ is defined for all hypergraphs and satisfies a deletion--contraction recurrence within the class of all hypergraphs, but it has no polymatroid extension. 
The polynomial $T_k$ sits between these two extremes. 
It admits a full deletion--contraction theory on its natural domain and it preserves the hypergraphical subclass arising from $(k+1)$-uniform hypergraphs, but it is not meant to cover all polymatroids and it does not apply to arbitrary non-uniform hypergraphs.

\subsection{The polymatroid Tutte polynomial $\mathcal{T}_P$}

We now give $\mathcal{T}_P$ via the recursive characterization of \cite{guan2025recursive}, which we will use as our working definition.

\begin{definition}[Recursive definition for $\mathcal{T}_P$]\label{def:TP-rec}
Let $P=(E,r)$ be a polymatroid. Define $\mathcal{T}_P(x,y)$ recursively on $|E|$ as follows.

If $E=\varnothing$, set $\mathcal{T}_P(x,y)=1$. Otherwise, fix $e\in E$ and set
\[
\alpha_e := r(E)-r(E\setminus\{e\}),
\qquad
\beta_e := r(\{e\}).
\]
For each integer $j$ with $\alpha_e\le j\le \beta_e$, let $P_{e,j}$ be the polymatroid on $E\setminus\{e\}$ with rank function
\[
r_{e,j}(A):=\min\{\,r(A),\, r(A\cup\{e\})-j\,\}\qquad(A\subseteq E\setminus\{e\}).
\]
Then $P_{e,\alpha_e}=P\setminus e$ and $P_{e,\beta_e}=P/e$, and we have
\[
\mathcal{T}_P(x,y)=
\begin{cases}
(x+y-1)\,\mathcal{T}_{P\setminus e}(x,y), & \alpha_e=\beta_e,\\[1mm]
x\,\mathcal{T}_{P\setminus e}(x,y)+y\,\mathcal{T}_{P/e}(x,y)+\displaystyle\sum_{j=\alpha_e+1}^{\beta_e-1} \mathcal{T}_{P_{e,j}}(x,y), & \alpha_e<\beta_e.
\end{cases}
\]
\end{definition}

\begin{remark}
It is shown in \cite{guan2025recursive} that this recursion is well defined, in particular independent of the choice of $e$, and agrees with the Bernardi--K\'alm\'an--Postnikov polymatroid Tutte polynomial from \cite{bernardi2022universal}.

In the matroid case, $\beta_e-\alpha_e\le 1$, so the slice sum is empty and the recursion has only deletion and contraction terms.
Moreover, if $P$ is a matroid, $\mathcal{T}_{P}$ does not reduce to the classical matroid Tutte polynomial $T_M$, but is related to it by a simple change of variables:
\[
\mathcal{T}_{P(M)}(x,y)
=
x^{\,n-d}y^{\,d}\,
T_M\!\left(\frac{x+y-1}{y},\,\frac{x+y-1}{x}\right)
\qquad
\text{\cite[Thm.~5.2]{bernardi2022universal}},
\]
where $n$ is the size of the matroid and $d$ the total rank.
\end{remark}

\subsection{Comparison with $\THG$}

We now compare $\THG$ and $\mathcal{T}_P$ on explicit pairs of $4$-uniform hypergraphs.
The polynomial equalities and inequalities below can be verified using the code available at \url{https://github.com/KhallilMaliki/HyperTutteExtensions}.
We write $\mathcal{T}_P(H)$ as shorthand for $\mathcal{T}_P(P_H)$.

\begin{proposition}\label{prop:thg-tp-incomparable}
The invariants $\THG$ and $\mathcal{T}_P$ are incomparable in distinguishing power, even among $4$-uniform hypergraphs with identical global parameters.
\end{proposition}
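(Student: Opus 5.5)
Incomparability requires two explicit pairs of $4$-uniform hypergraphs with the same number of vertices, edges, components and total degree. The first pair $H_1,H_2$ should have $\THG(H_1)=\THG(H_2)$ but $\mathcal{T}_P(H_1)\neq\mathcal{T}_P(H_2)$. The second pair $H_3,H_4$ should have $\mathcal{T}_P(H_3)=\mathcal{T}_P(H_4)$ but $\THG(H_3)\neq \THG(H_4)$. Since each $H_i$ is $4$-uniform, $\THG(H_i;x,y)=T_3(P_{H_i};x,y)$, so both invariants depend only on the associated $3$-polymatroid $P_{H_i}$, with rank function $r(A)=v(H)-\kappa(A)$. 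Both sides can therefore be computed at the polymatroid level: $T_3$ from the subset expansion of Definition~\ref{def: k-polyTutte}, and $\mathcal{T}_P$ from the recursion of Definition~\ref{def:TP-rec}. I would do the computations with the code referenced above and present the resulting polynomials in the proof.

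To find the pairs, I would take small $4$-uniform hypergraphs, say on $4$ to $6$ vertices with $2$ to $4$ hyperedges, allowing repeated vertices inside edges, and group them by their global parameters. For the first pair I would look for two hypergraphs whose rank-generating data agree but whose polymatroids differ in a way the BKP polynomial detects. $T_3$ only records the joint distribution of $(|A|,r(A))$, whereas $\mathcal{T}_P$ is built from activities on the base polytope, so it is sensitive to finer structure, for example how the ranks of individual edges interact with the slices $P_{e,j}$. A natural first attempt is to keep the multiset of values $r(e)=v(e)-1$ and the connectivity pattern fixed while rearranging which edges share vertices. For the second pair I would exploit the fact that $\mathcal{T}_P$ is a polytope invariant, hence determined by the base polytope $\{x:\ x(A)\le r(A),\ x(E)=r(E)\}$ up to lattice-point enumeration. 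It is blind to changes of $r$ on sets that are not tight for that polytope, while $T_3$, being a full subset expansion of $r$, still sees them. So I would look for two hypergraphs whose base polytopes have the same interior and exterior activity statistics but which differ in $r(A)$ for some non-tight $A$. A simple candidate is a pair in which a single hyperedge is glued differently, so that some $\kappa(A)$ changes without changing the bases.

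The proof itself is then routine. I list $H_1,\dots,H_4$, verify their equal global parameters, and display the four polynomials with the two equalities and two inequalities. An inequality needs only one differing coefficient, which can be checked by hand from the subset expansion for $T_3$, or from a single evaluation for $\mathcal{T}_P$. The main obstacle is the search for the second pair, where $\mathcal{T}_P$ agrees but $\THG$ differs. Equality of $\mathcal{T}_P$ is a global condition, and its recursion produces slices that leave the hypergraphical class. I would first try pairs related by a local "twist" that preserves the base polytope up to a lattice-preserving isomorphism, since this forces $\mathcal{T}_P$ to agree by invariance. I would then check that $T_3$ still changes. If no such twist is found, I would fall back on a computer search over all small $4$-uniform hypergraphs with fixed global parameters.
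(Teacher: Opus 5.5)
Your plan has the same shape as the paper's proof: two explicit pairs of $4$-uniform hypergraphs, with the four polynomials checked by computer. But it leaves out the only substantive content, which is the witnesses themselves. The proposition is an existence statement. Asserting that a search would turn up a pair with $\mathcal{T}_P(H_3)=\mathcal{T}_P(H_4)$ but $\THG(H_3)\neq\THG(H_4)$ proves nothing until such a pair is written down and verified. The paper's pairs are as follows.
\begin{itemize}
\item $H_1,H_2$ live on $7$ vertices, with $4$ hyperedges all containing $a$.
\item $H_3,H_4$ live on $8$ vertices, with $5$ hyperedges all containing $a$ and $h$. They differ only in the fifth hyperedge: $\{a,d,g,h\}$ versus $\{a,f,g,h\}$.
\end{itemize}
Both pairs lie outside the window of $4$ to $6$ vertices and $2$ to $4$ hyperedges you propose. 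So it is not even clear that your search would succeed.

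Moreover, the mechanism you suggest for the hard pair cannot work. For submodular $r$, every constraint $x(A)\le r(A)$ is tight on the base polytope $B(r)$. Indeed, the greedy algorithm with the elements of $A$ placed first gives a base $x$ with $x(A)=r(A)$, so $r(A)=\max_{x\in B(r)}x(A)$. Hence $B(r)$, and therefore its set of integral bases, determines $r$ entirely. This has two consequences.
\begin{itemize}
\item There are no non-tight sets on which $r$ could be altered.
\item A regluing that changes some $\kappa(A)$ while keeping the bases fixed is impossible. With the same ground set and the same $v(H)$, equal bases force the same rank function, hence the same $\THG$.
\end{itemize}
Your twist idea fails for a similar reason. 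If the base polytope is preserved up to a coordinate permutation, the polymatroids are isomorphic, so $\THG$ agrees too. Integer translations, the other evident symmetry of $\mathcal{T}_P$, change each singleton rank $r(e)$ by $c_e$. When every hyperedge has rank $3$ on both sides, as for the paper's $H_3,H_4$, such a translation is trivial.

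The equality $\mathcal{T}_P(H_3)=\mathcal{T}_P(H_4)$ in the paper is not forced by any such symmetry; it is found and verified computationally. To turn your proposal into a proof, you must actually exhibit the two pairs and verify the four polynomials.
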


\begin{proof}
Consider first the pair $H_1,H_2$ of $4$-uniform hypergraphs on $V=\{a,b,c,d,e,f,g\}$ with
\[
E(H_1)=\bigl\{\{a,b,c,d\},\{a,b,c,e\},\{a,b,d,f\},\{a,c,f,g\}\bigr\},
\]
\[
E(H_2)=\bigl\{\{a,b,c,d\},\{a,b,c,e\},\{a,b,f,g\},\{a,c,f,g\}\bigr\}.
\]
See Figure~\ref{fig:h1h2} for the associated bipartite graphs. 

A computation gives
\[
\THG(H_1)=\THG(H_2),
\qquad
\mathcal{T}_P(H_1)\neq \mathcal{T}_P(H_2).
\]
The differing coefficients of $\mathcal{T}_P(H_1)$ and $\mathcal{T}_P(H_2)$ are listed in Table~\ref{tab:pair1-tp}.
Hence $\THG$ does not determine $\mathcal{T}_P$.

Now consider the pair $H_3,H_4$ of $4$-uniform hypergraphs on $V=\{a,b,c,d,e,f,g,h\}$ with
\[
E(H_3)=\bigl\{\{a,b,c,h\},\{a,b,d,h\},\{a,b,e,h\},\{a,c,f,h\},\{a,d,g,h\}\bigr\},
\]
\[
E(H_4)=\bigl\{\{a,b,c,h\},\{a,b,d,h\},\{a,b,e,h\},\{a,c,f,h\},\{a,f,g,h\}\bigr\}.
\]
See Figure~\ref{fig:h3h4} for the associated bipartite graphs.

A computation gives
\[
\mathcal{T}_P(H_3)=\mathcal{T}_P(H_4),
\qquad
\THG(H_3)\neq \THG(H_4).
\]
The differing coefficients of $\THG(H_3)$ and $\THG(H_4)$ are listed in Table~\ref{tab:pair2-thg}.
Hence $\mathcal{T}_P$ does not determine $\THG$.

Therefore $\THG$ and $\mathcal{T}_P$ are incomparable in distinguishing power.
\end{proof}

For later use, we record the full $\THG$ polynomials for the second pair:
\[
\begin{aligned}
\THG(H_3;x,y)={}&x^7-7x^6+21x^5-30x^4+5x^3y^2-10x^3y+20x^3 \\
&+5x^2y^4-20x^2y^3+15x^2y^2+15x^2y-6x^2 \\
&+3xy^6-18xy^5+35xy^4-16xy^3-12xy^2-6xy+x \\
&+y^8-8y^7+25y^6-36y^5+20y^4+y^2+y
\end{aligned}
\]
\[
\begin{aligned}
\THG(H_4;x,y)={}&x^7-7x^6+21x^5-30x^4+5x^3y^2-10x^3y+20x^3 \\
&+4x^2y^4-16x^2y^3+9x^2y^2+19x^2y-7x^2 \\
&+3xy^6-18xy^5+37xy^4-22xy^3-6xy^2-8xy+x \\
&+y^8-8y^7+25y^6-36y^5+19y^4+2y^3+y
\end{aligned}
\]

\begin{table}
\centering
\tbl{\caption{Monomials whose coefficients differ in $\mathcal{T}_P(H_1)$ and $\mathcal{T}_P(H_2)$.}\label{tab:pair1-tp}}
{\begin{tabular}{@{}lcc@{}}
\hline
Monomial & $\mathcal{T}_P(H_1)$ & $\mathcal{T}_P(H_2)$ \\
\hline
$xy$  & $1$  & $0$  \\
$x$   & $-3$ & $-4$ \\
$y^2$ & $0$  & $-1$ \\
$1$   & $-1$ & $0$  \\
\hline
\end{tabular}}
{}
\end{table}

\begin{table}
\centering
\tbl{\caption{Monomials whose coefficients differ in $\THG(H_3)$ and $\THG(H_4)$.\label{tab:pair2-thg}}}
{\begin{tabular}{@{}lcc@{}}
\hline
Monomial & $\THG(H_3)$ & $\THG(H_4)$ \\
\hline
$x^2y^4$ & $5$   & $4$   \\
$x^2y^3$ & $-20$ & $-16$ \\
$x^2y^2$ & $15$  & $9$   \\
$x^2y$   & $15$  & $19$  \\
$x^2$    & $-6$  & $-7$  \\
$xy^4$   & $35$  & $37$  \\
$xy^3$   & $-16$ & $-22$ \\
$xy^2$   & $-12$ & $-6$  \\
$xy$     & $-6$  & $-8$  \\
$y^4$    & $20$  & $19$  \\
$y^3$    & $0$   & $2$   \\
$y^2$    & $1$   & $0$   \\
\hline
\end{tabular}}
{}
\end{table}
\begin{corollary}\label{cor:char-not-specialization}
The characteristic polynomial is not, in general, a specialization of $\mathcal{T}_P$.
\end{corollary}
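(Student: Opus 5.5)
The plan is to use the pair $H_3,H_4$ from Proposition~\ref{prop:thg-tp-incomparable}, which satisfies $\mathcal{T}_P(H_3)=\mathcal{T}_P(H_4)$. The aim is to show that the associated $3$-polymatroids $P_{H_3}$ and $P_{H_4}$ nevertheless have different characteristic polynomials. Any specialization of $\mathcal{T}_P$ is a function of $\mathcal{T}_P$, possibly multiplied by a prefactor depending only on global parameters (groundset size and total rank, which agree here: $|E|=5$, and both hypergraphs are connected on $8$ vertices, so $r(E)=7$). Such a specialization must therefore take equal values on $P_{H_3}$ and $P_{H_4}$. Exhibiting $\chi(P_{H_3};x)\neq\chi(P_{H_4};x)$ then rules out $\chi$ as a specialization.

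To compare the characteristic polynomials I would not compute them from scratch. Instead I would use Proposition~\ref{prop: char-tutte} with $k=3$ (odd), together with the identity $\THG(H;x,y)=T_3(P_H;x,y)$ for $4$-uniform $H$. This gives
\[
\chi(P_{H_i};-x)=(-1)^{r(P_{H_i})}\,\THG(H_i;x+1,0),\qquad i=3,4.
\]
Since $r(P_{H_3})=r(P_{H_4})=7$, it suffices to show that $\THG(H_3;x+1,0)\neq \THG(H_4;x+1,0)$, or equivalently $\THG(H_3;x,0)\neq\THG(H_4;x,0)$ as polynomials in $x$.

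Setting $y=0$ in the recorded expansions, only the $y$-free monomials survive. All $x^j$ coefficients with $j\ge 3$ agree, while the $x^2$ coefficients are $-6$ and $-7$ respectively (see Table~\ref{tab:pair2-thg}). The $x$ and constant terms agree. Hence
\[
\THG(H_3;x,0)-\THG(H_4;x,0)=x^2\neq 0,
\]
so the characteristic polynomials differ.

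The main obstacle is the first step: making precise what "specialization" means, so that equal $\mathcal{T}_P$ forces equal values. I would phrase it as any map $\mathcal{T}_P\mapsto \phi(|E|,r(E))\cdot\mathcal{T}_P(f(x),g(x))$. Because the prefactor depends only on the shared global parameters, the conclusion follows immediately. I would also double-check that the $y=0$ evaluation does not accidentally cancel, and the explicit $x^2$ difference settles this.
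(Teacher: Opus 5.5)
Your proposal is correct and follows the paper's proof. It uses the same pair $H_3,H_4$ with $\mathcal{T}_P(H_3)=\mathcal{T}_P(H_4)$, and Proposition~\ref{prop: char-tutte} with $k=3$ and $r=7$ to turn a difference in $\THG(\,\cdot\,;x,0)$ into a difference of characteristic polynomials. Your shortcut $\THG(H_3;x,0)-\THG(H_4;x,0)=x^2$ matches the paper's explicit values $\chi(H_4;x)-\chi(H_3;x)=(x-1)^2$, and your explicit statement of what ``specialization'' means (a prefactor depending only on $|E|$ and $r(E)$, which agree here) makes precise a step the paper leaves implicit.
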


\begin{proof}
For the hypergraphs $H_3$ and $H_4$ above, we have
\[
\mathcal{T}_P(H_3)=\mathcal{T}_P(H_4).
\]
Since $H_3$ and $H_4$ are $4$-uniform, we have $k=3$, and by Proposition~\ref{prop: char-tutte},
\[
\chi(H;x)=(-1)^{r(P_H)}\THG(H;1-x,0).
\]
Computing $\chi$ from the displayed formulas for $\THG(H_3;x,y)$ and $\THG(H_4;x,y)$, we obtain
\[
\chi(H_3;x)=x^7-5x^4+5x^3-x,
\qquad
\chi(H_4;x)=x^7-5x^4+5x^3+x^2-3x+1.
\]
Hence
\[
\chi(H_3;x)\neq \chi(H_4;x).
\]
Therefore the characteristic polynomial is not determined by $\mathcal{T}_P$.
In particular, it cannot be obtained as a specialization of $\mathcal{T}_P$.
\end{proof}

Since, in the setting of hypergraphs viewed through their associated polymatroids, the chromatic polynomial considered in \cite{bergehyper} differs from the characteristic polynomial only by a simple factor (see \cite{helgason1972aspects, whittle1992geometric}), it is not determined by $\mathcal{T}_P$ either.
This gives a negative answer to the question raised by Bernardi, Kálmán, and Postnikov in Section~18.3 of \cite{bernardi2022universal}.

\begin{figure}
\centering
\begin{tikzpicture}[
    x=1cm,y=1cm,
    scale=0.9,
    transform shape,
    vtx/.style={circle, draw, thick, minimum size=7mm, inner sep=0pt, font=\small},
    hed/.style={rounded rectangle, draw, thick, minimum width=6mm, minimum height=5mm, inner sep=1pt, font=\small},
    lab/.style={font=\small},
    title/.style={font=\normalsize\bfseries}
]

\node[title] at (1.55,5.3) {$H_1$};

\node[lab] at (-0.2,4.6) {vertices};
\node[lab] at (3.3,4.6) {edges};

\node[vtx] (a1) at (0,3.6) {$a$};
\node[vtx] (b1) at (0,2.4) {$b$};
\node[vtx] (c1) at (0,1.2) {$c$};
\node[vtx] (d1) at (0,0.0) {$d$};
\node[vtx] (e1) at (0,-1.2) {$e$};
\node[vtx] (f1) at (0,-2.4) {$f$};
\node[vtx] (g1) at (0,-3.6) {$g$};

\node[hed] (h11) at (3.1, 2.7) {$1$};
\node[hed] (h12) at (3.1, 0.9) {$2$};
\node[hed] (h13) at (3.1,-0.9) {$3$};
\node[hed] (h14) at (3.1,-2.7) {$4$};

\draw[thick]               (a1) -- (h11);
\draw[thick]               (b1) -- (h11);
\draw[thick]               (c1) -- (h11);
\draw[thick]               (d1) -- (h11);

\draw[thick,red]           (a1) -- (h12);
\draw[thick,red]           (b1) -- (h12);
\draw[thick,red]           (c1) -- (h12);
\draw[thick,red]           (e1) -- (h12);

\draw[thick,blue]          (a1) -- (h13);
\draw[thick,blue]          (b1) -- (h13);
\draw[thick,blue]          (d1) -- (h13);
\draw[thick,blue]          (f1) -- (h13);

\draw[thick,green!50!black](a1) -- (h14);
\draw[thick,green!50!black](c1) -- (h14);
\draw[thick,green!50!black](f1) -- (h14);
\draw[thick,green!50!black](g1) -- (h14);

\node[title] at (9.55,5.3) {$H_2$};

\node[lab] at (7.8,4.6) {vertices};
\node[lab] at (11.3,4.6) {edges};

\node[vtx] (a2) at (8,3.6) {$a$};
\node[vtx] (b2) at (8,2.4) {$b$};
\node[vtx] (c2) at (8,1.2) {$c$};
\node[vtx] (d2) at (8,0.0) {$d$};
\node[vtx] (e2) at (8,-1.2) {$e$};
\node[vtx] (f2) at (8,-2.4) {$f$};
\node[vtx] (g2) at (8,-3.6) {$g$};

\node[hed] (h21) at (11.1, 2.7) {$1$};
\node[hed] (h22) at (11.1, 0.9) {$2$};
\node[hed] (h23) at (11.1,-0.9) {$3$};
\node[hed] (h24) at (11.1,-2.7) {$4$};

\draw[thick]               (a2) -- (h21);
\draw[thick]               (b2) -- (h21);
\draw[thick]               (c2) -- (h21);
\draw[thick]               (d2) -- (h21);

\draw[thick,red]           (a2) -- (h22);
\draw[thick,red]           (b2) -- (h22);
\draw[thick,red]           (c2) -- (h22);
\draw[thick,red]           (e2) -- (h22);

\draw[thick,blue]          (a2) -- (h23);
\draw[thick,blue]          (b2) -- (h23);
\draw[thick,blue]          (f2) -- (h23);
\draw[thick,blue]          (g2) -- (h23);

\draw[thick,green!50!black](a2) -- (h24);
\draw[thick,green!50!black](c2) -- (h24);
\draw[thick,green!50!black](f2) -- (h24);
\draw[thick,green!50!black](g2) -- (h24);

\end{tikzpicture}
\caption{Associated bipartite graphs of $H_1$ and $H_2$.}
\label{fig:h1h2}
\end{figure}

\begin{figure}
\centering
\begin{tikzpicture}[
    x=1cm,y=1cm,
    scale=0.9,
    transform shape,
    vtx/.style={circle, draw, thick, minimum size=7mm, inner sep=0pt, font=\small},
    hed/.style={rounded rectangle, draw, thick, minimum width=6mm, minimum height=5mm, inner sep=1pt, font=\small},
    lab/.style={font=\small},
    title/.style={font=\normalsize\bfseries}
]

\node[title] at (1.55,5.8) {$H_3$};

\node[lab] at (-0.2,5.1) {vertices};
\node[lab] at (3.3,5.1) {edges};

\node[vtx] (a3) at (0,4.2) {$a$};
\node[vtx] (b3) at (0,3.0) {$b$};
\node[vtx] (c3) at (0,1.8) {$c$};
\node[vtx] (d3) at (0,0.6) {$d$};
\node[vtx] (e3) at (0,-0.6) {$e$};
\node[vtx] (f3) at (0,-1.8) {$f$};
\node[vtx] (g3) at (0,-3.0) {$g$};
\node[vtx] (h3) at (0,-4.2) {$h$};

\node[hed] (h31) at (3.1, 3.2) {$1$};
\node[hed] (h32) at (3.1, 1.6) {$2$};
\node[hed] (h33) at (3.1, 0.0) {$3$};
\node[hed] (h34) at (3.1,-1.6) {$4$};
\node[hed] (h35) at (3.1,-3.2) {$5$};

\draw[thick]                 (a3) -- (h31);
\draw[thick]                 (b3) -- (h31);
\draw[thick]                 (c3) -- (h31);
\draw[thick]                 (h3) -- (h31);

\draw[thick,red]             (a3) -- (h32);
\draw[thick,red]             (b3) -- (h32);
\draw[thick,red]             (d3) -- (h32);
\draw[thick,red]             (h3) -- (h32);

\draw[thick,blue]            (a3) -- (h33);
\draw[thick,blue]            (b3) -- (h33);
\draw[thick,blue]            (e3) -- (h33);
\draw[thick,blue]            (h3) -- (h33);

\draw[thick,green!50!black]  (a3) -- (h34);
\draw[thick,green!50!black]  (c3) -- (h34);
\draw[thick,green!50!black]  (f3) -- (h34);
\draw[thick,green!50!black]  (h3) -- (h34);

\draw[thick,orange]          (a3) -- (h35);
\draw[thick,orange]          (d3) -- (h35);
\draw[thick,orange]          (g3) -- (h35);
\draw[thick,orange]          (h3) -- (h35);

\node[title] at (9.55,5.8) {$H_4$};

\node[lab] at (7.8,5.1) {vertices};
\node[lab] at (11.3,5.1) {edges};

\node[vtx] (a4) at (8,4.2) {$a$};
\node[vtx] (b4) at (8,3.0) {$b$};
\node[vtx] (c4) at (8,1.8) {$c$};
\node[vtx] (d4) at (8,0.6) {$d$};
\node[vtx] (e4) at (8,-0.6) {$e$};
\node[vtx] (f4) at (8,-1.8) {$f$};
\node[vtx] (g4) at (8,-3.0) {$g$};
\node[vtx] (h4) at (8,-4.2) {$h$};

\node[hed] (h41) at (11.1, 3.2) {$1$};
\node[hed] (h42) at (11.1, 1.6) {$2$};
\node[hed] (h43) at (11.1, 0.0) {$3$};
\node[hed] (h44) at (11.1,-1.6) {$4$};
\node[hed] (h45) at (11.1,-3.2) {$5$};

\draw[thick]                 (a4) -- (h41);
\draw[thick]                 (b4) -- (h41);
\draw[thick]                 (c4) -- (h41);
\draw[thick]                 (h4) -- (h41);

\draw[thick,red]             (a4) -- (h42);
\draw[thick,red]             (b4) -- (h42);
\draw[thick,red]             (d4) -- (h42);
\draw[thick,red]             (h4) -- (h42);

\draw[thick,blue]            (a4) -- (h43);
\draw[thick,blue]            (b4) -- (h43);
\draw[thick,blue]            (e4) -- (h43);
\draw[thick,blue]            (h4) -- (h43);

\draw[thick,green!50!black]  (a4) -- (h44);
\draw[thick,green!50!black]  (c4) -- (h44);
\draw[thick,green!50!black]  (f4) -- (h44);
\draw[thick,green!50!black]  (h4) -- (h44);

\draw[thick,orange]          (a4) -- (h45);
\draw[thick,orange]          (f4) -- (h45);
\draw[thick,orange]          (g4) -- (h45);
\draw[thick,orange]          (h4) -- (h45);

\end{tikzpicture}
\caption{Associated bipartite graphs of $H_3$ and $H_4$.}
\label{fig:h3h4}
\end{figure}

\section{Concluding Remarks} \label{sec: conclusion}
In this paper, we introduced the hypergraph Tutte polynomial $\THG$ and the $k$-polymatroid Tutte polynomial $T_k$, and established several Tutte type properties for these invariants.
In particular, $\THG$ provides a deletion--contraction theory on the class of all hypergraphs, while $T_k$ yields a corresponding Tutte polynomial theory for $k$-polymatroids.
For $(k+1)$-uniform hypergraphs, these two viewpoints meet through the associated polymatroid construction, in the sense that $\THG(H;x,y)=T_k(P_H;x,y)$, which lets us transport the universality theorem and convolution identities for $T_k$ back to the hypergraph setting.

We also placed $\THG$ in the broader hypergraph partition function framework.
In particular, we showed that $\THG$ is equivalent to the degree-dependent random cluster partition function $\ZRC$, thereby placing $\THG$ naturally in the hypergraph Potts/random cluster framework.
At the same time, our comparison results show that, outside the uniform setting, the hypergraph partition functions $\ZRC$, $\ZP$, and $Z_{\mathrm{Grimmett}}$ are genuinely different.

We also compared $\THG$ with the polymatroid Tutte polynomial $\mathcal{T}_P$.
More specifically, we showed that $\THG$ and the polymatroid Tutte polynomial $\mathcal{T}_P$ are incomparable in distinguishing power, even on uniform examples.
In particular, together with our identification of the characteristic polynomial with $T_k$ in the uniform setting (Proposition~\ref{prop: char-tutte}), this gives a negative answer to the question raised by Bernardi, Kálmán, and Postnikov in \cite{bernardi2022universal} of whether the characteristic polynomial arises as a specialization of $\mathcal{T}_P$.
Since, in this setting, the chromatic polynomial differs from the characteristic polynomial only by a simple factor, the same conclusion applies to the chromatic polynomial as well.

Several natural questions remain open.
A first question is whether $\THG$ admits an activity expansion, or more generally a direct combinatorial interpretation of its coefficients in terms of suitable hypergraph structures, at least in the uniform case, in analogy with the existing activity based expansions of the polymatroid Tutte polynomial $\mathcal{T}_P$ in \cite{bernardi2022universal, kalman2013version}.
In particular, unlike for $\mathcal{T}_P$, whose coefficients count directly interpretable objects in terms of basis elements, spanning trees, and activities, there is not yet a comparable interpretation for the coefficients of $\THG$.
As the example of the surface Tutte polynomial illustrates, such an expansion may already be quite subtle \cite{thompson2026quasi}.

Another direction is to extend the present theory to hypermaps.
Our definition of $\THG$ was partly inspired by the coarse Tutte polynomial for hypermaps introduced in \cite{ellis2024coarse}, and it is natural to ask whether the universality theory developed here can also be extended to the hypermap setting in the uniform case.

A further direction is to study special evaluations of $\THG$ and their computational complexity.
Since, for planar hypergraphs, $\THG$ agrees with the coarse hypermap Tutte polynomial from \cite{ellis2024coarse}, we can translate complexity questions from that setting into ours.
In particular, we can ask about the complexity of the following decision problem: given a planar hypergraph $H$, determine whether $\THG(H;1,1)=0$ or $\THG(H;1,1)>0$.

Finally, it would be worthwhile to understand more precisely the extent to which the uniform case is exceptional.
In the present paper, the uniform setting is exactly where the hypergraph and polymatroid theories align cleanly, and where the various partition functions become equivalent after a suitable change of variables.
It would be interesting to determine whether there are broader classes of non-uniform hypergraphs for which some form of universality theorem, convolution identity, or equivalence with other partition functions still survives.

\vspace{-2pt}

\section*{Acknowledgements}
We thank Guus Regts and Clelia Mulatier for their valuable input, insights, and constructive conversations.

\vspace{-2pt}

\bibliographystyle{plain}
\bibliography{references_axv}
	
\label{lastpage}

\end{document}